\renewcommand{\>}{\rangle}
\renewcommand{\l}{\lambda}
\newcommand{\rR}{\mathrm{R}}
\newcommand{\T}{\Theta}
\newcommand{\oq}{\ {\raise 7pt\hbox{${\scriptstyle\circ}$}}
\kern -7pt{%\lower 2pt
\hbox{$Q$}}}
\newcommand{\R}{ \mathbb R}
\newcommand{\C}{ \mathbb C}
\newcommand{\N}{ \mathbb N}
\newcommand{\rbar}{{\overline{\mathbb R}}}
\renewcommand{\T}{\mathbb T}
\newcommand {\bk}{\mathbf k}
\newcommand {\bm}{\mathbf m}
\newcommand {\bl}{\mathbf l}
\newcommand {\bze}{\mathbf 0}
\newcommand {\br}{\mathbf r}
\newcommand {\bn}{\mathbf n}
\DeclareMathOperator{\Span}{{span}}
\newcommand{\1}
\DeclareMathOperator {\dist} {{dist}}
\DeclareMathOperator {\diam} {{diam}}
\DeclareMathOperator{\supp}{{supp}}
\newtheorem{thm}{Theorem}[section]
\newtheorem{cor}[thm]{Corollary}
\newtheorem{cla}[thm]{Claim}
\newtheorem{lem}[thm]{Lemma}
\newtheorem{prop}[thm]{Proposition}
\theoremstyle{definition}
\newtheorem{defn}[thm]{Definition}%[section]
\newtheorem{rem}[thm]{Remark}
\numberwithin{equation}{section}
\newcommand{\bee}{\begin{equation}}
\newcommand{\ene}{\end{equation}}
\newcommand{\bees}{\begin{equation*}}
\newcommand{\enes}{\end{equation*}}
\newcommand{\bes}{\begin{split}}
\newcommand{\ens}{\end{split}}
\newcommand{\bet}{\begin{thm}}
\newcommand{\ent}{\end{thm}}
\newcommand{\bel}{\begin{lem}}
\newcommand{\enl}{\end{lem}}
\newcommand{\bec}{\begin{cor}}
\newcommand{\enc}{\end{cor}}
\newcommand{\becl}{\begin{cla}}
\newcommand{\encl}{\end{cla}}
\newcommand{\bep}{\begin{proof}}
\newcommand{\enp}{\end{proof}}
\newcommand{\ber}{\begin{rem}}
\newcommand{\enr}{\end{rem}}
\newcommand{\ep}{\varepsilon}
\newcommand{\Z}{\mathbb Z}
\renewcommand{\l}{\left}
\renewcommand{\r}{\right}
\def\square{\RIfM@\bgroup\else$\bgroup\aftergroup$\fi
  \vcenter{\hrule\hbox{\vrule\@height.6em\kern.6em\vrule}\hrule}\egroup}
\def\level{\mathtt{level}}
\def\magn{\mathtt{magn}}
\newcommand{\one}{\mathbf{1}}
\begin{document}

\title[Perturbative localization for monotone potentials]
{Perturbative diagonalization and spectral gaps of quasiperiodic operators on $\ell^2(\Z^d)$ with monotone potentials}
\author[I. Kachkovskiy]
{Ilya Kachkovskiy}
\address{Department of Mathematics\\ Michigan State University\\
Wells Hall, 619 Red Cedar Rd\\ East Lansing, MI\\ 48824\\ USA}
\email{ikachkov@msu.edu}
\author[L. Parnovski]
{Leonid Parnovski}
\address{Department of Mathematics\\ University College London\\
Gower Street\\ London\\ WC1E 6BT\\ UK}
\email{l.parnovski@ucl.ac.uk}
\author[R. Shterenberg]
{Roman Shterenberg}
\address{Department of Mathematics\\ University of Alabama, Birminghan\\
Campbell Hall\\1300 University Blvd\\ Birmingham, AL\\ 35294\\USA }
\email{shterenb@math.uab.edu}

%\author[L. Parnovski]
%{Leonid Parnovski}
%\address{Department of Mathematics\\ University College London\\
%Gower Street\\ London\\ WC1E 6BT\\ UK}
%\email{Leonid@math.ucl.ac.uk}

%\keywords{Periodic operators}
%\subjclass[2000]{Primary 35P20, 47G30, 47A55; Secondary 81Q10}
%\thanks{This work is supported by the Royal Society}
%\copyrightinfo{2002}{Alexander V. Sobolev}

\date{\today}

%\begin{abstract}
%\input polyabstract.tex
%\end{abstract}
%\begin{abstract}
%We consider a periodic pseudodifferential operator $H=(-\Delta)^l+B$ ($l>0$) in $\R^d$ which satisfies the
%following conditions: (i) the symbol of $H$ is smooth in $x$, and (ii) the perturbation
%$B$ has order smaller than $2l$. Under these assumptions, we prove that the spectrum
%of $H$ contains a half-line.

%\noindent {{\bf AMS Subject Classification:} 35P20 (58J40, 58J50, 35J10)}
%\nl

%{{\bf Keywords:} Bethe-Sommerfeld conjecture, periodic problems, pseudo-differential operators,
%spectral gaps}
%\end{abstract}
\begin{abstract}
We obtain a perturbative proof of localization for quasiperiodic operators on $\ell^2(\Z^d)$ with one-dimensional phase space and monotone sampling functions, in the regime of small hopping. The proof is based on an iterative scheme which can be considered as a local (in the energy and the phase) and convergent version of KAM-type diagonalization, whose result is a covariant family of uniformly localized eigenvalues and eigenvectors. We also proof that the spectra of such operators contain infinitely many gaps.
\end{abstract}
\maketitle
\section{Introduction}
\subsection{Statements of the results}Let $\alpha\ge 1$ and $f\colon\R\to  [-\infty,+\infty)$. We will say that $f$ is {\it $\alpha$-H\"older monotone} if $f$ is $1$-periodic, and
$$
f(y)-f(x)\ge (y-x)^{\alpha},\quad \text{for}\quad 0\le x\le y<1.
$$
For $\alpha=1$, we will call such functions {\it Lipschitz monotone}. In this paper, we consider the following quasiperiodic operator family on $\ell^2(\Z^d)$:
\bee
\label{eq_h_def}
(H(x)\psi)(\bn)=\ep (\Delta \psi)(\bn)+f(x+\omega\cdot\bn)\psi(\bn),\quad \bn=(n_1,\ldots,n_d)\in\Z^d.
\ene
Here $\Delta$ is the discrete nearest neighbor Laplace operator on $\Z^d$:
$$
(\Delta \psi)(\bn)=\sum\limits_{\bm\in \Z^d\colon|\bm-\bn|=1}\psi(\bm),\quad |\bn|=|\bn|_1=|n_1|+|n_2|+\ldots+|n_d|.
$$
We will consider the following class of weakly Diophantine frequency vectors for $\rho>0$, $\mu>0$:
\bee
\label{eq_liouville_def}
\Omega_{\rho,\mu}:=\l\{\omega\in [0,1)^d\colon \|\bn\cdot\omega\|:=\dist(\bn\cdot\omega,\Z)\ge e^{-\rho|\bn|^{\frac{1}{1+\mu}}},\bn\neq \bze\r\}.
\ene
Let $\{e_{\bn}\colon\bn\in \Z^d\}$ be the standard basis of $\ell^2(\Z^d)$. For vector-valued functions and operators, we will use the following notation in order to denote their components:
$$
\psi(x;\bn):=\langle\psi(x),e_{\bn}\rangle,\quad H(x;\bm,\bn):=\langle H(x)e_{\bn},e_{\bm}\rangle.
$$
\begin{thm}
\label{th_main}Let $\alpha\ge 1$, $0<\delta<1$, $\rho,\mu>0$. There exists $\ep_0=\ep_0(d,\rho,\mu,\alpha,\delta)>0$ such that for every $\ep\in (0,\ep_0)$, $\omega\in \Omega_{\rho,\mu}$, and $\alpha$-H\"older monotone function $f\colon \R\to [-\infty,+\infty)$ one can find a $1$-periodic function $E\colon \R\to[-\infty,+\infty)$, strictly increasing on $[0,1)$, and a $1$-periodic measurable function $\psi\colon [0,1)\to \ell^2(\Z^d)$ such that
\bee
\label{eq_sinai_eigenfunction}
H(x)\psi(x)=E(x)\psi(x),\quad\forall x\in \R;
\ene
\bee
\|\psi(x)\|_{\ell^2(\Z^d)}=1,\quad |\psi(x,\bze)-1|<\ep^{1-\delta};\quad |\psi(x;\bn)|\le \ep^{(1-\delta)|\bn|}\quad \text{for}\quad |\bn|\neq 0.
\ene
If $f$ is Lipschitz monotone, then $E$ is also Lipschitz monotone.
\end{thm}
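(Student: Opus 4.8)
The plan is to construct the pair $(E,\psi)$ by a convergent, \emph{localized} version of KAM diagonalization that, at each fixed phase $x$, turns the coordinate vector $e_{\bze}$ into a genuine eigenvector of $H(x)$. Write $H_0(x)=H(x)=D_0(x)+\ep\Delta$ with diagonal part $D_0(x)=\diag\big(f(x+\omega\cdot\bn)\big)_{\bn\in\Z^d}$, and produce inductively, for each $x$, unitaries $U_j(x)=e^{A_1(x)}\cdots e^{A_j(x)}$ and operators $H_j(x):=U_j(x)^{*}H(x)U_j(x)=D_j(x)+V_j(x)$ with $D_j$ diagonal and $V_j$ off-diagonal, arranged so that the $\bze$-row and $\bze$-column of $V_j$ become arbitrarily small while the bulk of $V_j$ stays uniformly bounded in suitable weighted norms. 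The step $j\to j+1$ solves the homological equation $[D_j,A_{j+1}]=-\Pi_j V_j$, where $\Pi_j$ is the projection onto the $\bze$-row and $\bze$-column, with the \emph{arrowhead} ansatz: the only nonzero entries of $A_{j+1}$ are $A_{j+1}(\bn,\bze)=V_j(\bn,\bze)/(D_j(\bze,\bze)-D_j(\bn,\bn))$ and $A_{j+1}(\bze,\bn)=-\overline{A_{j+1}(\bn,\bze)}$. Removing only the $\bze$-row/column is exactly what is needed for $e_{\bze}$ to become a sharper approximate eigenvector, and it guarantees that the only denominators ever appearing are $D_j(\bze,\bze)-D_j(\bn,\bn)$, i.e.\ small perturbations of $f(x)-f(x+\omega\cdot\bn)$; the genuinely resonant differences $f(x+\omega\cdot\bm)-f(x+\omega\cdot\bn)$ between two distant sites never enter.

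The mechanism that tames these denominators is the interplay of monotonicity with the Diophantine condition. For $\bn\neq\bze$, $\alpha$-H\"older monotonicity and $1$-periodicity of $f$ give $|f(x)-f(x+\omega\cdot\bn)|\ge\|\omega\cdot\bn\|^{\alpha}\ge e^{-\rho\alpha|\bn|^{1/(1+\mu)}}$, uniformly in $x$ (with the convention that this quantity is $+\infty$, which is even better, at phases where $f$ takes the value $-\infty$), while the hopping needs $|\bn|$ steps to join $\bze$ to $\bn$; tracking matrix elements through the iteration, the amplitude reaching $\bn$ is therefore bounded by $\ep^{|\bn|}$ times a product of inverse gaps at the sites $\bm$ visited along a near-minimal lattice path from $\bze$ to $\bn$. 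The crucial point is that this product is subexponential of controlled size, because resonant sites cannot cluster: if $\|\omega\cdot\bm\|$ and $\|\omega\cdot\bm'\|$ are both exponentially small at a common scale $L$, then so is $\|\omega\cdot(\bm-\bm')\|$, which by the Diophantine condition forces $|\bm-\bm'|\gtrsim L$; organizing the path into dyadic shells, the total logarithmic loss along it is $O(|\bn|)$, a sum over the shells dominated by its top term precisely because the exponent $1/(1+\mu)$ is strictly less than $1$ rather than growing superlinearly. Choosing $\ep_0$ small in terms of $d,\rho,\mu,\alpha,\delta$, this loss is absorbed into the degradation of the decay rate from $\ep$ to $\ep^{1-\delta}$. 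With this input the iteration closes: $\|V_{j+1}^{\bze}\|$ and $\|A_{j+1}\|$ are bounded by a fixed small multiple of $\|V_j^{\bze}\|$, the bulk stays uniformly bounded, $V_j^{\bze}\to 0$ geometrically, and $U_\infty(x):=\prod_j e^{A_j(x)}$ converges in operator norm to a unitary.

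One then sets $\psi(x):=U_\infty(x)e_{\bze}$, automatically of unit norm, and $E(x):=\lim_j D_j(\bze,\bze)(x)$; passing to the limit in $H_j(x)e_{\bze}=D_j(\bze,\bze)(x)\,e_{\bze}+(\text{remainder}\to 0)$ gives $H(x)\psi(x)=E(x)\psi(x)$ for every $x\in\R$. The weighted bounds on $U_\infty$ yield $|\psi(x;\bn)|\le\ep^{(1-\delta)|\bn|}$ for $\bn\neq\bze$ (a slightly larger exponent is proved first and then absorbed into $\ep^{(1-\delta)|\bn|}$ by taking $\ep_0$ smaller) and $\psi(x;\bze)=1+O(\ep^{1-\delta})$. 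Since each $A_j(x)$ and each $D_j(\bze,\bze)(x)$ is an explicit, $1$-periodic, Borel measurable function of $x$ (continuous wherever $f$ is continuous), the limit $E$ is $1$-periodic and $\psi$ is measurable; carrying out the same construction at the lattice-shifted base points, where $T_i H(x)T_i^{-1}=H(x-\omega_i)$ for the coordinate shift $T_i$, gives the covariance of the family. For the monotonicity of $E$, the first-order perturbation formula along the family gives, wherever $f$ is differentiable, $E'(x)=\sum_{\bn}f'(x+\omega\cdot\bn)\,|\psi(x;\bn)|^{2}$. When $\alpha=1$, monotonicity of $f$ gives $f'\ge 1$ a.e., hence $E'\ge\|\psi(x)\|^{2}=1$ a.e., and a comparison/approximation argument upgrades this to $E(y)-E(x)\ge y-x$ for $0\le x\le y<1$, so $E$ is Lipschitz monotone; for general $\alpha$ the same identity gives $E(y)-E(x)\ge\big(1-O(\ep^{1-\delta})\big)\big(f(y)-f(x)\big)>0$, so $E$ is strictly increasing on $[0,1)$.

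The step I expect to be the main obstacle is the joint control of the cumulative small-denominator losses and of the conjugations. Even with the arrowhead ansatz, $e^{-A_{j+1}}H_je^{A_{j+1}}$ immediately produces long-range, essentially rank-one terms $(\bm,\bn)\mapsto V_j(\bm,\bze)A_{j+1}(\bze,\bn)$ and their iterates, which couple arbitrarily distant sites; one must track the (super-small) $\bze$-row/column of $V_j$ with a strong exponential weight separately from its merely $O(\ep)$ bulk, verify that the newly created terms are quadratically small in these quantities, and, most delicately, ensure that the exponential decay rate is not degraded beyond the allotted budget over the infinitely many steps. This last point is exactly where the ``spreading of resonances'' estimate above is needed, and where the particular Diophantine exponent $1/(1+\mu)$ is used in an essential way. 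A secondary technical matter, to be handled at the outset, is that $f$ (hence $D_0$) may equal $-\infty$ at $\{0\}$ and its $\omega$-orbit, so $H(x)$ and all the conjugations must first be defined carefully---via quadratic forms, or as limits of bounded-below truncations of $f$---although every formula above degrades gracefully in that case.
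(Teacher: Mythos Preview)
Your arrowhead scheme shares the paper's core idea---isolate the $\bze$-row/column rather than attempt full diagonalization---but two points need attention.

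The more serious gap is the monotonicity of $E$. The Feynman--Hellmann identity $E'(x)=\sum_{\bn}f'(x+\omega\cdot\bn)|\psi(x;\bn)|^{2}$ presupposes that $x\mapsto\psi(x)$ is differentiable, which your construction does not establish: $\psi(x)$ is a limit of iterates built from denominators $f(x)-f(x+\omega\cdot\bn)$ that are at best continuous and may jump. Even granting the identity a.e., integrating gives only $E(y)-E(x)\ge(1-O(\ep^{1-\delta}))\int_x^y f'$, and for monotone $f$ one has $\int_x^y f'\le f(y)-f(x)$ with equality only when $f$ is absolutely continuous---not implied by $\alpha$-H\"older monotonicity. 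So your strict-monotonicity claim for general $\alpha$ does not follow. The paper takes a different route: it shows $E$ is close, on each small interval, to a \emph{finite volume} eigenvalue branch $E_{\bze}(\cdot)$ (Theorem~4.2 and Corollary~4.5), whose monotonicity is a combinatorial fact about operators with constant off-diagonal part (Proposition~4.1). This yields $E$ nondecreasing (Lipschitz monotone when $\alpha=1$) via Lemma~5.3, and strict monotonicity for $\alpha>1$ comes from an external input: a flat piece of $E$ would create an atom in the integrated density of states, contradicting Craig--Simon log-H\"older continuity (Proposition~5.4). The paper explicitly remarks that its perturbative analysis alone cannot exclude flat pieces.

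On convergence, your claim that the iteration ``closes'' with contraction by ``a fixed small multiple'' is not correct as stated: the local contraction factor is $\ep/\text{gap}(\bk)$, which is not uniformly small, and absorbing these losses into the weighted norm via the spreading-of-resonances mechanism is exactly the hard work you flag but do not perform. The paper's scheme differs structurally: it works on shrinking intervals $I_s\ni x_0$, applies Jacobi rotations only for $|\bk|\le s+1$ and only on $I_{s+1}$, and makes the conjugation covariant. The preservation of approximate monotonicity of the perturbed diagonal on $I_s$ (property (ind4)) is \emph{not} obtained from the induction alone; it again requires identifying $f^{(s)}(x,x_0)$ with the correct finite volume eigenvalue via an eigenvalue-counting argument (Section~4). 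Your pointwise approach may sidestep this particular obstacle since you need the gap only at one phase, but the paper's $\magn(\cdot)$ bookkeeping (Section~3.1, Lemma~3.1) is precisely the device that makes the small-denominator absorption quantitative; your outline gestures at it but does not supply it.
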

\noindent Define
$$
(T^{\bn}\psi)(x;\bm):=\psi(x;\bn+\bm).
$$
In a standard way, the conclusion of Theorem \ref{th_main} implies Anderson localization for the operator family $H(x)$, as follows:
\begin{cor}
\label{cor_localization}
Under the assumptions of Theorem $\ref{th_main}$, for every $x\in \R$ one has
\bee
\label{eq_covariance}
H(x)(T^{\bn}\psi(x-\bn\cdot\omega))=E(x-\bn\cdot\omega)(T^{\bn}\psi(x-\bn\cdot\omega)).
\ene
The corresponding family
\bee
\label{eq_eigenvectors}
\psi_{\bn}(x):=T^{\bn}\psi(x-\bn\cdot\omega),\quad \bn\in\Z^d,
\ene
is an orthonormal basis of eigenvectors of $H(x)$, with eigenvalues 
\bee
\label{eq_eigenvalues}
E_{\bn}(x):=E(x-\bn\cdot\omega),
\ene
respectively. In particular, the spectrum of $H(x)$ is pure point and simple, with uniformly exponentially localized eigenfunctions.
\end{cor}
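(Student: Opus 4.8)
The plan is to obtain Corollary~\ref{cor_localization} entirely from the translation covariance of the family $\{H(x)\}$, the pointwise eigenequation \eqref{eq_sinai_eigenfunction}, and the off-diagonal decay of $\psi$ supplied by Theorem~\ref{th_main}. The first step is to record the covariance identity: each shift $T^{\bn}$ is unitary on $\ell^2(\Z^d)$ and commutes with $\Delta$, while conjugation by $T^{\bn}$ carries multiplication by the function $\bm\mapsto f(x+\omega\cdot\bm)$ into multiplication by $\bm\mapsto f((x-\bn\cdot\omega)+\omega\cdot\bm)$; hence $H(x)T^{\bn}=T^{\bn}H(x-\bn\cdot\omega)$ (with $H(x)$ self-adjoint, interpreting any value $f=-\infty$ as a Dirichlet condition at the corresponding site, as elsewhere in the paper). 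Substituting $x\mapsto x-\bn\cdot\omega$ in \eqref{eq_sinai_eigenfunction} and then applying $T^{\bn}$ gives \eqref{eq_covariance} immediately, and thereby exhibits each vector of the family \eqref{eq_eigenvectors} as an eigenvector of $H(x)$ with eigenvalue \eqref{eq_eigenvalues}.

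The second step is orthonormality of $\{\psi_{\bn}(x)\}_{\bn\in\Z^d}$. Unitarity of $T^{\bn}$ together with $\|\psi(y)\|_{\ell^2(\Z^d)}=1$ for all $y$ gives $\|\psi_{\bn}(x)\|=1$. Since $H(x)$ is self-adjoint, it then suffices to check that the eigenvalues $E_{\bn}(x)$ are pairwise distinct. If $\bn\neq\bm$, then $\|(\bn-\bm)\cdot\omega\|>0$ because $\omega\in\Omega_{\rho,\mu}$, so $x-\bn\cdot\omega$ and $x-\bm\cdot\omega$ are distinct points of $\R/\Z$; as $E$ is $1$-periodic and strictly increasing on $[0,1)$, hence injective on $\R/\Z$, we get $E_{\bn}(x)\neq E_{\bm}(x)$, and eigenvectors of a self-adjoint operator for distinct eigenvalues are orthogonal.

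The only step with real content is completeness; this is the part I expect to require the most care. By Theorem~\ref{th_main}, $\psi(y)$ is uniformly close to $e_{\bze}$: $|\psi(y;\bze)-1|<\ep^{1-\delta}$ and $|\psi(y;\bk)|\le\ep^{(1-\delta)|\bk|}$ for $\bk\neq\bze$. Using $T^{\bn}e_{\bze}=e_{-\bn}$, one reads off that the $\bj$-th component of $e_{-\bn}-\psi_{\bn}(x)$ has modulus $<\ep^{1-\delta}$ when $\bj=-\bn$ and $\le\ep^{(1-\delta)|\bn+\bj|}$ otherwise. For $v\in\ell^2(\Z^d)$ write $\langle v,e_{-\bn}\rangle=\langle v,\psi_{\bn}(x)\rangle+\langle v,e_{-\bn}-\psi_{\bn}(x)\rangle$; bounding the last inner product termwise and taking the $\ell^2$-norm in $\bn$ via Minkowski's inequality and translation invariance gives
\[
\Bigl(\sum_{\bn\in\Z^d}\bigl|\langle v,e_{-\bn}-\psi_{\bn}(x)\rangle\bigr|^{2}\Bigr)^{1/2}\le\Bigl(\ep^{1-\delta}+\sum_{\bk\neq\bze}\ep^{(1-\delta)|\bk|}\Bigr)\|v\|,
\]
and the constant on the right is strictly less than $1$ once $\ep_0$ is taken small enough (the series converges and tends to $0$ as $\ep\to0$). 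Hence, if $v\perp\psi_{\bn}(x)$ for all $\bn$, then $\|v\|^{2}=\sum_{\bn}|\langle v,e_{-\bn}\rangle|^{2}<\|v\|^{2}$, forcing $v=0$, so the orthonormal system $\{\psi_{\bn}(x)\}$ is an orthonormal basis. The remaining conclusions follow at once: expanding an arbitrary eigenvector of $H(x)$ in this basis shows the spectrum is pure point, equal to $\overline{\{E_{\bn}(x):\bn\in\Z^d\}}$, and simple since the $E_{\bn}(x)$ are distinct; and $|\psi_{\bn}(x;\bm)|\le\ep^{(1-\delta)|\bn+\bm|}=e^{-(1-\delta)\log(1/\ep)\,|\bm+\bn|}$ for $\bm\neq-\bn$ is exponential localization of $\psi_{\bn}(x)$ about the site $-\bn$ at a rate uniform in $\bn$ and $x$.
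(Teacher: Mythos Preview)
Your proof is correct and follows the same route as the paper: covariance gives the eigenvector equation, strict monotonicity of $E$ gives distinct eigenvalues and hence orthogonality, and completeness follows because $\{\psi_{\bn}(x)\}$ is a small perturbation of the standard basis. The paper phrases the completeness step as a Schur-test bound $\|A-\one\|<1$ for the change-of-basis matrix $A(x;\bm,\bn):=\psi_{\bn}(x;\bm)$, which is exactly your convolution/Minkowski estimate recast in operator-norm language.
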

\begin{rem}
\label{rem_infinite}
We allow $f(x)=-\infty$ for $x=0$. Similarly to \cite{JK2}, this corresponds to the case of infinite coupling, where the operator has an eigenvector $e_{\bze}$ with a (generalized) eigenvalue $-\infty$. We refer the reader to \cite[Section 3]{Cantor} for more details on Schr\"odinger operators with infinite potentials. It is easy to check that the proof of Theorem \ref{th_main} extends to the case $f(0)=-\infty$, with infinite denominators in certain places leading to the corresponding expressions vanishing. Instead of $\alpha$-H\"older monotone functions on $[0,1)$, one can also consider $\alpha$-H\"older monotone functions on $(0,1]$ with values in $(\infty,+\infty]$ (for example, $f(x)$ can be replaced by $-f(-x)$).
\end{rem}
A natural question for quasiperiodic Schr\"odinger operators is the existence of spectral gaps. We will say that $f\colon \R\to \R$ is {\it sawtooth-type}\footnote{Note that the definition here is somewhat broader than the corresponding one in \cite{Cantor}, where it is assumed that $f$ is Lipschitz monotone.} if $f$ is $\alpha$-H\"older monotone, continuous on $[0,1)$, and extends to a continuous map from $[0,1]$ to $\R$ (in particular, this implies that $f$ is bounded). For $d=1$, it was shown in \cite{Cantor} that a large class of quasiperiodic operators with sawtooth-type potentials with $\alpha=1$ has Cantor spectrum. In order to state the second main result, suppose that we are in the setting of Theorem \ref{th_main}. Define
$$
\ell^2_{\bze}(\Z^d):=\overline{\Span\{\psi_{\bn}(0)\colon \psi_{\bn}(0;\bze)\neq 0\}}_{\ell^2(\Z^d)}=\overline{\Span\{(H(0)-z)^{-1}e_{\bze}\colon z\in \C\setminus\R\}}_{\ell^2(\Z^d)}
$$
be the cyclic subspace of $H(0)$ associated to the vector $e_{\bze}$ (that is, the subspace spanned by all eigenvectors \eqref{eq_eigenvectors} of $H(0)$ that do not vanish at the origin). Note that, similarly to \cite{Cantor}, the operator at the phase $x=0$ plays a special role, and it is important that the conclusion of Theorem \ref{th_main} includes $x=0$.  It is easy to see that, for $\ep\neq 0$, the space $\ell^2_{\bze}(\Z^d)$ is infinite-dimensional. Let
$$
H_{\bze}:=H(0)|_{\ell^2_{\bze}(\Z^d)}
$$
be the corresponding restriction of the operator $H(0)$. The following is our second main result.
\begin{thm}
\label{th_gaps}
Suppose $f$ is sawtooth-type, $\omega\in \Omega_{\rho,\mu}$, and $\ep$ is small enough so that the conclusion of Theorem $\ref{th_main}$ holds. Then $\sigma(H_{\bze})$ is a Cantor set, and $\sigma(H(x))$ contains infinitely many gaps.
\end{thm}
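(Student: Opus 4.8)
By Corollary~\ref{cor_localization}, for every $x$ the operator $H(x)$ has the complete orthonormal eigenbasis $\{\psi_\bn(x)\}$ with eigenvalues $E(x-\bn\cdot\omega\bmod 1)$, so
\[
\sigma(H(x))=\overline{\{\,E(x-\bn\cdot\omega\bmod 1):\bn\in\Z^d\,\}};
\]
and $\ell^2_{\bze}(\Z^d)$, being the closed span of the sub-collection $\{\psi_\bn(0):\psi_\bn(0;\bze)\neq0\}$ of that eigenbasis, is a reducing subspace for $H(0)$, so
\[
\sigma(H_{\bze})=\overline{\{\,E(-\bn\cdot\omega\bmod 1):\psi_\bn(0;\bze)\neq0\,\}}\subseteq\sigma(H(0)).
\]
Since $\omega\in\Omega_{\rho,\mu}$ forces $1,\omega_1,\dots,\omega_d$ to be rationally independent (no $\bn\cdot\omega$, $\bn\neq\bze$, lies in $\Z$), every orbit $\{x-\bn\cdot\omega\bmod 1\}$ is dense in $[0,1)$, and in particular so is $\{-k\omega_1\bmod 1:k\in\Z\}$. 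Thus the theorem reduces to a statement about the single function $E$ of Theorem~\ref{th_main}: I will show that $E$, although strictly increasing on $[0,1)$ (as given) and right-continuous there, is discontinuous, with a genuine jump at every point of a dense subset of $[0,1)$ --- concretely at each $-k\omega_1\bmod 1$, $k\ge1$ --- and that $\psi_{ke_1}(0;\bze)\neq0$ for all $k\in\Z$.

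The jumps of $E$ are produced by the single jump of $f$. For sawtooth-type $f$ the $\alpha$-H\"older monotonicity gives $J:=f(1^-)-f(0)\ge1$, so on the circle $f$ is continuous away from $0$, where it drops by $J$ and is right-continuous. From the iterative scheme of Theorem~\ref{th_main} I would extract the locally uniform (real-analytic) dependence of the eigenvalue $E(y)$ and the amplitudes $\psi(y;\bn)$ on the potential samples $\{f(y+\bm\cdot\omega)\}_\bm$; in particular that, once all sites other than $\bze$ and $\bn$ are dressed, the branch $E$ depends on the diagonal value $t$ at site $\bn$ essentially like $t\mapsto f(y)+|A_{\bze\to\bn}|^2/(f(y)-t)$ up to higher order, where the tunneling amplitude $A_{\bze\to\bn}$ equals $\ep^{|\bn|}$ times a sum over minimal lattice paths from $\bze$ to $\bn$ of products of resolvent factors, plus $O(\ep^{|\bn|+1})$ (so $|A_{\bze\to\bn}|\sim\ep^{|\bn|}$, consistent with $|\psi(y;\bn)|\le\ep^{(1-\delta)|\bn|}$). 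As $y$ increases through $y_0:=-\bn\cdot\omega\bmod 1\in(0,1)$ the site-$\bn$ diagonal value of $H(y)$ drops, right-continuously, from $f(1^-)$ to $f(0)$ while all other entries vary continuously at $y_0$; hence $E$ is right-continuous at $y_0$ and $E(y_0^-)$ is the corresponding eigenvalue of $H(y_0)+J\,|e_\bn\rangle\langle e_\bn|$. Following the branch across the level crossing that occurs as $t$ sweeps from $f(0)$ to $f(1^-)$ then yields
\[
E(y_0)-E(y_0^-)=|A_{\bze\to\bn}(y_0)|^2\;\frac{J}{(f(y_0)-f(0))\,(f(1^-)-f(y_0))}\;\bigl(1+o(1)\bigr)>0,
\]
the denominator being bounded below by $\|\bn\cdot\omega\|^{2\alpha}\ge e^{-2\rho\alpha|\bn|^{1/(1+\mu)}}$ via the H\"older and Diophantine bounds, so this is a strictly positive number of order $\ep^{2|\bn|}$ once $\ep$ is small. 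Non-vanishing of $A_{\bze\to\bn}$ is the delicate point; for $\bn=ke_1$ there is a unique minimal path, whence $A_{\bze\to ke_1}(y_0)=\ep^k\prod_{j=1}^{k-1}(f(y_0)-f(y_0+j\omega_1))^{-1}+O(\ep^{k+1})$, each factor nonzero by injectivity of $f$ on the circle; the same computation gives $\psi_{ke_1}(0;\bze)\neq0$ for every $k\in\Z$. Hence $E$ has a genuine (left) jump at every $-k\omega_1\bmod 1$, $k\ge1$, a dense set.

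The rest is soft. A bounded, strictly increasing, right-continuous function $g$ on $[0,1)$ whose jump set is dense has $\overline{g([0,1))}$ equal to $[g(0),g(1^-)]$ with a dense family of pairwise disjoint open intervals $(g(y_0^-),g(y_0))$ removed (one per jump, nonempty exactly because the jump is), so $\overline{g([0,1))}$ is compact, nowhere dense, and --- right-continuity making each $g(y_0)$ a non-isolated right endpoint --- perfect: a Cantor set; moreover $\overline{g(D)}=\overline{g([0,1))}$ for every dense $D\subseteq[0,1)$. Applied with $g=E$: for any $x$, $\sigma(H(x))=\overline{E(\{x-\bn\cdot\omega\bmod 1\})}=\overline{E([0,1))}$, which is a Cantor set; being compact, nowhere dense and perfect with more than one point ($E(0)<E(1^-)$), its complement in $\R$ has infinitely many bounded components, i.e.\ $\sigma(H(x))$ has infinitely many gaps. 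Finally, since $\psi_{ke_1}(0;\bze)\neq0$ for all $k$, the dense set $\{-k\omega_1\bmod 1:k\in\Z\}$ is contained in $\{-\bn\cdot\omega\bmod 1:\psi_\bn(0;\bze)\neq0\}$, so $\overline{E([0,1))}=\overline{E(\{-k\omega_1\bmod 1\})}\subseteq\sigma(H_{\bze})\subseteq\sigma(H(0))=\overline{E([0,1))}$, giving $\sigma(H_{\bze})=\overline{E([0,1))}$, a Cantor set.

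The main obstacle is the middle step: making rigorous, within the specific construction of Theorem~\ref{th_main}, the locally uniform analytic dependence of $E$ and of the amplitudes on the samples $f(\cdot+\bm\cdot\omega)$, the right-continuity of $E$ (which I expect to follow from right-continuity of those samples together with the $y$-uniform exponential decay of $\psi(y;\bm)$), and above all the non-vanishing and $\ep^{2|\bn|}$ size of the jump --- the unique-minimal-path computation being what secures non-vanishing along the subsequence $ke_1$, which is all that is needed. Everything after that is the elementary topology of monotone functions together with the dictionary supplied by Corollary~\ref{cor_localization}; note that it in fact yields the slightly stronger conclusion that $\sigma(H(x))$ is itself a Cantor set for every $x$.
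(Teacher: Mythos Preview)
Your approach is genuinely different from the paper's. The paper never analyzes the discontinuities of $E$; instead it works with the rank-one family $H_{\bze,t}=H_{\bze}+t\langle e_{\bze},\cdot\rangle e_{\bze}$ and imports two ready-made tools: the del Rio--Gordon--Makarov--Simon theorem \cite{drms,G1,G2} (for a dense $G_\delta$ set of $t\in\rbar\setminus[f(0),f(1{-}0)]$, every eigenvalue of $H_{\bze,t}$ is isolated, hence sits in a gap of $\Sigma_{\bze}=\sigma_{\mathrm{ess}}(H_{\bze,t})$) and the gap-filling theorem \cite[Theorem~1.2]{Cantor} (together with simplicity of the spectrum, each such gap persists in $\sigma(H(x))$). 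The argument is entirely soft --- no gap is ever located or sized.

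Your direct route via the jump set of $E$ would, if completed, yield more (Cantor spectrum for $\sigma(H(x))$ itself, not just for the cyclic piece), but the ``main obstacle'' you name is a genuine and substantial gap, and none of its three ingredients is supplied by the paper. First, right-continuity of $x_0\mapsto E(x_0)$: the scheme of Section~2 is run afresh for each $x_0$ (the intervals $I_s$ are centred at $x_0$), so one-sided continuity of the limit does not come for free, and without it your identity $\overline{E(D)}=\overline{E([0,1))}$ for dense $D$ --- on which the entire soft part rests --- is unjustified. Second, the jump formula: your expression is a second-order heuristic, but extracting a rigorous $\ep^{2|\bn|}(1+o(1))$ leading term at $y_0=\{-\bn\cdot\omega\}$ means tracking the effect of a discontinuity in a \emph{single} diagonal entry through all scales of the iteration, uniformly in the phase near $y_0$ --- effectively a new convergence analysis that the paper does not contain. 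Third, non-vanishing: your unique-minimal-path computation for $A_{\bze\to ke_1}$ and for $\psi_{ke_1}(0;\bze)$ gives only the leading order in $\ep$; the $O(\ep^{k+1})$ remainder bound you invoke is precisely what that missing analysis would have to deliver. The paper sidesteps all of this by retreating to the cyclic subspace and invoking rank-one perturbation theory as a black box; the price is the weaker conclusion for the full operator.
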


\subsection{Overview}
The most well-known example of a (Lipschitz) monotone potential is the Maryland model with $f(x)=\cot(\pi x)$, see \cite{Pastur,Grempel,JHY,JL,JY,Simon_maryland}, for which localization is known for all $\ep>0$ and Diophantine $\omega$ (most likely, the proofs can be extended to $\omega\in\Omega_{\rho,\mu}$). For $d=1$, a large class of operators with Lipschitz monotone potentials also admits Anderson localization for all $\ep>0$, see \cite{JK1,Ilya,Xiaowen,JK2}. In the case of general monotone potentials for $d>1$, other than the Maryland model, only perturbative methods are currently available. The situation depends significantly depending on whether or not $f$ has discontinuities in the generalized sense (that is, considered as a map from $[0,1)$ identified with a circle to $\R\cup\{\infty\}$ also identified with a circle). For Maryland-type potentials (that is, without additional discontinuities), under various assumptions on regularity, perturbative localization can be obtained by a variety of methods \cite{Bellissard,KPS,KKPS,Facchi,Shi1}. However, the presence of discontinuities such as in $f(x)=\{x\}$ brings some difficulties to the previously known perturbative methods, due to the lack of regularity of the small denominators and/or incomplete cancellations due to discontinuities. Some partial results \cite{Craig_pure_point,Facchi,Pos,Shi1} were obtained by carefully choosing the off-diagonal part of the operator. In the recent work \cite{Shi}, perturbative localization for a class of Lipshitz monotone potentials on $\Z^d$ was obtained, with an approach involving multi-scale analysis and studying eigenvalues of finite volume operators using Rellich functions.

Theorem \ref{th_main} of the present paper is a direct perturbative result on Anderson localization for H\"older monotone potentials. Although, on a very general level, one can find some parallels between the difficulties both our proof and \cite{Shi} address, the two proofs are quite different and have been obtained independently and almost sumultaneously. The following are some aspects of proofs of Theorems \ref{th_main} and \ref{th_gaps}.
\begin{itemize}
	\item Our main result is, essentially, convergence of a diagonalization scheme for the operator $H(x)$ which, like the perturbation series in \cite{KPS}, is quite naive: we are trying to diagonalize the operator by a sequence of perturbative Jacobi rotations. The usual Nash--Moser iteration scheme of this kind is known to have difficulties in the case of potentials with discontinuities (see, for example, \cite{Shi1}). The main idea is to temporarily give up on the goal of a complete diagonalization and, instead, provide a partial diagonalization, which, ultimately, will isolate a one-dimensional eigenspace. This allows more freedom in choosing the matrices that deliver partial diagonalization and significantly reduces the number of small denominators that one has to control simultaneously.
	\item Since the original function $f$ is (quantitatively) monotone, it is easy to control small denominators at the first step of the iterative procedure. However, the procedure does not preserve exact monotonicity. While some kind of approximate monotonicity can be preserved, its accuracy on the interval under consideration has to improve, in order to be able to treat smaller and smaller denominators. It seems difficult to obtain such an improvement by means of an inductive procedure alone. This difficulty is resolved by relating the diagonal entries to the branches of the eigenvalues of the finite volume operators. In our approach, the construction of monotone eigenvalue branches is based on a combinatorial argument, originally obtained in \cite{Ilya}, which determines a monotone eigenvalue branch based on the eigenvalue counting function. The partial monotonicity that is preserved during the course of the inductive procedure provides sufficient information to confirm that the diagonal entry under consideration is close to the ``correct'' monotone branch of the finite volume eigenvalue.
	\item Our construction is ``truly perturbative'' in the sense that we obtain the basis of eigenvectors that is a small perturbation of the standard basis. It also produces covariant branches of eigenvalues and eigenvectors defined for all phases (``Sinai functions'' \cite{Sinai}) and naturally implies simplicity of the eigenvalues.
	\item The condition $\omega\in \Omega_{\rho,\mu}$ is weaker than the usual Diophantine condition and is convenient to work with in the chosen scaling for the small denominators. The H\"older monotonicity condition also does not present substantial extra difficulties in the main proof of localization. However, the main issue is that, if a potential satisfies a quantitative monotonicity condition, then the monotone eigenvalue branches only inherit a local condition of the same kind. While local Lipschitz monotonicity implies global Lipschitz monotonicity, the same does not hold for H\"older monotonicity. As a result, we lose quantitative control on monotonicity of the eigenvalue branches and, ultimately, the function $E(\cdot)$. From our analysis alone, we cannot exclude the possibility of $E$ being constant on an interval. However, since in all cases we have (non-strict) monotonicity of $E$ on $[0,1)$, the presence of a flat piece would lead to a jump discontinuity in the integrated density of states, which is impossible \cite{CS_ids,BK}. As a consequence, $E(\cdot)$ is strictly monotone on $[0,1)$ in all cases covered by Theorem \ref{th_main}.
	\item In the case of $\alpha$-H\"older monotone functions with $\alpha>1$, the non-perturbative arguments of \cite{JK1,Ilya,Xiaowen,JK2} encounter some difficulties: while almost everywhere positivity of the Lyapunov exponent still holds, it is harder to obtain a quantitative positivity argument and/or absolute continuity of the IDS, which is originally used to state that the possible zero set of the Lyapunov exponent does not contribute to the spectral measure.
	\item In Theorem \ref{th_gaps}, note that, for sawtooth-type $f$, the spectrum $\sigma(H(x))$ does not depend on $x$. The proof of Theorem \ref{th_gaps} is contained in Section \ref{section_gaps}. The first claim immediately follows from del Rio--Gordon--Makarov--Simon theorem applied to operators of the form $H_{\bze}+t\<e_{\bze},\cdot\>e_{\bze}$, similarly to the proof of \cite[Theorem 2.9]{Cantor}. For topologically generic $t\in \R\setminus[f(0),f(1-0)]$, infinitely many gaps in $\sigma_{\mathrm{ess}}(H_{\bze})+t\<e_{\bze},\cdot\>e_{\bze}$ will contain an (isolated) eigenvalue. The gap filling theorem \cite[Theorem 1.2]{Cantor} and simplicity of the spectrum imply that all these gaps will be present in $\sigma(H(x))$.
\end{itemize}
\subsection{Structure of the paper}In Section 2, we describe the iterative scheme and state Theorem \ref{th_convergence}, the main technical result, which is the convergence of the scheme to correct eigenvalues and eigenvectors. The ``combinatorial'' structure of the diagonalizing matrices is described in Lemma \ref{lemma_W_support}.

In Section 3, we state and prove (most of) Theorem \ref{th_inductive_estimates}, which contains the main quantitative bounds that imply the convergence of the scheme. As mentioned in the overview, the proof requires an additional argument that involves finite volume eigenvalue counting functions, Theorem \ref{th_ind_4}, which is proved in Section 4.

In Section 5, we finish the proof of Theorem \ref{th_main} and establish monotonicity of the function $E(\cdot)$. We also discuss its relation with the integrated density of states.

In Section 6, we prove Theorem \ref{th_gaps}, using monotonicity of $E(\cdot)$ obtained in Theorem \ref{th_main} and ``soft'' arguments from \cite{Cantor}, including the gap filling theorem.
\subsection{Acknowledgements}I. K. was supported by the NSF grants DMS--1846114, DMS--2052519, and the 2022 Sloan Research Fellowship. L. P. was supported by the EPSRC grant EP/V051636/1. R. S. was supported by the NSF grant DMS-2306327.

The authors would like to thank S. Jitomirskaya for valuable discussions and the anonymous referees for useful suggestions and remarks.
\subsection{Data availability statement.}No data was produced during the research and preparation of the manuscript.
\subsection{Conflict of interest statement.}The authors declare that they have no conflicts of interests and no financial interests related to the present manuscript and the associated research.

\section{An iterative construction}
\subsection{Overview}
Fix $x_0\in [0,1)$. Let $H(x)$ be defined as in \eqref{eq_h_def}. Our goal will be to construct a sequence of self-adjoint operators $H^{(s)}(x,x_0)$, with $H^{(0)}(x,x_0)=H(x)$, and a sequence of unitary operators $U^{(s)}(x,x_0)$ with 
$$
H^{(s+1)}(x,x_0)=U^{(s+1)}(x,x_0)^{-1}H^{(s)}(x,x_0)U^{(s+1)}(x,x_0),\quad s=0,1,2,\ldots.
$$
Each operator $U^{(s)}$ will be a small perturbation of $\one$:
\bee
\label{eq_norm_estimate_u}
\|U^{(s)}-\one\|\le \ep^{s(1-\delta/10)},
\ene
As a consequence, the sequence
$$
W^{(s)}(x,x_0)=\prod_{\ell=1}^s U^{(\ell)}(x,x_0)
$$
will converge (in the operator norm) to a unitary operator $W^{(\infty)}(x,x_0)$, and we will make sure that the operator $H^{(\infty)}(x,x_0)=W^{(\infty)}(x,x_0)^{-1}H(x)W^{(\infty)}(x,x_0)$ will satisfy
$$
H^{(\infty)}(x_0,x_0;\bze,\bm)=0, \quad \forall \bm\in \Z^d\setminus\{\bze\}.
$$
In other words, conjugation by $W^{(s)}(x_0,x_0)$ will gradually eliminate the off-diagonal entries of $H(x_0;\bze,\bn)$ with $\bn\neq \bze$.

It is easy to see that the above construction produces a normalized eigenvector
\bee
\label{eq_psi_definition}
\psi(x_0):={W^{(\infty)}(x_0,x_0)e_{\bze}}
\ene
with the eigenvalue
\bee
\label{eq_e_definition}
E(x_0):=H^{(\infty)}(x_0,x_0;\bze,\bze),
\ene
that is,
$$
H(x_0)\psi(x_0)=E(x_0)\psi(x_0).
$$
From \eqref{eq_norm_estimate_u} and the structure of $U^{(s)}$, this eigenvector will be uniformly (in $x_0$) localized near the origin:
\bee
\label{eq_psi_localized}
\|\psi(x_0)-e_{\bze}\|\le \ep^{1-\delta},\quad |\psi(x_{\bze};\bn)|\le \ep^{(1-\delta)|\bn|}.
\ene
As stated in Corollary \ref{cor_localization}, the covariance property
$$
H(x_0+\bn\cdot\omega)\psi(x_0+\bn\cdot\omega)=T^{\bn}H(x_0)T^{-\bn}\psi(x_0+\bn\cdot\omega),\quad \text{where}\quad (T^{\bn}\psi)(\bm)=\psi(\bm+\bn),
$$
implies
$$
H(x_0)T^{\bn}\psi(x_0-\bn\cdot\omega)=E(x_0-\bn\cdot\omega)T^{\bn}\psi(x_0-\bn\cdot\omega).
$$
In other words, $\psi_{\bn}(x_0)=T^{\bn}\psi(x_0-\bn\cdot\omega)$ is another eigenvector of $H(x_0)$, exponentially localized near $\bn$. The standard arguments imply that the isometric operator
$$
\Psi(x_0)\colon\ell^2(\Z^d)\to\ell^2(\Z^d),\quad \Psi(x_0) e_{\bn}=\psi_{\bn}(x_0)=T^{\bn}\psi(x_0-\bn\cdot\omega),
$$
is a small perturbation of $\one$, and therefore is a unitary operator. As a consequence, the above construction provides a complete orthonormal basis of exponentially localized eigenvectors of $H(x_0)$.

The sequence of operators $U^{(s)}(x,x_0)$ will be determined by $H(x)$ and a sequence of intervals
\bee
\label{eq_interval_general}
\T=I_0\supset I_1\supset  I_2\supset\ldots
\ene
Here, an interval is an open connected subset of $\T$. We will always assume that $\T$ is identified with $[0,1)$. An interval may appear as a disconnected subset of $[0,1)$ in the standard topology: for example, for $x_0=4/5$, we have
$$
(x_0-2/5,x_0+2/5)=[0,1/5)\cup(2/5,1)\,\,\mathrm{mod}\,\,\Z.
$$
Each operator $U^{(s)}(x,x_0)$ will depend on $x_0$ through the choice of the intervals $I_1,I_2,\ldots,I_{s}$:
\bee
\label{eq_u_actual_dependence}
U^{(s)}(x,x_0)=U^{(s)}(I_1(x_0),I_2(x_0),\ldots,I_s(x_0),x),
\ene
where
\bee
\label{eq_interval_specific}
I_s=I_s(x_0)=(x_0-\beta^{(s)},x_0+\beta^{(s)}),\quad \beta^{(s)}\to 0\,\,\,\text{as}\,\,\,s\to \infty,
\ene
and
\bee
\label{eq_betas_def}
\beta^{(s)}:=\beta^{\frac{s}{\log(s+1)}},\quad 0<\beta<\beta_0.
\ene
The choice of a small $\beta_0$ in relation to other small and large parameters will be discussed later in Remark \ref{rem_small_parameters}. We will also have
$$
U^{(s)}(x,x_0)=\one ,\quad \text{for}\quad x\in \T\setminus I_s.
$$
\subsection{Preliminaries: perturbative Jacobi rotation and supports of operators} 
\begin{defn}
A self-adjoint matrix
\bee
\label{eq_matrix}
\begin{pmatrix}
	a&h\\ h &b
\end{pmatrix},\quad a,b,h\in \R
\ene
is {\it diagonally dominant} if
\bee
\label{eq_diagonally_dominant}
|h|<|b-a|.
\ene	
\end{defn}
For $h\in (-|b-a|,|b-a|)$, there exist unique real analytic (in $h$) branches of eigenvectors of \eqref{eq_matrix} which become the standard basis vectors at $h=0$. It is easy to see (by rescaling) that these eigenvectors only depend on $\frac{h}{|b-a|}$.  More precisely, there exists a unique function $\tau\mapsto U(\tau)$ such that $U(\tau)$ is an orthogonal $(2\times 2)$-matrix, $U(0)=\one$, the entries of $U(\tau)$ are real analytic in $\tau$ on $(-1,1)$, and the matrix
$$
U^{-1}(\tau)\begin{pmatrix}
	a&h\\ h &b
\end{pmatrix}U(\tau),\quad\text{where}\quad \tau=\frac{h}{|b-a|}\in (-1,1),
$$
is diagonal. The actual expression for $U(\tau)$ can be easily written down but is not particularly convenient to work with due to the presence of square roots. In our situation, the case of specific importance will be that of $\tau\ll 1$, in which case it is often be sufficient to work with the lowest non-trivial approximation in $\tau$:
\bee
\label{eq_u_structure}
U(\tau)=\begin{pmatrix}
1&\tau\\-\tau&1
\end{pmatrix}+\tau^2 W(\tau)=:V(\tau)+\tau^2 W(\tau),
\ene
where $W(\cdot)$ is a real analytic matrix-valued function with, say, $\|W(\tau)\|<10$ for $|\tau|<1/10$. The general case (for matrices that are not necessarily diagonally dominant) of the above procedure is usually referred to as {\it Jacobi rotation}. We will be applying this construction in the following somewhat more specific situation.
\begin{defn}Suppose that 
$$
A(x)=\begin{pmatrix}
	a(x)&h(x)\\ h(x) &b(x)
\end{pmatrix},\quad\text{where}\quad  a,b\colon \R\to [-\infty,+\infty),\quad h\colon \R\to \R
$$
is a $1$-periodic self-adjoint matrix-valued function. Assume that $[0,1)$ is identified with $\T$ and $I\subset \T$ is an interval such that $A(x)$ is diagonally dominant for $x\in I$. Define 
\bee
\label{eq_delta_definition}
\tau(x):=\frac{h(x)}{|b(x)-a(x)|}
\ene
as above. The matrix-valued function
$$
U_I(x):=\begin{cases}
U(\tau(x)),&x\in I+\Z;\\
\one,& x\in \R\setminus (I+\Z)
\end{cases}
$$
will be called {\it the perturbative Jacobi rotation matrix for $A(x)$ on the interval $I$}. The map $A(\cdot)\mapsto U_I(\cdot)^{-1}A(\cdot)U_I(\cdot)$ will be called {\it the perturbative Jacobi rotation on $I$}.
\end{defn}
\begin{rem}
\label{rem_standard_infinity}
In the above construction, we allow the possibility of (at most one of) the entries $a(x)$, $b(x)$ being equal to $-\infty$. For these values of $x$, we have $U_I(x)=\one$. Note that, in this construction, we do not use any smoothing cut-off functions to interpolate between $U(\tau(x))$ and $\one$. 
\end{rem}

The construction of the diagonalizing matrices will involve a sequence of perturbative Jacobi rotations applied to various $(2\times 2)$-blocks of the operator $H^{(j)}(x,x_0)$. In order to describe the relations between these diagonalizations, we will need some combinatorial preliminaries.

\begin{defn}Let $A$ be a bounded normal operator on $\ell^2(\Z^d)$ and $S\subset \Z^d$. We will say that $A$  {\it is supported on} $S$ if $A=A_S\oplus \one_{\Z^d\setminus S}$, where $A_S$ is some (normal) operator on $\ell^2(S)$.	
\end{defn}
In other words, $\ell^2(S)$ is an invariant subspace of $A$, and $A$ acts as an identity on $\ell^2(S)^{\perp}$. For every (bounded) operator $A$ on $\ell^2(\Z^d)$, there exists a minimal (with respect to inclusion) subset $S\subset\Z^d$ with this property, which we will denote by $\supp\,A$. An operator supported on $S$ can be naturally identified with on operator on $\ell^2(S)$. The notion of support will be used for unitary operators, which is why it is natural to extend them by identity outside of that.

\begin{defn}Let $A$ be a bounded normal operator on $\ell^2(\Z^d)$ and $\{S_1,S_2,\ldots\}$ be a collection of mutually disjoint subsets of $\Z^d$. Assume that
$$
A=\l(\oplus_r A_r\r)\oplus \one_{\Z^d\setminus(\cup_r S_r)},
$$
where $A_r$ is some (normal) operator on $\ell^2(S_r)$. In this case we will say that $A$ {\it is supported on $\{S_1,S_2,\ldots\}$.}
\end{defn}
\begin{defn}Let $A$ be a bounded normal operator on $\ell^2(\Z^d)$. We say that $S\subset \Z^d$ is a {\it component of the support of $A$} if the following are true:
\begin{enumerate}
	\item $\ell^2(S)$ is an invariant subspace for $A$.
	\item $A|_{\ell^2(S)}\neq \one_{\ell^2(S)}$.
	\item $S$ is a minimal (with respect to inclusion) non-empty subset of $\Z^d$ satisfying properties (1) and (2).
\end{enumerate}
\end{defn}
Note that any two distinct components of the support of $A$ must be disjoint. If $\{S_1,S_2,\ldots\}$ is the collection of all components of the support of $A$, then $A$ is supported on $\{S_1,S_2,\ldots\}$, making it a somewhat ``most refined'' choice of subsets $S_j$. Since $A$ is normal, a subspace $\ell^2(S)$ is invariant for $A$ if and only if its orthogonal complement $\ell^2(S)^{\perp}=\ell^2(\Z^d\setminus S)$ is invariant for $A$.

%Among collections with this property, one can choose the most refined one which will be unique. In this case, we will have $\supp S=S_1\cup S_2\cup\ldots\,$, and the collection is unique. We will call the sets $S_j$ {\it components} of $\supp\,A$.

%Let $\{A_r\}_{r\in J}$ be a family of operators on $\ell^2(\Z^d)$, uniformly bounded in the operator norm and indexed by a countable set $J$. Assume that $A_r$ is supported on $S_r\subset \Z^d$ and $S_r\cap S_p=\varnothing$ for $r\neq p$. Then the product
%$$
%A=\prod_{r\in J}A_j=\l(\oplus_r (A_r|_{S_r})\r)\oplus \one_{\Z^d\setminus(\cup_r S_r)}.
%$$
%is well defined (in the strong operator topology) and does not depend on the order of the factors. We refer to such products as {\it products of operators with disjoint supports}. For such operators $A$, we will say that it is supported on $\{S_1,S_2,\ldots\}$. Clearly, each $S_r$ produces an invariant subspace $\ell^2(S_r)$ for $A$.

Suppose that $A^{(i)}$ is supported on $\{S_1^{(i)},S_2^{(i)},\ldots\}$, for $i=1,2$, respectively. Then one can describe the support of $A=A^{(2)}A^{(1)}$ as follows. Consider the graph $\Gamma$ with the set of vertices $\Z^d$, and the property that there is an edge between $\bm$ and $\bn$ iff, for some $p,q\in \N$, we have $\bm\in S^{(1)}_p$, $\bn\in S^{(2)}_q$, $S^{(1)}_p\cap S^{(2)}_q\neq \varnothing$, or vice versa (that is, with $\bm$ and $\bn$ interchanged). Then $A$ is supported on $\{S_1,S_2,\ldots\}$, where $S_k$ are connected components of that graph. More directly, the sets $S_k$ are equivalence classes of the following equivalence relation: $\bm\sim\bn$ if there exists a finite sequence $S_{k_1}^{(i_1)},\ldots,S_{k_p}^{(i_p)}$ with $\bm\in S_{k_1}^{(i_1)}$, $\bn\in S_{k_p}^{(i_p)}$, $S_{k_r}^{(i_r)}\cap S_{k_{r+1}}^{i_{(r+1)}}\neq \varnothing$. Without loss of generality, one can assume $i_{r+1}\neq i_r$ (note that both indices only take values $1$ or $2$). One can also extend this description inductively to products of any finite number of operators.

A special case of the above construction will appear in our situation.
\begin{lem}
\label{lemma_separated}
In the above setting, suppose that $A^{(i)}$ satisfy the following property: for every $p,q,r\in \N$ such that
$$
S^{(2)}_p\cap S^{(1)}_r\neq \varnothing,\quad S^{(2)}_q\cap S^{(1)}_r\neq \varnothing,
$$
one must have $p=q$.

Then $A=A^{(2)}A^{(1)}$ is supported on $\{S_1,S_2,\ldots,S_1',S_2',\ldots\}$, where
$$
S_j=S_j^{(2)}\cup \l(\bigcup_{r\colon S_r^{(1)}\cap S_j^{(2)}\neq \varnothing}S_r^{(1)}\r),
$$
and $S_j'$ are those of $S_r^{(1)}$ that do not intersect any of $S_p^{(2)}$.
\end{lem}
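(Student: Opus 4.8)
The plan is to deduce the lemma from the general connected-component description of $\supp(A^{(2)}A^{(1)})$ given immediately above, by observing that under the separation hypothesis the equivalence classes collapse to the explicitly listed sets. Equivalently — and this is the route I would actually carry out — I would verify directly that the family $\{S_1,S_2,\ldots,S_1',S_2',\ldots\}$ consists of pairwise disjoint subsets of $\Z^d$ whose union is $\big(\bigcup_r S_r^{(1)}\big)\cup\big(\bigcup_p S_p^{(2)}\big)$, that each $\ell^2(S_j)$ and each $\ell^2(S_j')$ is an invariant subspace for $A=A^{(2)}A^{(1)}$, and that $A$ acts as the identity on the orthogonal complement of their union. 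By definition of "supported on", this is exactly the assertion of the lemma.

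First I would record two structural facts about $S_j=S_j^{(2)}\cup\bigcup_{r:\,S_r^{(1)}\cap S_j^{(2)}\neq\varnothing}S_r^{(1)}$. (i) Every point of $S_j^{(2)}$ either lies in some $S_r^{(1)}$ — which then meets $S_j^{(2)}$ and is therefore one of the absorbed sets — or lies in $\Z^d\setminus\bigcup_r S_r^{(1)}$; hence $S_j$ is a union of certain $S_r^{(1)}$'s together with a subset of $\Z^d\setminus\bigcup_r S_r^{(1)}$. (ii) By the separation hypothesis, any $S_r^{(1)}$ absorbed into $S_j$ meets $S_j^{(2)}$ and no other $S_p^{(2)}$, so $S_r^{(1)}\setminus S_j^{(2)}\subset\Z^d\setminus\bigcup_p S_p^{(2)}$; consequently $S_j=S_j^{(2)}\cup(\text{a subset of }\Z^d\setminus\bigcup_p S_p^{(2)})$. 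Now, since $A^{(1)}$ is supported on $\{S_r^{(1)}\}$, fact (i) shows $\ell^2(S_j)$ is invariant for $A^{(1)}$ (it is built from $A^{(1)}$-invariant pieces $\ell^2(S_r^{(1)})$ and from points where $A^{(1)}=\one$); since $A^{(2)}$ is supported on $\{S_p^{(2)}\}$, fact (ii) shows $\ell^2(S_j)$ is invariant for $A^{(2)}$; hence $\ell^2(S_j)$ is invariant for the product $A$. For $S_j'$, which equals some $S_r^{(1)}$ disjoint from every $S_p^{(2)}$, invariance under $A^{(1)}$ is immediate and $A^{(2)}$ restricts to $\one$ on $\ell^2(S_j')$, so $\ell^2(S_j')$ is $A$-invariant as well.

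Next comes disjointness, the one place where the hypothesis $p=q$ is genuinely used. If $S_j\cap S_k\neq\varnothing$ with $j\neq k$, then, tracing the definition, at least one of the following holds: $S_j^{(2)}\cap S_k^{(2)}\neq\varnothing$ (impossible, the $S^{(2)}$'s are disjoint); some $S_r^{(1)}$ meets both $S_j^{(2)}$ and $S_k^{(2)}$ (ruled out by the hypothesis); or some absorbed $S_r^{(1)}\subset S_j$ meets some absorbed $S_{r'}^{(1)}\subset S_k$, forcing $S_r^{(1)}=S_{r'}^{(1)}$ by disjointness of the $S^{(1)}$'s and hence again a single $S^{(1)}$-set meeting both $S_j^{(2)}$ and $S_k^{(2)}$ — contradiction. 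Mutual disjointness of the $S_j'$ and their disjointness from the $S_j$ follow from fact (ii) together with disjointness of the $S^{(1)}$'s. Finally $\bigcup_j S_j\cup\bigcup_j S_j'=\bigcup_p S_p^{(2)}\cup\bigcup_r S_r^{(1)}$, because each $S_p^{(2)}$ lies in the corresponding $S_p$ and each $S_r^{(1)}$ is either absorbed into a (unique) $S_j$ or is listed among the $S_j'$. On the complement of this union both $A^{(1)}$ and $A^{(2)}$ act as $\one$, hence so does $A$. Combining the invariance, disjointness and covering statements yields $A=\big(\bigoplus_j A|_{\ell^2(S_j)}\big)\oplus\big(\bigoplus_j A|_{\ell^2(S_j')}\big)\oplus\one$, which is precisely the claim.

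I do not expect a real obstacle here: the argument is essentially bookkeeping, and the only substantive point is the pairwise disjointness of the $S_j$, for which the assumption "one must have $p=q$" is exactly what is needed. The one detail to keep track of carefully is that an $S_r^{(1)}$ absorbed into $S_j$ may protrude beyond $S_j^{(2)}$, but by fact (ii) it protrudes only into the region where $A^{(2)}$ is trivial, so no new overlaps between the $S_j$'s are created.
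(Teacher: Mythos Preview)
Your proposal is correct. The paper's own proof is a one-line appeal to the equivalence-relation description immediately preceding the lemma: under the separation hypothesis, any chain $S_{k_1}^{(i_1)},\ldots,S_{k_p}^{(i_p)}$ with consecutive sets intersecting can contain at most one set from the family $\{S_p^{(2)}\}$, and hence collapses to a chain of length at most $3$; the equivalence classes are then precisely the sets $S_j$ and $S_j'$ listed.

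You mention this route in your opening sentence but then carry out instead the direct verification that the listed family is a disjoint family of $A$-invariant subsets covering $\supp A^{(1)}\cup\supp A^{(2)}$, with $A=\one$ on the complement. This is a genuinely different argument: it bypasses the graph/chain machinery entirely and checks the definition of ``supported on'' by hand. What it buys is self-containment and transparency about where the hypothesis $p=q$ enters (your disjointness step); what the paper's approach buys is brevity, at the cost of relying on the preceding general description of components of $\supp(A^{(2)}A^{(1)})$. Both arguments are sound, and your facts (i) and (ii) are exactly the right structural observations for the direct route.
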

\begin{proof}
This follows from the fact that any chain $S_{k_1}^{(i_1)},\ldots,S_{k_p}^{(i_p)}$ with $\bm\in S_{k_1}^{(i_1)}$, $\bn\in S_{k_p}^{(i_p)}$, $S_{k_r}^{(i_r)}\cap S_{k_{r+1}}^{i_{(r+1)}}\neq \varnothing$ can include at most one subset from $\{S_1^{(2)},S_2^{(2)},\ldots\}$, and therefore can be reduced to a chain of length at most $3$.
\end{proof}
Lemma \ref{lemma_separated} essentially states that, if the distances between components of $\supp A^{(2)}$ are larger than the diameters of the components of $\supp A^{(1)}$, then the components of $\supp A$ will be either located near those of $A^{(2)}$, absorbing some of $\supp A^{(1)}$, or be the remaining ``isolated'' components of $\supp A^{(1)}$.
%Suppose that $A^{(i)}$ supported on $\{S_1^{(i)},S_2^{(i)},\ldots\}$, for $i=1,\ldots,j$. Then the product
%\bee
\label{eq_product}
%A=A^{(j)}A^{(j-1)}\ldots A^{(1)}
%\ene
%has a similar structure that can be obtained as follows. Consider the graph with the set of vertices $\Z^d$, and there is an edge between $\bm$ and $\bn$ if for some $p,q,k,r$ we have $\bm\in S^{(p)}_{k}$, $\bn\in S^{(q)}_{r}$, where $S^{(p)}_{k}\cap S^{(q)}_{r}\neq \varnothing$. Note that this relation between $\bm$ and $\bn$ is symmetric, and therefore the graph is non-oriented. Then $A$ is supported on $\{S_1,S_2,\ldots\}$, where $S_k$ are connected components of that graph (in other words, one needs to consider the transitive closure of the above relation). More directly, the sets $S_k$ are equivalence classes of the following equivalence relation: $\bm\sim\bn$ if there exists a finite sequence $S_{k_1}^{(i_1)},\ldots,S_{k_p}^{(i_p)}$ with $\bm\in S_{k_1}^{(i_1)}$, $\bn\in S_{k_p}^{(i_p)}$, $S_{k_r}^{(i_r)}\cap S_{k_{r+1}}^{i_{(r+1)}}\neq \varnothing$ for $r=1,\ldots,p-1$. Without loss of generality, one can assume $i_{r+1}\neq i_r$. Note that this construction provides only an ``outer estimate'' on the support. Even if the original subsets $S_r^{(i)}$ are maximally refined for the operators $A^{(i)}$, the sets $S_r$ are not necessarily maximally refined, since there are many possibilities of cancellations that may reduce the actual support of the operator $A$.
\subsection{The construction and the main result}\label{subsection_construction}
For an operator $A$ on $\ell^2(\Z^d)$ and a finite subset $\Lambda\subset \Z^d$, we will use the notation
$$
A_{\Lambda}:=\one_{\Lambda}A \one_{\Lambda}|_{\ell^2(\Lambda)}
$$
to define the ``finite volume restriction'' of $A$. 
%Let $A=\begin{pmatrix} a&c\\c&b \end{pmatrix}$ be a symmetric matrix with real entries, with $|c|\ll |b-a|$ (the constriction will remain valid as long as $|c|<|b-a|$). Let
%$$
%V=\begin{pmatrix} 1&\frac{c}{b-a}\\
%-\frac{c}{b-a}&1.
%\end{pmatrix}
%$$
As a reminder, we start with a sequence of intervals
$$
\T=I_0\supset I_1\supset I_2\ldots,\quad I_s=(x_0-\beta^{(s)},x_0+\beta^{(s)})
$$
and an operator $H^{(0)}(x,x_0)=H(x)$. We will be constructing a sequence of unitary operators
$$
U^{(s)}(x,x_0),\quad s=1,2,3,\ldots,
$$
and 
$$
W^{(s)}(x,x_0)=\prod_{\ell=s}^{1}U^{(s)}(x,x_0)=U^{(s)}(x,x_0)U^{(s-1)}(x,x_0)\ldots U^{(1)}(x,x_0).
$$
The operators $U^{(s+1)}$ will be determined from $H^{(s)}$ defined by
\begin{multline*}
H^{(s+1)}(x,x_0)=U^{(s+1)}(x,x_0)^{-1}H^{(s)}(x,x_0)U^{(s+1)}(x,x_0)=\\=W^{(s+1)}(x,x_0)^{-1}H(x)W^{(s+1)}(x,x_0).	
\end{multline*}

The following is the construction of the diagonalizing operators $W^{(s)}(x,x_0)$, described step-by-step.

%While $A|_{\Lambda}$ is an operator on $\ell^2(\Lambda)$, we will sometimes assume that it is extended into $\ell^2(\Z^d)$ by zero outside of $\Lambda$ (that is, by the formula $A|_{\Lambda}\oplus\bze_{\Z^d\setminus\Lambda}$).
\begin{itemize}
	\item[(diag1)] Let $\bm\in \Z^d\setminus\{\bze\}$ and $H^{(s)}_{\{\bze,\bm\}}(x,x_0)$ be the $(2\times 2)$-block of the operator $H^{(s)}(x,x_0)$. Let $U^{(s+1)}_{\{\bze,\bm\}}(x,x_0)$ be the perturbative Jacobi rotation matrix of $H^{(s)}_{\{\bze,\bm\}}(x,x_0)$ on $I_{s+1}$ (note that this requires $H^{(s)}_{\{\bze,\bm\}}(x,x_0)$ to be diagonally dominant for $x\in I_s$, which will be verified later). Let also
	$$
	U^{(s+1),\bm}(x,x_0):=U^{(s+1)}_{\{\bze,\bm\}}(x,x_0)\oplus \one_{\Z^d\setminus\{\bze,\bm\}}
	$$
	be the extension of $U^{(s+1)}_{\{\bze,\bm\}}(x,x_0)$ by identity into $\ell^2(\Z^d)$. Clearly, $U^{(s+1),\bm}(x,x_0)$ is supported on $\{\bze,\bm\}$.
	\item[(diag2)] Let 
$$
	U_0^{(s+1)}(x,x_0):=\prod_{\bm\colon 0<|\bm|\le s+1}U^{(s+1),\bm}(x,x_0).
$$
We will assume that some choice of order of multiplication, independent of $x$ and $x_0$, is fixed for each $s$. For example, one can use lexicographic order. The main result does not depend on this choice.

\item[(diag3)] The matrix function $U_0^{(s+1)}(x,x_0)$ constructed above is not quasiperiodic. Consider the following quasiperiodic extension using the translation operator:
$$
U^{(s+1)}(x,x_0):=\prod_{\bn\in \Z^d}T^{-\bn}U_0^{(s+1)}(x+\omega\cdot\bn,x_0)T^{\bn},\quad (T^{\bn}f)(\bm)=f(\bn+\bm).
$$
Let
\bee
\label{eq_R_def}
{\mathrm{R}}_{\bn}^{(s+1)}(x,x_0):=\begin{cases}
\{\bm\in \Z^d\colon |\bm-\bn|\le s+1\},&x+\bn\cdot\omega\in I_{s+1};\\
\varnothing,&x+\bn\cdot\omega\in \T\setminus I_{s+1}.
\end{cases}
\ene
For sufficiently small $\beta$ (see Remark \ref{rem_small_parameters}), the sets $\rR_{\bn}^{(s)}(x,x_0)$ for different $\bn$ are disjoint. In this case we have that $U^{(s+1)}(x,x_0)$ is supported on
$$
\{{\mathrm{R}}_{\bn}^{(s+1)}(x,x_0)\colon x+\bn\cdot\omega\in I_{s+1}\},
$$
%where we are assuming that those $\bn$ with ${\mathrm{R}}_{\bn}^{(s)}(x)=\varnothing$ are removed from consideration. 
\item[(diag4)] Finally, as stated earlier, define
\bee
\label{eq_w_definition}
W^{(s+1)}(x,x_0):=\prod_{\ell=s+1}^{1} U^{(\ell)}(x,x_0)=U^{(s+1)}(x)W^{(s)}(x,x_0).
\ene
\end{itemize}
\begin{rem}
\label{rem_small_parameters}
Note that the operators \eqref{eq_w_definition} are completely determined by $d,f,\omega$, and $\beta$.

Their existence will rely several inductive estimates which will involve several parameters: $\alpha$, $\delta$, $\rho$, $\mu$, $\beta_0$, $\beta$, $\ep$, $\ep_0$, and $M$ (introduced in Subsection \ref{subsection_magnitude}). It is convenient to fix the specific order in which they are chosen in the proof.
\begin{enumerate}
	\item Initially, $\rho>0$, $\mu>0$, $0<\delta<1$, and $\alpha\ge 1$ are given.
	\item A small $\beta_0=\beta_0(d,\rho,\delta,\alpha,\mu)>0$ is chosen. Afterwards, one chooses $0<\beta<\beta_0$.
	\item It will be convenient to introduce an auxiliary parameter $\gamma:=(1+2\mu^{-1})\alpha$. Afterwards, a large $M=M(\beta,\gamma)>0$ is chosen. The parameter $M$ will be used starting from Subsection \ref{subsection_magnitude}.
	\item A small $\ep_0=\ep_0(d,\rho,\delta,M,\beta,\alpha,\mu)>0$ is chosen. The main results will hold for $0<\ep<\ep_0$.	
\end{enumerate}
\end{rem}
In the future, we will use the reference ``$\ep$ is small in the sense of Remark \ref{rem_small_parameters}'' as a shortcut for (1) -- (5), or ``$\beta$ is small in the sense of Remark \ref{rem_small_parameters}'' as a shortcut for (1) and (2).
\begin{rem}
\label{rem_interval_dependence}
In the future, multiple objects will depend on both on $x$ and $x_0$. In most cases, for every object considered at stage $s$, the dependence on $x_0$ is only through the choice of the intervals $I_1,\ldots,I_s$ as in \eqref{eq_u_actual_dependence}, \eqref{eq_interval_specific}:
$$
U^{(s)}(x,x_0)=U^{(s)}(I_1(x_0),I_2(x_0),\ldots,I_s(x_0),x);
$$
in particular, this will also hold for $W^{(s)}(x,x_0)$, $H^{(s)}(x,x_0)$, $f^{(s)}(x,x_0)=H^{(s)}(x,x_0;\bze,\bze)$, $R_{\bn}^{(s)}(x,x_0)$, and $\Lambda_{\bn}^{(s)}(x,x_0)$ (see Lemma \ref{lemma_W_support}).

We should also note that, in principle, one can consider $U^{(s)}(I_1,I_2,\ldots,I_s,x)$ for any collection of intervals, not necessarily of the specific form \eqref{eq_interval_specific} and not necessarily containing $x_0$, assuming that the diagonal dominance condition in (diag1) is satisfied at stages $1,\ldots,s$.
\end{rem}
\begin{thm}
\label{th_convergence}
Let $f$ be $\alpha$-H\"older monotone, $\omega\in \Omega_{\rho,\mu}$, and  $\ep$ is small in the sense of Remark $\ref{rem_small_parameters}$. Then
\begin{itemize}
\item[(conv1)]The matrices $H^{(s)}_{\{\bze,\bm\}}(x,x_0)$, considered in {\rm (diag1)}, are diagonally dominant for $x\in I_{s+1}$.
\item[(conv2)]
The operators $W^{(s)}(x,x_0)$ have finite range:
\bee
\label{eq_finite_range}
W^{(s)}(x,x_0;\bm,\bn)=0,\quad \text{for}\quad |\bm-\bn|\le s(1+\delta/10).
\ene
\item[(conv3)]
The operators $U^{(s)}(x,x_0)$ satisfy the operator norm bound $\|U^{(s)}(x,x_0)-\one\|\le \ep^{s(1-\delta/10)}$.
\item[(conv4)] As a consequence, the limit $W^{(\infty)}(x,x_0)=\lim\limits_{s\to +\infty}W^{(s)}(x,x_0)$ exists in the operator norm topology.
\item[(conv5)] The operators $W^{(\infty)}(x_0,x_0)$ provide a partial diagonalization for $H(x_0)$:
\bee
\label{eq_partial_diagonalization}
\l(\l(W^{(\infty)}(x_0,x_0)\r)^{-1}H(x_0)W^{(\infty)}(x_0,x_0)\r)(\bze,\bn)=0,\quad \text{for}\quad \bn\in \Z^d\setminus\{\bze\}.
\ene
\item[(conv6)]
As a consequence, $W^{(\infty)}(x_0,x_0)e_{\bze}$ is an eigenvector for $H(x_0)$. Denote the corresponding eigenvalue by $E(x_0)$. The function $x_0\mapsto E(x_0)$ is strictly increasing on $[0,1)$. For $\alpha=1$, it is Lipschitz monotone on $[0,1)$.
\end{itemize}
\end{thm}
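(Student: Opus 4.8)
The plan is to prove all six assertions \emph{simultaneously} by induction on $s$, with the heavy quantitative lifting delegated to Theorem~\ref{th_inductive_estimates} (and, for the crucial denominator bounds, Theorem~\ref{th_ind_4}). The inductive hypothesis at stage $s$ should record four things about $H^{(s)}(x,x_0)$: (i) smallness of the off-diagonal entries $H^{(s)}(x,x_0;\bze,\bm)$ on $I_s$, on the scale $\ep^{c|\bm|}$ for $|\bm|\lesssim s$ and $\ep^{cs}$ beyond; (ii) a quantitative lower bound for the gaps $|f^{(s)}(x,x_0)-H^{(s)}(x,x_0;\bm,\bm)|$ with $0<|\bm|\le s+1$ and $x\in I_{s+1}$; (iii) the support structure of $W^{(s)}$ from Lemma~\ref{lemma_W_support}; and (iv) an approximate monotonicity of $x\mapsto f^{(s)}(x,x_0)$ on $I_s$. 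The base case $s=0$ is immediate: $H^{(0)}=H$, the only nonzero off-diagonal entries are the hoppings $\ep$ at distance $1$, and for $|\bm|=1$ one has $|f(x)-f(x+\omega\cdot\bm)|\ge c(\rho,\mu,\alpha)>0$ by $\alpha$-H\"older monotonicity together with $\|\omega\cdot\bm\|\ge e^{-\rho}$, so (conv1) holds at $s=0$ once $\ep_0$ is small.

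For the inductive step I would first read off (conv1) at stage $s$ directly from the hypothesis: the blocks $H^{(s)}_{\{\bze,\bm\}}(x,x_0)$, $0<|\bm|\le s+1$, are diagonally dominant on $I_{s+1}$ because by (i) their off-diagonal entry is far smaller than the gap guaranteed by (ii). This licenses the construction (diag1)--(diag4), producing $U^{(s+1)}$; its Jacobi parameter is $\tau=H^{(s)}(x,x_0;\bze,\bm)/|f^{(s)}(x,x_0)-H^{(s)}(x,x_0;\bm,\bm)|$, and the numerator/denominator estimates yield $|\tau|\le\ep^{(s+1)(1-\delta/10)}$. Since the sets $\rR^{(s+1)}_{\bn}$ in (diag3) are pairwise disjoint for $\beta$ small, $U^{(s+1)}-\one$ is block-diagonal with blocks of that size, which is (conv3). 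For (conv2) I would feed the supports of $U^{(1)},\dots,U^{(s+1)}$---balls of radius $\le\ell$ sitting over the tiny intervals $I_\ell$---into the composition rule preceding Lemma~\ref{lemma_separated}: the super-geometric shrinking in \eqref{eq_betas_def} keeps the relevant chains short, so the accumulated range grows only linearly and \eqref{eq_finite_range} follows. It then remains to propagate (i)--(iv) to stage $s+1$; that the off-diagonal entries stay of the claimed size is a computation with \eqref{eq_u_structure} (the rotation removes $H^{(s)}(x,x_0;\bze,\bm)$ up to order $\tau^2$ and generates new entries only through the controlled geometry of the supports), which is the content of Theorem~\ref{th_inductive_estimates}. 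The genuinely delicate point, which I expect to be the main obstacle, is re-establishing (ii) at the next stage: the iteration by itself only preserves the soft approximate monotonicity (iv), whose accuracy on the tiny interval $I_{s+1}$ is not enough to control the ever-smaller denominators; this is closed by Theorem~\ref{th_ind_4}, which identifies $f^{(s)}(\cdot,x_0)$ on $I_{s+1}$ with a good approximation to a genuinely monotone branch of a finite-volume eigenvalue, whose separation from the remaining branches at $x_0\pm\omega\cdot\bm$ is governed by $\omega\in\Omega_{\rho,\mu}$ and the H\"older monotonicity of $f$.

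Granting the induction, (conv4) is immediate: $\|U^{(s)}-\one\|\le\ep^{s(1-\delta/10)}$ is summable, so $\|W^{(s+1)}-W^{(s)}\|=\|(U^{(s+1)}-\one)W^{(s)}\|\to0$ and $W^{(\infty)}(x,x_0)$ exists and is unitary. For (conv5) I would use that at $x=x_0$ the stage-$s$ rotation annihilates $H^{(s)}(x_0,x_0;\bze,\bm)$ for $0<|\bm|\le s$ up to higher-order terms that the later rotations only shrink further, so $H^{(s)}(x_0,x_0;\bze,\bm)\to0$ for each fixed $\bm\neq\bze$; combined with $H^{(s)}(x_0,x_0)=(W^{(s)})^{-1}H(x_0)W^{(s)}\to(W^{(\infty)})^{-1}H(x_0)W^{(\infty)}$ in operator norm, the limit has vanishing $(\bze,\bm)$ entries, i.e.\ \eqref{eq_partial_diagonalization}; hence $W^{(\infty)}(x_0,x_0)e_{\bze}$ is an eigenvector of $H(x_0)$ with eigenvalue $E(x_0)=\lim_s f^{(s)}(x_0,x_0)$. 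Finally, for (conv6): when $\alpha=1$ the local Lipschitz-monotonicity bound $f^{(s)}(y,x_0)-f^{(s)}(x,x_0)\ge c\,(y-x)$ on $I_s$, with $c$ uniform, is carried along by the induction and, via the correspondence with the finite-volume branches together with the fact that local Lipschitz monotonicity is global, upgrades to Lipschitz monotonicity of $E$; for $\alpha>1$ the same argument yields only non-strict monotonicity of $E$ on $[0,1)$, and strict monotonicity is then forced because a flat piece of $E$ would produce a jump in the integrated density of states, which is impossible by \cite{CS_ids,BK}. Apart from the denominator step flagged above, the remaining arguments are routine perturbation theory together with the combinatorial bookkeeping of Lemma~\ref{lemma_separated}.
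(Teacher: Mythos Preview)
Your overall architecture matches the paper's: (conv1) flows from Theorem~\ref{th_inductive_estimates}, (conv2) from Lemma~\ref{lemma_W_support}, (conv3)--(conv5) from summability of $\|U^{(s)}-\one\|$ and passing (ind1)--(ind2) to the limit, and the strict-monotonicity upgrade for $\alpha>1$ via \cite{CS_ids,BK} is exactly right. The gap is in your argument for the non-decreasing (respectively Lipschitz) part of (conv6). The bound you quote, $f^{(s)}(y,x_0)-f^{(s)}(x,x_0)\ge c(y-x)$ on $I_s$, is monotonicity in the \emph{first} variable for \emph{fixed} $x_0$; but $E(x_0)=\lim_s f^{(s)}(x_0,x_0)$ is the diagonal, and the schemes centered at distinct points $x_0,x_1$ produce genuinely different functions $f^{(s)}(\cdot,x_0)$ and $f^{(s)}(\cdot,x_1)$ that you have not compared. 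Invoking ``correspondence with finite-volume branches'' does not close this either: Theorem~\ref{th_ind_4} says $f^{(s+1)}(x,x_0)$ is close to $E_{\bze}(x)$ for $x\in I_{s+1}(x_0)$, but $E_{\bze}$ there is an eigenvalue of $H_{\Lambda_{s+1}(x_0)}$; moving the center to $x_1$ changes the domain to $\Lambda_{s+1}(x_1)$, so you would be comparing $E(x_0)$ and $E(x_1)$ to eigenvalue branches of \emph{different} finite-volume operators, with no a priori relation between them.

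The paper's fix is to run two schemes simultaneously, with parameters $\beta$ and $\tilde\beta=\beta/3$. For $|x_1-x_0|<\beta^{(s)}/3$ the intervals of the $\tilde\beta$-scheme centered at $x_1$ nest inside those of the $\beta$-scheme centered at $x_0$ at every stage $\le s$; by Lemma~\ref{lemma_W_support}(6) the extended region for the $\tilde\beta$-scheme then sits inside $\Lambda_{\bze}^{(s)}(x_0)$, so Corollary~\ref{cor_extended_ind4} applies and yields $|\tilde f^{(s)}(x_1,x_1)-E_{\bze}(x_1,x_0)|\le 3^d\ep^{s+2}\magn(s+2)$, with $E_{\bze}(\cdot,x_0)$ now a \emph{single} monotone branch on the \emph{fixed} domain $\Lambda_{\bze}^{(s)}(x_0)$. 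Since both schemes converge to the same eigenvalue $E$, this gives $|E(x_1)-E_{\bze}(x_1,x_0)|$ small uniformly for $x_1$ in a $\beta^{(s)}/3$-neighborhood of $x_0$, and the elementary patching Lemma~\ref{lemma_gamma_monotone} (using $\ep_j/\delta_j\to 0$) converts these local approximations-by-monotone-functions into global (Lipschitz) monotonicity of $E$. Without this two-parameter device your sketch has no mechanism to pass from monotonicity of $x\mapsto f^{(s)}(x,x_0)$ to monotonicity of $x_0\mapsto E(x_0)$.
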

\begin{rem}
\label{rem_structure_of_proof}
As stated, Property (conv1) is necessary for the operators $W^{(s)}$ to be well-defined, since it relies on a perturbative version of Jacobi rotation. Theorem \ref{th_inductive_estimates} in the next section, which is essentially a quantitative version of (conv1), is the central technical results of the paper. Property (conv2) is mostly a combinatorial fact statement and is proved in the next subsection (see Lemma \ref{lemma_W_support}). Properties (conv3) -- (conv6) are established in Section \ref{section_convergence} as consequences of the inductive estimates.
\end{rem}
%\begin{rem}
%\label{rem_steps_2}
%Note that the support of $U^{(j+1)}$, mentioned in (3), is not assumed to be smallest possible, since there may be some additional cancellations, perhaps only for special values of $x$ depending on $f$, leading to vanishing of some matrix element. One should consider it as an upper bound on the actual support (whatever its definition is), with respect to set-theoretic inclusion.
%\end{rem}
\subsection{Structure of the supports of the diagonalizing operators}\label{subsection_supports}
We will now discuss the structure of the supports of $W^{(s)}(x,x_0)$. Let $\rR_{\bn}^{(s)}(x,x_0)$ be defined as in \eqref{eq_R_def}. Define recursively
\begin{multline}
\label{eq_extended_regions}
\Lambda_{\bn}^{(s)}(x,x_0):=\\\begin{cases}
\rR_{\bn}^{(s)}(x,x_0),&s=1;\\
\rR_{\bn}^{(s)}(x,x_0)\cup\l(\bigcup\{\Lambda_{\bm}^{(\ell)}(x,x_0)\colon 1\le \ell\le s-1;\, \Lambda_{\bm}^{(\ell)}(x,x_0)\cap \rR_{\bn}^{(s)}(x,x_0)\neq \varnothing\}\r),&s\ge 2.
\end{cases}	
\end{multline}
We will refer to $\rR_{\bn}^{(s)}(x,x_0)$ as {\it basic regions at step} $s$, and $\Lambda_{\bn}^{(s)}(x,x_0)$ {\it extended regions at step} $s$. An extended region at step $s$ is, by definition, a union of a basic region at step $s$ with all extended regions at previous steps that intersect it. As described above, $U^{(s)}(x,x_0)$ is supported on the basic regions $\{\rR_{\bn}^{(s)}(x,x_0)\colon\bn\in \Z^d,\, x+\bn\cdot\omega\in I_s\}$. Our goal now is to estimate the support of $W^{(s)}(x,x_0)$ in terms of extended regions $\Lambda_{\bn}^{(s)}(x,x_0)$. Namely, we will show that $W^{(s)}(x,x_0)$ is supported on
\bee
\label{eq_W_support}
\{\Lambda_{\bn}^{(\ell)}(x,x_0)\colon 1\le \ell\le s,\,\bn\in \Z^d, \,x+\bn\cdot\omega\in I_\ell, \,\ell\text{ maximal possible} \}.
\ene
Note that every non-empty extended region $\Lambda_{\bn}^{(\ell)}(x,x_0)$ contains extended regions with the same center at all previous steps: that is, $\Lambda_{\bn}^{(\ell-1)}(x,x_0)$, $\Lambda_{\bn}^{(\ell-2)}(x,x_0)$, \ldots. Among those regions, one only considers the largest possible one, in order to avoid (trivial) overlap.
\begin{lem}
\label{lemma_W_support}Assume that $\beta$ is small in the sense of Remark $\ref{rem_small_parameters}$. Then
\begin{enumerate}
	\item Suppose that $\dist(R_{\bn}^{(s)}(x,x_0),R_{\bm}^{(\ell)}(x,x_0))\le 10s$, where $\bn\neq \bm$ and $s\ge \ell$. Then
\bee
\label{eq_intersection_smallregions}
s\ge \log(1/\beta)\ell^{1+\mu/2}.
\ene

\item Suppose that $\dist(R_{\bn}^{(s)}(x,x_0),\Lambda_{\bm}^{(\ell)}(x,x_0))\le 9\ell$, where $\bn\neq \bm$ and $s>\ell$. Then \eqref{eq_intersection_smallregions} also holds.
\item For every $s\ge 1$, we have that $\Lambda_{\bn}^{(s)}(x,x_0)$ is contained in a neighborhood of $R_{\bn}^{(s)}{(x,x_0)}$ of size $10\l(\frac{s}{\log(1/\beta)}\r)^{\frac{1}{1+\mu/2}}$.
\item 
As a consequence,
$$
\diam (\Lambda_{\bn}^{(s)}(x,x_0))\le 2s+20\l(\frac{s}{\log(1/\beta)}\r)^{\frac{1}{1+\mu/2}}\le 3s.
$$
\item The operators $A^{(2)}=U^{(s)}(x,x_0)$ and $A^{(1)}=W^{(s-1)}(x,x_0)$ satisfy the assumptions of Lemma $\ref{lemma_separated}$, with $S_{p}^{(2)}$ being the maximal extended regions and $S_{r}^{(1)}$ being the maximal basic regions. Therefore, $W^{(s)}(x,x_0)$ are supported on $\eqref{eq_W_support}$, for $s=1,2,\ldots$, thus establishing {\rm (conv2)} of Theorem $\ref{th_convergence}$.
\item Recall that the intervals $I_{\ell}$ are chosen as in \eqref{eq_interval_specific}. Suppose that $\tilde I_{\ell}\subset I_{\ell}$ is another collection of intervals. Denote by $\tilde\Lambda_{\bn}^{(s)}(x,x_0)$ the extended regions associated to the intervals $\tilde I_{\ell}$. Then $\tilde\Lambda_{\bn}^{(s)}(x,x_0)\subset \Lambda_{\bn}^{(s)}(x,x_0)$.
\end{enumerate}
\end{lem}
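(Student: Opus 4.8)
The six parts are interlocking, and I would establish them in the stated order; the only genuinely substantive input is part (1), and its conversion of the weak Diophantine condition into a combinatorial separation bound is the step I expect to be the main obstacle, with everything else being inductions built on top of it and on Lemma~\ref{lemma_separated}.

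\emph{Part (1).} If $\dist(R_{\bn}^{(s)}(x,x_0),R_{\bm}^{(\ell)}(x,x_0))\le 10s$ then both basic regions are nonempty, so $x+\bn\cdot\omega\in I_s$ and $x+\bm\cdot\omega\in I_\ell$. Since $t\mapsto t/\log(t+1)$ is increasing on $[1,\infty)$ we have $\beta^{(s)}\le\beta^{(\ell)}$, hence $I_s\subset I_\ell$, so both points lie in an arc of $\T$ of length $2\beta^{(\ell)}=2\beta^{\ell/\log(\ell+1)}$, whence $\|(\bn-\bm)\cdot\omega\|<2\beta^{\ell/\log(\ell+1)}$. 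On the other hand the two balls have radii $s\ge\ell$ and lie within distance $10s$, so $|\bn-\bm|\le 12s$, and $\omega\in\Omega_{\rho,\mu}$ forces $\|(\bn-\bm)\cdot\omega\|\ge e^{-\rho(12s)^{1/(1+\mu)}}$. Taking logarithms and using that $\beta$ is small in the sense of Remark~\ref{rem_small_parameters}, this yields $\rho(12s)^{1/(1+\mu)}>\frac{\ell\log(1/\beta)}{2\log(\ell+1)}$. Raising to the power $1+\mu$ and comparing with the target $s\ge\log(1/\beta)\ell^{1+\mu/2}$ reduces everything to the inequality $\frac{\ell^{\mu/2}}{(\log(\ell+1))^{1+\mu}}(\log(1/\beta))^{\mu}\ge C(\rho,\mu)$; since $\ell^{\mu/2}/(\log(\ell+1))^{1+\mu}$ is positive at $\ell=1$ and tends to $+\infty$ as $\ell\to\infty$, it is bounded below by a positive constant depending only on $\mu$, so the inequality holds once $\beta_0=\beta_0(d,\rho,\delta,\alpha,\mu)$ is chosen small — this is exactly where the admissible size of $\beta_0$ gets pinned down.

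\emph{Parts (2)--(4).} I would prove these together, by induction on $s$, in the cyclic order ``(3),(4) at steps $<s$ $\Rightarrow$ (2) at step $s$ $\Rightarrow$ (3) at step $s$ $\Rightarrow$ (4) at step $s$''. For (2): if $\dist(R_{\bn}^{(s)},\Lambda_{\bm}^{(\ell)})\le 9\ell$ with $\bn\ne\bm$ and $\ell<s$, then (3) at step $\ell$ places $\Lambda_{\bm}^{(\ell)}$ within $10(\ell/\log(1/\beta))^{1/(1+\mu/2)}\le\ell$ of $R_{\bm}^{(\ell)}$ (for $\beta$ small), so $\dist(R_{\bn}^{(s)},R_{\bm}^{(\ell)})\le 10\ell\le 10s$ and part (1) applies. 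For (3) at step $s$: by \eqref{eq_extended_regions}, $\Lambda_{\bn}^{(s)}$ is $R_{\bn}^{(s)}$ together with the $\Lambda_{\bm}^{(\ell)}$, $\ell<s$, meeting it; those with $\bm=\bn$ satisfy $R_{\bn}^{(\ell)}\subset R_{\bn}^{(s)}$ and lie within $10(\ell/\log(1/\beta))^{1/(1+\mu/2)}$ of $R_{\bn}^{(s)}$ by (3) at step $\ell$, while those with $\bm\ne\bn$ satisfy $\ell\le(s/\log(1/\beta))^{1/(1+\mu/2)}$ by (2) and hence $\diam\Lambda_{\bm}^{(\ell)}\le 3\ell$ by (4) at step $\ell$, so they lie within $3(s/\log(1/\beta))^{1/(1+\mu/2)}$ of $R_{\bn}^{(s)}$; in all cases within $10(s/\log(1/\beta))^{1/(1+\mu/2)}$. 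Part (4) is then immediate, since $2s+20(s/\log(1/\beta))^{1/(1+\mu/2)}\le 3s$ for $\beta$ small.

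\emph{Parts (5)--(6).} For (5) I would induct on $s$, assuming $W^{(s-1)}$ is supported on \eqref{eq_W_support} (with $s-1$). Writing $W^{(s)}=U^{(s)}W^{(s-1)}$, I apply Lemma~\ref{lemma_separated} with $A^{(2)}=U^{(s)}$ (supported on the basic regions $R_{\bn}^{(s)}$) and $A^{(1)}=W^{(s-1)}$ (supported, by the inductive hypothesis, on the maximal extended regions of steps $\le s-1$). Its hypothesis --- that no extended region $\Lambda_{\bm}^{(\ell)}$, $\ell\le s-1$, meets two distinct $R_{\bn}^{(s)},R_{\bn'}^{(s)}$ --- follows from part (1): such a coincidence would give $\dist(R_{\bn}^{(s)},R_{\bn'}^{(s)})\le\diam\Lambda_{\bm}^{(\ell)}\le 3(s-1)<10s$, whence part (1), applied with both regions at step $s$, would force the impossible $s\ge\log(1/\beta)s^{1+\mu/2}$. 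Lemma~\ref{lemma_separated} then describes $\supp W^{(s)}$ as each $R_{\bn}^{(s)}$ enlarged by the extended regions it meets, plus the untouched extended regions of earlier steps; by the recursion \eqref{eq_extended_regions} the enlarged sets are precisely the $\Lambda_{\bn}^{(s)}$, so $W^{(s)}$ is supported on \eqref{eq_W_support}, and since each component there has diameter $\le 3s$ by part (4), this gives the finite-range statement (conv2). Part (6) is a direct induction on $s$: from $\tilde I_\ell\subset I_\ell$ we get $\tilde R_{\bn}^{(s)}\subset R_{\bn}^{(s)}$ for every $s$ and $\bn$ (both equal the ball of radius $s$, or $\tilde R_{\bn}^{(s)}$ is empty), and then \eqref{eq_extended_regions} propagates the inclusion, since any $\tilde\Lambda_{\bm}^{(\ell)}$ meeting $\tilde R_{\bn}^{(s)}$ is, by the inductive hypothesis, contained in some $\Lambda_{\bm}^{(\ell)}$ meeting $R_{\bn}^{(s)}$, hence in $\Lambda_{\bn}^{(s)}$.
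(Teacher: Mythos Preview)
Your proof is correct and follows essentially the same approach as the paper's own proof: part (1) is the same Diophantine-to-separation conversion (you rearrange the inequality slightly differently but arrive at the same smallness condition on $\beta$), parts (2)--(4) are the same joint induction on $s$, part (5) is the same application of Lemma~\ref{lemma_separated} with separation of the $R_{\bn}^{(s)}$'s coming from part (1) at $\ell=s$, and part (6) is the same direct induction via \eqref{eq_extended_regions}. Your write-up is in fact slightly more explicit in places --- for instance, in part (3) you separately treat the $\bm=\bn$ and $\bm\neq\bn$ contributions, and in part (5) you spell out the verification of the hypothesis of Lemma~\ref{lemma_separated} --- but these are elaborations of the same argument rather than a different route.
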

\begin{proof}
Under the assumptions of (1), we have
$$
x+\bn\cdot\omega\in I_{s},\quad x+\bm\cdot\omega\in I_{\ell},\quad |\bn-\bm|\le 12s,
$$
which implies
\bee
\label{eq_s_large}
\|(\bn-\bm)\cdot\omega\|\le 2|I_\ell|=2\beta^{\frac{\ell}{\log(\ell+1)}}.
\ene
Since $\omega\in \Omega_{\rho,\mu}$, we have in view of \eqref{eq_liouville_def}
$$
e^{-\rho|\bn-\bm|^{\frac{1}{1+\mu}}}\le 2\beta^{\frac{\ell}{\ell+1}},
$$
or, equivalently,
$$
|\bn-\bm|\ge \ell^{1+\mu/2}\frac{\ell^{\mu/2}\rho^{-1-\mu} }{\log^{1+\mu}(\ell+1)}\log^{1+\mu}(1/\beta).
$$
Clearly, one can choose a small $\beta$ (in the sense of Remark \ref{rem_small_parameters}) so that
$$
\frac{\ell^{\mu/2}\rho^{-1-\mu} }{\log^{1+\mu}(\ell+1)}\log^{\mu}(1/\beta)\ge 120,\quad\forall\ell\in \N,
$$
which, combined with \eqref{eq_s_large}, implies \eqref{eq_intersection_smallregions}.

For (2) -- (4), we use induction in $s$. Suppose, (2) -- (4) have been established for $0\le s\le s_0$. Put $s=s_0+1$ in (2) and notice that, by the induction assumption (3), $\Lambda_{\bm}^{(\ell)}(x,x_0)$ is contained in a neighborhood of $\rR_{\bm}^{(\ell)}(x,x_0)$ of size $10\l(\frac{\ell}{\log(1/\beta)}\r)^{\frac{1}{1+\mu/2}}\ll \ell$, for $1\le \ell\le s_0$. Therefore, (2) for $s=s_0+1$ follows from (1) and the induction assumption (3) for $0\le s\le s_0$. Using the definition of $\Lambda_{\bn}^{(s)}(x,x_0)$, we have that $\Lambda_{\bn}^{(s_0+1)}(x,x_0)$ must be contained in a $3\ell$-neighborhood of $\rR_{\bn}^{(s_0+1)}(x,x_0)$, where $\ell$ must satisfy $\eqref{eq_intersection_smallregions}$:
$$
\log(1/\beta)\ell^{1+\mu/2}\le s_0+1,
$$
which implies (3) with $s=s_0+1$. Claims (4) and (5) follow from (1) at $\ell=s_0$ and the induction assumption regarding $W^{(s_0)}(x,x_0)$. 

From the definition of $R_{\bn}^{\ell}(x,x_0)$, inclusion of the intervals $\tilde I_\ell\subset I_{\ell}$ implies $\tilde R_{\bn}^{\ell}(x,x_0)\subset R_{\bn}^{\ell}(x,x_0)$, which leads to (6).
\end{proof}
\begin{rem}
\label{remark_variable_intervals}
The definitions of $\Lambda_{\bn}^{(s)}(x,x_0)$ are meaningful for small enough $\beta$ (in the sense of Remark \ref{rem_small_parameters}), since it is important that these regions are finite and disjoint. While their definition is motivated by ``combinatorial'' properties of $H(x)$ (that is, the fact that the operator only involves the nearest neighbor discrete Laplacian), it does not rely on the diagonal dominant properties of the matrices appearing in (diag1) and/or smallness of $\ep$.
\end{rem}
\begin{rem}
\label{rem_extended_boxes}
In the above results (1) -- (3), one can replace extended regions $\Lambda_{\bn}^{(s)}(x,x_0)$ by
\bee
\label{eq_extended_region_variable}
\Lambda_{\bn}^{(s)}(x_0):=\bigcup_{x\in I_s}\Lambda_{\bn}^{(s)}(x,x_0).
\ene
In particular, $\Lambda_{\bze}^{(s)}(x_0)$ is contained in a neighborhood of $\bze$ of size $s(1+\delta/10)$, assuming $\beta$ is small (as usual, in the sense of Remark \ref{rem_small_parameters}).
\end{rem}
%In other words, the $\level$ function grades the denominators appearing in the course of diagonalization by their smallness. Any denominator with level at least $1$ will be considered small. Most of the denominators are not small.

\section{Inductive estimates of the matrix elements}
The goal of this section is to prove a quantitative version of (conv1) in Theorem \ref{th_convergence}, that is, the inductive bounds (ind1) -- (ind4) stated in Theorem \ref{th_inductive_estimates} below. These statements require some additional notation and auxiliary estimates which will be introduced in Subsection \ref{subsection_magnitude}.

\subsection{Magnitude of the off-diagonal entries: absorption of small denominators and combinatorial factors}\label{subsection_magnitude}In order to maintain the diagonal dominance property for the matrix elements of $H^{(s)}(x,x_0)$, we will need two ingredients: an upper bound on the numerators in \eqref{eq_delta_definition} and a lower bound on the denominators. In order to measure the numerators, it will be convenient to use the following ``magnitude'' function:
$$
\magn(k):=M^{k-1/2}\beta^{\frac{12\gamma k}{\log(k+1)}},\quad k=1,2,3,\ldots,\quad \gamma:=(1+2\mu^{-1})\alpha.
$$
Here, $M=M(\beta)$ is large. By direct calculation, it is easy to verify the following sub-multiplicative property:
\begin{multline}
\label{eq_submultiplicative1}
\magn(k_1)\magn(k_2)=\\
=M^{-1/2}\beta^{12\gamma k_1\left(\frac{1}{\log(k_1+1)}-\frac{1}{\log(k_1+k_2+1)}\right)}\beta^{12\gamma k_2\left(\frac{1}{\log(k_2+1)}-\frac{1}{\log(k_1+k_2+1)}\right)}\magn(k_1+k_2),
\end{multline}
where both of the exponents are non-negative. Each factor in front on $\magn(k_1+k_2)$ are small.
\begin{rem}
The following elementary bound, which will be useful in Section 3.2, follows directly from the definition.
\bee
\label{eq_elementary_magn}
\magn(r+s)\le M^s\magn(r).
\ene
\end{rem}

In the inductive scheme in the next subsection, the presence of perturbative Jacobi rotations will lead to various expressions of the form
\bee
\label{eq_small_denominator_example}
\frac{|H^{(s)}(\bze,\bk)H^{(s)}(\bk,\bn)|}{f^{(s)}(x,x_0)-f^{(s)}(x+\bk\cdot\omega,x_0)},\quad\text{where}\quad f^{(s)}(x,x_0):=H^{(s)}(x,x_0;\bze,\bze).
\ene
By induction assumptions, we will have 
$$
H^{(s)}(x,x_0;\bm,\bn)\lesssim \ep^{|\bm-\bn|}\magn(|\bm-\bn|),\quad |f^{(s)}(x,x_0)-f^{(s)}(x+\bk\cdot\omega,x_0)|\gtrsim\|\bk\cdot\omega\|^{\alpha},
$$
see Theorem \ref{th_inductive_estimates} and Definition \ref{def_order} for more precise statements. The factors in front of $\magn(k_1+k_2)$ in \eqref{eq_submultiplicative1} will be used to absorb the denominator in \eqref{eq_small_denominator_example} and, ultimately, estimate the expressions of the form \eqref{eq_small_denominator_example} by $\ep^{|\bn|}\magn(|\bn|)$, under some assumptions on $\bk$ and $\bn$. 

Define
\bee
\label{eq_level_definition}
\level(\bn):=\max\{\ell\colon 2\|\bn\cdot\omega\|\le \beta^{(\ell)}\},
\ene
In other words, recalling the definition \eqref{eq_betas_def} of $\beta^{(\ell)}$,
\bee
\label{eq_level_equivalent}
\level(\bn)=\ell\quad\text{if and only if}\quad \frac12 \beta^{\frac{\ell+1}{\log(\ell+2)}}<\|\bn\cdot\omega\|\le \frac12 \beta^{\frac{\ell}{\log(\ell+1)}}.
\ene
\begin{lem}
\label{lemma_generalized_absorption}
Define $\magn(\cdot)$ as above, and suppose that
$$
|\bn|=k_2\ge k_1\ge \level(\bn).
$$
Assume that $\beta$ is small in the sense of Remark $\ref{rem_small_parameters}$. Then
\bee
\label{eq_generalized_absorption}
\frac{\magn(k_1)\magn(k_2)}{\|\bn\cdot\omega\|^{\alpha}}<\frac{\magn(k_1)\magn(k_2)}{\|\bn\cdot\omega\|^{2\alpha}}\le M^{-1/2}\beta^{\frac{\gamma k_1}{\log^2 (k_1+1)}}\magn(k_1+k_2).
\ene
\end{lem}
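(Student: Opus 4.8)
The plan is to reduce everything to the submultiplicative identity \eqref{eq_submultiplicative1} and then to absorb the small denominator $\|\bn\cdot\omega\|^{2\alpha}$ using the smallness of the prefactor attached to $\magn(k_1+k_2)$ in that identity, exploiting the hypothesis $k_1\ge \level(\bn)$. The first inequality in \eqref{eq_generalized_absorption} is trivial since $\|\bn\cdot\omega\|\le \frac12\beta^{\frac{\level(\bn)}{\log(\level(\bn)+1)}}$ is small (in the sense of Remark \ref{rem_small_parameters}), so $\|\bn\cdot\omega\|^{\alpha}>\|\bn\cdot\omega\|^{2\alpha}$; it suffices to prove the second one, which is the real content.

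First I would apply \eqref{eq_submultiplicative1} to write
\[
\frac{\magn(k_1)\magn(k_2)}{\|\bn\cdot\omega\|^{2\alpha}}
= M^{-1/2}\,\beta^{12\gamma k_1 A_1}\,\beta^{12\gamma k_2 A_2}\,\|\bn\cdot\omega\|^{-2\alpha}\,\magn(k_1+k_2),
\]
where $A_i=\frac{1}{\log(k_i+1)}-\frac{1}{\log(k_1+k_2+1)}\ge 0$. So the task is to show
\[
\beta^{12\gamma k_1 A_1}\,\beta^{12\gamma k_2 A_2}\,\|\bn\cdot\omega\|^{-2\alpha}\le \beta^{\frac{\gamma k_1}{\log^2(k_1+1)}}.
\]
It is enough to bound $\|\bn\cdot\omega\|^{-2\alpha}$ from above, and here I would use $k_1\ge \level(\bn)$: by \eqref{eq_level_equivalent} with $\ell=\level(\bn)$ we have $\|\bn\cdot\omega\|>\frac12\beta^{\frac{\level(\bn)+1}{\log(\level(\bn)+2)}}$, hence
\[
\|\bn\cdot\omega\|^{-2\alpha}<2^{2\alpha}\,\beta^{-\frac{2\alpha(\level(\bn)+1)}{\log(\level(\bn)+2)}}\le 2^{2\alpha}\,\beta^{-\frac{2\alpha(k_1+1)}{\log(k_1+2)}}
\]
since $t\mapsto t/\log(t+1)$ is increasing and $\level(\bn)\le k_1$. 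Absorbing the harmless constant $2^{2\alpha}$ into a factor of $\beta^{-1}$ (legitimate for $\beta$ small), it remains to check the exponent inequality
\[
12\gamma k_1 A_1 + 12\gamma k_2 A_2 - \frac{2\alpha(k_1+2)}{\log(k_1+2)} \ge \frac{\gamma k_1}{\log^2(k_1+1)}.
\]

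The main work — and the only genuinely delicate point — is this last elementary inequality, which I would establish by distinguishing the regimes $k_2\le 2k_1$ and $k_2>2k_1$. When $k_2\le 2k_1$ one has $k_1+k_2+1\le 4k_1$, so $A_1=\frac{1}{\log(k_1+1)}-\frac{1}{\log(k_1+k_2+1)}\gtrsim \frac{\log(k_2/k_1+\cdots)}{\log^2(k_1+1)}$; a short computation with the mean value theorem shows $12\gamma k_1 A_1$ already dominates the negative term plus the right side, using $\gamma=(1+2\mu^{-1})\alpha\ge 3\alpha$ (so the constant $12\gamma\ge 36\alpha$ beats the $2\alpha$ and leaves room for the $\gamma/\log^2$ term). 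When $k_2>2k_1$, one instead uses that $A_2\ge \frac{1}{\log(k_2+1)}-\frac{1}{\log(k_1+k_2+1)}$ is still $\gtrsim \frac{k_1}{k_2\log^2(k_2+1)}$, whence $12\gamma k_2 A_2\gtrsim \frac{\gamma k_1}{\log^2(k_2+1)}$, and separately $k_2>2k_1\ge 2\level(\bn)$ makes the lower bound $\|\bn\cdot\omega\|>\tfrac12\beta^{(\level(\bn)+1)/\log(\level(\bn)+2)}$ extremely favorable relative to $\beta^{k_2 A_2}$, so the negative term is easily swallowed. In both cases the inequality is comfortably true once $\beta$ is chosen small enough (equivalently $\log(1/\beta)$ large), which is exactly the content of "$\beta$ small in the sense of Remark \ref{rem_small_parameters}." I expect the bookkeeping of these two regimes, and in particular getting clean lower bounds on $A_1,A_2$ via the derivative of $1/\log(t+1)$, to be the bulk of the routine-but-careful part; everything else is a direct substitution.
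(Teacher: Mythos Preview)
Your reduction has a genuine gap: the exponent inequality you set out to prove,
\[
12\gamma k_1 A_1 + 12\gamma k_2 A_2 - \frac{2\alpha(k_1+2)}{\log(k_1+2)} \ge \frac{\gamma k_1}{\log^2(k_1+1)},
\]
is simply \emph{false} in general. Take any $\bn$ with $|\bn|$ large and $\|\bn\cdot\omega\|>\tfrac12\beta^{1/\log 2}$, so that $\level(\bn)=0$; then set $k_1=k_2=|\bn|=:k$. All hypotheses are satisfied, and $A_1=A_2=\frac{1}{\log(k+1)}-\frac{1}{\log(2k+1)}\sim\frac{\log 2}{\log^2 k}$, so the left side is $\sim\frac{24\gamma k\log 2}{\log^2 k}-\frac{2\alpha k}{\log k}$, which is negative for large $k$. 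The damage is done at the step
\[
\|\bn\cdot\omega\|^{-2\alpha}<2^{2\alpha}\beta^{-\frac{2\alpha(\level(\bn)+1)}{\log(\level(\bn)+2)}}\le 2^{2\alpha}\beta^{-\frac{2\alpha(k_1+1)}{\log(k_1+2)}}:
\]
the second inequality is correct but discards the crucial information that $\level(\bn)$ may be far smaller than $k_1$. Your later remark in the $k_2>2k_1$ case about going back to the $\level(\bn)$ bound does not repair this, because you have already committed to the displayed exponent inequality with $k_1$ in the negative term.

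The paper's argument keeps $\ell:=\level(\bn)$ throughout and, crucially, invokes the Diophantine hypothesis $\omega\in\Omega_{\rho,\mu}$ (which your proposal never uses) to obtain $k_2=|\bn|\ge \ell^{1+\mu/2}\log(1/\beta)$. This forces $\log(k_2+\ell+1)\ge\frac{2+\mu}{2}\log(\ell+1)$, hence
\[
\frac{1}{\log(\ell+1)}-\frac{1}{\log(k_2+\ell+1)}\ge\frac{\mu}{2+\mu}\cdot\frac{1}{\log(\ell+1)}.
\]
One then uses monotonicity of $m(\cdot,\cdot)$ to write $12\gamma\,m(k_1,k_2)\ge 10\gamma\,m(k_1,k_2)+2\gamma\,m(\ell,k_2)$; the first piece supplies the target $\beta^{\gamma k_1/\log^2(k_1+1)}$, while the second gives exactly $\beta^{2\alpha\ell/\log(\ell+1)}$ because $2\gamma\cdot\frac{\mu}{2+\mu}=2\alpha$ by the definition $\gamma=(1+2\mu^{-1})\alpha$. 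That identity is the reason for the specific choice of $\gamma$, and the Diophantine input is not optional.
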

\begin{proof}
The first inequality is trivially true (for $\omega$ with rationally independent components), it is included for convenience, since the estimates with both kinds of denominators will appear in the future.

Suppose that
\bee
\label{eq_k1k2_constraints}
0\neq|\bn|=k_2\ge k_1\ge \ell:=\level(\bn).
\ene
Recall that $\omega\in\Omega_{\rho,\mu}$: that is,
$$
\|\bn\cdot\omega\|\ge e^{-\rho|\bn|^{\frac{1}{1+\mu}}},\quad \forall\bn\in \Z^d\setminus\{\bze\}.
$$
As a consequence, assuming $\beta$ is small (similarly to the proof of Lemma \ref{lemma_W_support}),
\bee
\label{eq_level_distance}
k_2=|\bn|\ge \rho^{-(1+\mu)}\l(\frac{\ell}{\log(\ell+1)}\r)^{1+\mu}\log^{1+\mu}(1/\beta)\ge \ell^{1+\mu/2}\log(1/\beta).
\ene
%As a consequence,
%\bee
%\label{eq_level_distance2}
%\ell\le \l(\frac{|\bn|}{\log(1/\beta)}\r)^{\frac{1}{1+\mu}}.
%\ene
%Suppose now that that 
Let
$$
m(k_1,k_2):=\frac{k_1}{\log(k_1+1)}+\frac{k_2}{\log(k_2+1)}-\frac{ (k_1+k_2)}{\log(k_1+k_2+1)},
$$
so that \eqref{eq_submultiplicative1} can be rewritten as
$$
\magn(k_1)\magn(k_2)=M^{-1/2}\beta^{12\gamma m(k_1,k_2)}\magn(k_1+k_2).
$$
One can easily check that $m$ is strictly increasing in both of its arguments:
\bee
\label{eq_monotonicity_m}
m(k_1+a,k_2)>m(k_1,k_2),\quad m(k_1,k_2+a)>m(k_1,k_2),\quad \forall a>0.
\ene
Note that, for $k_2\ge k_1$, one can estimate the expression in the exponent of the right hand side of \eqref{eq_submultiplicative1} by
\bee
\label{eq_m_first_estimate}
\frac{1}{\log(k_1+1)}-\frac{1}{\log(k_1+k_2+1)}\ge \frac{1}{10\log^2(k_1+1)}.
\ene
In view of \eqref{eq_level_distance}, since $k_2\gg \ell^{1+\mu/2}$, we also have
\bee
\label{eq_m_second_estimate}
\frac{1}{\log(\ell+1)}-\frac{1}{\log(k_2+\ell+1)}\ge \l(\frac{\mu}{2+\mu}\r)\frac{1}{\log(\ell+1)}
\ene
Using \eqref{eq_monotonicity_m} and \eqref{eq_submultiplicative1} (in particular, $12m(k_1,k_2)\ge 10m(k_1,k_2)+2m(\ell,k_2)$), we obtain
\begin{multline*}
\magn(k_1)\magn(k_2)\le M^{-1/2}\beta^{10 \gamma m(k_1,k_2)}\beta^{2\gamma m(\ell,k_2)}\magn(k_1+k_2)\\ \le 
M^{-1/2}\beta^{\frac{\gamma k_1}{\log^2 (k_1+1)}}\beta^{\l(\frac{\mu}{2+\mu}\r)\frac{2\gamma \ell}{\log(\ell+1)}}\magn(k_1+k_2)\le 
M^{-1/2}\beta^{\frac{\gamma k_1}{\log^2 (k_1+1)}}\beta^{\frac{2\alpha \ell}{\log(\ell+1)}}\magn(k_1+k_2).
\end{multline*}
Finally, \eqref{eq_level_equivalent} implies
$$
\|\bn\cdot\omega\|^{2\alpha}\le \frac14\beta^{\frac{2\alpha \ell}{\log(\ell+1)}},
$$
which completes the proof.
\end{proof}
\begin{rem}
\label{rem_worst_level}
If $k_1'\leq k_1$ and $\ep$ is small in the sense of Remark \ref{rem_small_parameters}, then the following inequality easily follows from the definition of $\magn(\cdot)$:
\bee
\label{eq_worst_level}
\ep^{k_1+k_2}M^{-1/2}\beta^{\frac{\gamma k_1}{\log^2 (k_1+1)}}\magn(k_1+k_2)\le \ep^{k_1'+k_2} M^{-1/2}\beta^{\frac{\gamma k_1'}{\log^2 (k_1'+1)}}\magn(k_1'+k_2).
\ene
The presence of small $\ep$ in an appropriate power is important here, since $M$ is chosen to be large after choosing a small $\beta$. In view of Lemma \ref{lemma_generalized_absorption}, we can understand it in the following sense. Suppose that $\bn,\bn',k_1,k_2,k_1',\ell,\ell'$ are chosen as follows:
\bee
\label{eq_worst_denominator_assumptions}
\begin{gathered}
	|\bn|=k_2\ge k_1\ge \ell:=\level(\bn);\\
|\bn'|=k_2\ge k_1'\ge \ell':=\level(\bn');\\
k_1'\le k_1;\quad \ell'\le \ell.
\end{gathered}
\ene
Then, from Lemma \ref{lemma_generalized_absorption}, we have
\bee
\label{eq_test_denominator_1}
\frac{\magn(k_1)\magn(k_2)}{\|\bn\cdot\omega\|^{2\alpha}}\le  \ep^{k_1+k_2}M^{-1/2}\beta^{\frac{\gamma k_1}{\log^2 (k_1+1)}}\magn(k_1+k_2),
\ene
\bee
\label{eq_test_denominator_2}
\frac{\magn(k_1')\magn(k_2)}{\|\bn'\cdot\omega\|^{2\alpha}}\le  \ep^{k_1'+k_2}M^{-1/2}\beta^{\frac{\gamma k_1'}{\log^2 (k_1'+1)}}\magn(k_1'+k_2).
\ene
Due to \eqref{eq_worst_level}, both of these expressions can be estimated by the right hand side of \eqref{eq_test_denominator_2}. In other words, in comparing two expressions of the kind estimated by Lemma \ref{lemma_generalized_absorption} with the constraints \eqref{eq_worst_denominator_assumptions}, the ``worst`` case corresponds to the denominator of smaller level. This will help to reduce the number of cases later in the proof of Theorem \ref{th_inductive_estimates}. 
\end{rem}
\subsection{The inductive statements}\label{subsection_inductive} 
\begin{defn}
\label{def_order}
We will say that a matrix element $H^{(s)}(x,x_0;\bm,\bn)$ is {\it of order $r\geq1$} if the following estimate is true:
\bee
\label{eq_order_definition}
|H^{(s)}(x,x_0;\bm,\bn)|\le \begin{cases}
\frac{s+1}{s+2}\,\magn(r)\ep^r,&|\bm-\bn|\le s;\\
\frac{1}{|\bm-\bn|-s}\,\magn(r)\ep^r,&s+1\le |\bm-\bn|\le s(1+\delta/10).
\end{cases}
\ene	
\end{defn}
\begin{rem}
Note that the order of a matrix element is not uniquely defined (an element of order $r$ will also be an element of any smaller order, assuming $\ep\ll\beta$). As a consequence, this definition is merely a convenient way to state the inequality \eqref{eq_order_definition}. To emphasize that, we will sometimes use the words ``a matrix element is of order at least $r$'', with the same meaning. The conclusion of Lemma \ref{lemma_W_support} implies that $H^{(s)}(x,x_0;\bm,\bn)=0$ for $|\bm-\bn|>s(1+\delta/10)$, which is why that case is not considered in \eqref{eq_order_definition}.
\end{rem}
%Moreover, at any point of continuity, we also have
%$$
%\l|\frac{d}{dx}H_{\bm\bn}^{(j)}(x)\r|\le \frac{j-1}{j}\,\magn(s)\ep^s.
%$$
\begin{thm}
\label{th_inductive_estimates}
For $s\in \N$, define $H^{(s)}(x,x_0)$ as in Subsection $\ref{subsection_construction}$, with $f^{(s)}(x,x_0):=H^{(s)}(x,x_0;\bze,\bze)$. Assume that the parameters are chosen according to Remark $\ref{rem_small_parameters}$.
 Then
\begin{itemize}
\item[(ind1)] The order of $H^{(s)}(x,x_0;\bm,\bn)$ is (at least) $|\bm-\bn|$.
\item[(ind2)] Suppose that $x+\bm\cdot\omega\in I_k$ or $x+\bn\cdot\omega\in I_k$, $k\le s$, $\bm\neq \bn$. Then the order of $H^{(s)}(x,x_0;\bm,\bn)$ is (at least) $k+1$.

%Moreover, at any point of continuity, we also have
%$$
%\l|\frac{d}{dx}H_{\bm\bn}^{(j)}(x)\r|\le \frac{1}{|\bm-\bn|-j}\,\magn(s)\ep^s.
%$$

\item[(ind3)] 
We have
$$
|f^{(s)}(x,x_0)-f^{(s-1)}(x,x_0)|\le \ep^{2s-1} \magn(2s-1),\quad \forall x\in \R.
$$
\item[(ind4)] 
For $x\in I_s$, $y\in I_k$ or $x\in I_k$, $y\in I_s$, $0\le k\le s$, $0\le x<y<1$, we have
$$
f^{(s)}(y,x_0)-f^{(s)}(x,x_0)\ge 2^{1-\alpha}(y-x)^{\alpha}-4^d\l(1-\frac{1}{s-k+2}\r)\ep^{k}\magn(k).
$$
\end{itemize}
\end{thm}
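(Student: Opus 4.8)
\emph{The plan} is to prove (ind1)--(ind4) simultaneously by induction on $s$, in the order in which they are needed: assuming the four statements for $H^{(s-1)}$, I would first extract enough diagonal dominance to make $U^{(s)},H^{(s)},W^{(s)}$ well defined, then prove (ind1)--(ind3) for $H^{(s)}$ by analysing one block of Jacobi rotations, and finally obtain (ind4) for $H^{(s)}$ from the separate finite-volume input Theorem \ref{th_ind_4}. The base step $s=0$ is immediate: $H^{(0)}=H$ has off-diagonal entries only at $\ell^1$-distance $1$, equal to $\ep$, so (ind1)--(ind2) impose nothing in the range allowed by Definition \ref{def_order}, (ind3) is empty, and (ind4) at $s=0$ is exactly the $\alpha$-H\"older monotonicity of $f$ (note $2^{1-\alpha}\le 1$ and the subtracted term is nonnegative). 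By the covariance of the construction under the translations $T^{\bn}$ built into (diag3), it is enough throughout to estimate the $\bze$-row entries $H^{(s)}(x,x_0;\bze,\bn)$ and, for (ind3)--(ind4), the diagonal entry $f^{(s)}(x,x_0)$.

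\emph{Diagonal dominance.} Fix $0<|\bm|\le s$ and $x\in I_s$; I must control $\tau_{\bm}(x)$, the ratio of $H^{(s-1)}(x;\bze,\bm)$ to $|f^{(s-1)}(x+\bm\cdot\omega)-f^{(s-1)}(x)|$. Since $x\in I_s\subset I_{s-1}$, applying (ind2) for $H^{(s-1)}$ with index $\bze$ and $k=s-1$ shows the numerator is of order at least $s$, i.e.\ $\lesssim\magn(s)\ep^{s}$. For the denominator, $|\bm|\le s$ forces $\level(\bm)$ to be much smaller than $s$ (cf.\ \eqref{eq_level_distance}), so $x$ and $x+\bm\cdot\omega$ cannot both lie in $I_{s-1}$: one lies in $I_{s-1}$, the other in some $I_{k}$ with $k$ comparable to $\level(\bm)$, and (ind4) for $H^{(s-1)}$ then gives $|f^{(s-1)}(x+\bm\cdot\omega)-f^{(s-1)}(x)|\ge 2^{1-\alpha}\|\bm\cdot\omega\|^{\alpha}-4^{d}\ep^{k}\magn(k)\gtrsim\|\bm\cdot\omega\|^{\alpha}$, the last step because the $\beta$-exponent of $\ep^{k}\magn(k)$ — which carries the large factor $12\gamma$, $\gamma=(1+2\mu^{-1})\alpha$ — dominates that of $\|\bm\cdot\omega\|^{\alpha}$, comparable to $(\beta^{(\level(\bm))})^{\alpha}$ by \eqref{eq_level_equivalent}, while $\ep\ll M^{-1}$ controls the power of $M$. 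Hence $|\tau_{\bm}(x)|\lesssim\magn(s)\ep^{s}/\|\bm\cdot\omega\|^{\alpha}$, which (using $\omega\in\Omega_{\rho,\mu}$ and $\ep\ll M^{-1}$) is super-exponentially small in $s$; in particular $|\tau_{\bm}|<1/10$, so every rotation in (diag1) is legitimate, which is (conv1) of Theorem \ref{th_convergence} at this level.

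\emph{Propagating (ind1)--(ind3).} I would expand $H^{(s)}=(U_0^{(s)})^{-1}H^{(s-1)}U_0^{(s)}$ near $\bze$ (the translated pieces of $U^{(s)}$ act on disjoint basic regions by Lemma \ref{lemma_W_support} and are handled separately), using $U^{(s),\bm}=V(\tau_{\bm})+\tau_{\bm}^{2}W(\tau_{\bm})$ as in \eqref{eq_u_structure}. Expanding the product multilinearly writes each new entry $H^{(s)}(\bze,\bn)$, and $f^{(s)}-f^{(s-1)}$, as $H^{(s-1)}(\bze,\bn)$ (resp.\ $0$) plus a finite sum of terms, each a product of rotation angles $\tau_{\bk_i}$ and off-diagonal entries $H^{(s-1)}(\bk_i,\bk_{i+1})$ along a chain with $0<|\bk_i|\le s$ — that is, iterated instances of \eqref{eq_small_denominator_example}. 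Each term is bounded by repeated use of \eqref{eq_submultiplicative1} together with the absorption Lemma \ref{lemma_generalized_absorption}, which swallows the small denominators into the gain factors; Remark \ref{rem_worst_level} reduces each chain to the configuration whose denominator has the smallest level, keeping the casework finite. It then remains to check three bookkeeping points: (a) the cumulative power of $\ep$ never drops below $|\bn|$, and below $k+1$ once a chain endpoint has phase in $I_k$ — this is (ind1) and (ind2); (b) the prefactors $\tfrac{s+1}{s+2}$ and $\tfrac{1}{|\bm-\bn|-s}$ of Definition \ref{def_order} survive, the latter via a telescoping over chain lengths in the medium range $s<|\bm-\bn|\le s(1+\delta/10)$ where the range bound of Lemma \ref{lemma_W_support} is saturated; (c) the $O(s^{d})$ terms from the product over $\{0<|\bk|\le s\}$ are absorbed by the spare factor $M^{-1/2}\beta^{\ldots}$ (or spare power of $\ep$) in each term, using $\ep\ll M^{-1}$. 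For (ind3), the contributions to $f^{(s)}-f^{(s-1)}$ are quadratic in the already tiny $\tau_{\bm}$, hence $\lesssim\magn(s)^{2}\ep^{2s}/\|\bm\cdot\omega\|^{\alpha}\le M^{-1/2}\beta^{\ldots}\magn(2s)\ep^{2s}$ by Lemma \ref{lemma_generalized_absorption}, and summing over $0<|\bm|\le s$ gives $\le\ep^{2s-1}\magn(2s-1)$; off $I_s$ one has $U^{(s)}=\one$, so (ind3) is trivial there.

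\emph{Propagating (ind4); the main obstacle.} Statement (ind4) is \emph{not} produced by the rotation analysis — Jacobi rotations do not respect quantitative monotonicity — and I would instead deduce it from Theorem \ref{th_ind_4} (Section 4), which identifies $f^{(s)}(x,x_0)$ on $I_s$, up to an error of order $k$, with a distinguished \emph{monotone} branch $\lambda_{\bze}(\cdot)$ of an eigenvalue of the finite-volume operator on the extended region $\Lambda_{\bze}^{(s)}(x_0)$, the branch being singled out through its eigenvalue counting function by the combinatorial argument of \cite{Ilya}. The quantitative monotonicity $\lambda_{\bze}(y)-\lambda_{\bze}(x)\ge 2^{1-\alpha}(y-x)^{\alpha}$ for $x,y$ in a common small interval $I_k$ then follows from min--max and the $\alpha$-H\"older monotonicity of $f$, with $2^{1-\alpha}$ and the correction $4^{d}(\cdots)\ep^{k}\magn(k)$ absorbing the finitely many resonant sites $\bn\in\Lambda_{\bze}^{(s)}$ for which $x+\bn\cdot\omega$ and $y+\bn\cdot\omega$ straddle an integer (hence the restriction of $x,y$ to $I_k$). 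Since the partial diagonalization provided by (conv1) and (ind1)--(ind3) is exactly what is needed for this identification, while the weak monotonicity accumulated from earlier steps certifies that $f^{(s)}$ tracks the \emph{correct} branch rather than a neighbouring eigenvalue, the induction closes. I expect the main obstacle to be precisely this coupling: the denominator lower bounds driving (ind1)--(ind3) at step $s$ are themselves (ind4)-type statements, so the order of the implications inside one step must be respected carefully; and the genuinely hard point — the one that leaves the naive KAM scheme behind — is the verification that $f^{(s)}$ keeps tracking the right monotone finite-volume branch, which is the content of Theorem \ref{th_ind_4} and is treated separately in Section 4.
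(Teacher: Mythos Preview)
Your overall architecture matches the paper's: induction on $s$, diagonal dominance from (ind4) at the previous stage, (ind1)--(ind3) from expanding one block of Jacobi rotations with Lemma~\ref{lemma_generalized_absorption} absorbing the denominators, and (ind4) delegated to Theorem~\ref{th_ind_4}. The base case, the covariance reduction to the $\bze$-row, and the identification of (ind4) as the non-perturbative external input are all correct.

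There is, however, a genuine gap in your treatment of (ind2). You write the expansion as ``$H^{(s-1)}(\bze,\bn)$ plus a finite sum of terms'' and then assert in (a) that the cumulative power of $\ep$ never drops below $k+1$ once a chain endpoint has phase in $I_k$. But take $x\in I_s$, $0<|\bn|\le s$. Then (ind2) at stage $s$ requires the order of $H^{(s)}(\bze,\bn)$ to be $s+1$, while the surviving leading term $H^{(s-1)}(\bze,\bn)$ has order only $s$ by the inductive (ind2) with $k=s-1$. Your ``chain'' terms are all strictly smaller corrections; they cannot upgrade the order of a term you have explicitly kept. What the paper uses here---and what your sketch omits---is the \emph{exact cancellation} \eqref{eq_cancellation}: the linear commutator term $r_{\bn}(x)[M_{\bn},H^{(s-1)}](\bze,\bn)$ equals $H^{(s-1)}(\bze,\bn)$ by the very definition of $r_{\bn}$, so the leading entry is annihilated rather than estimated. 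The paper organizes this as a dichotomy between an ``adjustment scenario'' (order stays the same, use the wiggle room \eqref{eq_wiggle_room}) and a ``replacement scenario'' (order must increase by one, rely on the cancellation and then estimate the residual terms (C2)). Without singling out this cancellation, your step (a) is false as stated.

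A smaller point on (ind4): the paper splits the inductive step in two. For $x\in I_s$, $y\in I_k$ with $k\le s$ (the ``old'' range), (ind4) at stage $s+1$ follows directly from (ind4) at stage $s$ plus (ind3), absorbing the change of $f^{(s)}$ to $f^{(s+1)}$ into the slack $\tfrac{1}{s-k+2}-\tfrac{1}{s-k+3}$. Theorem~\ref{th_ind_4} is invoked \emph{only} for the genuinely new case $x,y\in I_{s+1}$, where it supplies an error $3^d\ep^{s+2}\magn(s+2)$, not ``of order $k$''. Your sketch merges the two cases and misstates the size of the finite-volume error; the argument still closes, but the logic is cleaner once you separate them.
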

%\begin{rem}
%Note that $I_k$ is considered as an interval on $\T$. As a consequence, in (ind4), one is essentially considering $x,y\in (I_k+\Z)\cap[0,1)$, which may have two connected components if $k\ge 1$. The conclusion of (ind4) is only meaningful if $x$ and $y$ are in the same component and becomes trivial if they are in different components due to $1$-monotonicity of the original function $f$ and (ind3), assuming that $\ep$ is small enough.
%\end{rem}
\begin{rem}
\label{rem_lipschitz}
It is easy to see that (ind4) on scale $s-1$ together with (ind1) -- (ind3) on the scale $s$ implies the following:
\bee
\label{eq_lower_lipschitz}
\frac{f^{(s)}(x,x_0)-f^{(s)}(x+\bn\cdot\omega,x_0)}{\mathrm{sgn}(\{x\}-\{x+\bn\cdot\omega\})\l|\{x\}-\{x+\bn\cdot\omega\}\r|^{\alpha}}\ge\frac{1}{2^{\alpha}},\quad \text{for}\quad x\in I_s,\quad \|\bn\cdot\omega\|\ge 10|I_s|.
\ene
In particular, one can always choose $\beta$ (within Remark \ref{rem_small_parameters}) so that the above estimate holds for $0<|\bn|\le 10s$.
\end{rem}
\subsection{Proof of (ind1) -- (ind3): general structure of the terms.}\label{subsection_general}Recall that the structure of $U^{(s+1)}(x,x_0)$, defined in (diag1) -- (diag4) of Subsection \ref{subsection_construction}, involves a sequence of perturbative Jacobi rotations applied to various $(2\times 2)$-blocks of $H^{(s)}(x,x_0)$. In this section, we will show that, for practical purposes, each perturbative Jacobi rotation matrix \eqref{eq_u_structure} can be replaced by the leading term in its asymptotic expansion. Let
$$
r_{\bk}(x)=r_{\bk}^{(s)}(x,x_0):=\frac{H^{(s)}(x,x_0;\bze,\bk)}{f^{(s)}(x,x_0)-f^{(s)}(x+\bk\cdot\omega,x_0)}.
$$
For simplicity of the notation, we will drop the dependence on $s$ and $x_0$ in the notation for $r_{\bk}(x)$. Recall that
$$
U^{(s+1),\bk}(x,x_0)=\begin{pmatrix}
	1&r_{\bk}(x)\\
	-r_{\bk}(x)&1
	\end{pmatrix}+\l(r_{\bk}(x)\r)^2W(r_{\bk}(x)),
$$
where $W(\cdot)$ is a real analytic function in its argument bounded by $10$ in a neighborhood of the origin. Let
$$
M_{\bk}\colon \ell^2(\Z^d)\to \ell^2(\Z^d),\quad M_{\bk}e_{\bn}:=\begin{cases}
-e_{\bk},&\bn=\bze;\\
e_{\bze},&\bn=\bk;\\
0,&\bn\in \Z^d\setminus\{\bze,\bk\}.
\end{cases}
$$
In other words, $M_{\bk}$ acts as $\begin{pmatrix}
0&1\\-1&0
\end{pmatrix}$ in the subspace spanned by $\{e_{\bze},e_{\bk}\}$, and by zero everywhere else on $\ell^2(\Z^d)$.
Then
\bee
\label{eq_expansion_1}
U^{(s+1),\bk}(x,x_0)=I+r_{\bk}(x)M_{\bk}+r_{\bk}^2(x) W(r_{\bk}(x)),
\ene
and
\bee
\label{eq_expansion_2}
(U^{(s+1),\bk}(x,x_0))^{-1}=(U^{(s+1),\bk}(x,x_0))^{T}=I-r_{\bk}(x)M_{\bk}+r_{\bk}^2(x) W(r_{\bk}(x))^T.
\ene
Recall that
$$
U^{(s+1)}_0(x,x_0)=\prod_{0<|\bk|\le s+1}U^{(s+1),\bk}(x,x_0),
$$
\begin{multline}
\label{eq_h_expression}	
H^{(s+1)}(x,x_0;\bm,\bn)=\sum\limits_{\br,\bl}U^{(s+1)}_0(x,x_0;\br,\bm)H^{(s)}(x,x_0;\br,\bl)U^{(s+1)}_0(x,x_0;\bl,\bn),\\ 0\le |\bm|\le 3s;\,\,0\le |\bn|\le 3s;\,\,x\in I_s.
\end{multline}
Note that the full expression for $H^{(s+1)}(x,x_0)$ involves 
\bee
\label{eq_quasiperiodic_product}
U^{(s+1)}(x,x_0):=\prod_{\bm\in \Z^d}T^{-\bm}U_0^{(s+1)}(x+\omega\cdot\bm,x_0)T^{\bm}
\ene
rather than the individual factor $U^{(s+1)}_0(x,x_0)$. However, for simplicity of calculations and without loss of generality, we restrict ourselves to a $3s$-neighborhood of the origin in $\Z^d$, in which case one only needs to consider $\bm=0$ in \eqref{eq_quasiperiodic_product}. In this case, assuming that $x\in I_s$ and in view of Lemma \ref{lemma_W_support}, the summation in \eqref{eq_h_expression} can be narrowed down to the range
$$
|\br|,|\bl|\le s(1+\delta/10).
$$
Estimates in the remaining region can be recovered using \eqref{eq_quasiperiodic_product}.

It will be convenient to introduce some additional structure in the inductive estimates of the off-diagonal entries of $H^{(s+1)}(x,x_0)$. Clearly, \eqref{eq_h_expression} can be interpreted as a replacement of every matrix element of $H^{(s)}(x,x_0,\bm,\bn)$ by a new expression. As $s$ changes to $s+1$, both (ind1) and (ind2) impose some conditions on the order of the matrix element $H^{(s+1)}(x,x_0,\bm,\bn)$. There are two scenarios: either the declared order of a matrix entry is determined by (ind1), in which case it stays the same (adjustment scenario), or it is determined by (ind2), in which case the order increases by $1$ (replacement scenario).
\begin{itemize}
	\item {\it Adjustment scenario.} In this case, the estimates will use the ``wiggle room'' in the definition \eqref{eq_order_definition} as $s$ changes to $s+1$. More precisely, we will show that
\begin{multline}
\label{eq_wiggle_room}
|H^{(s+1)}(x,x_0;\bm,\bn)-H^{(s)}(x,x_0;\bm,\bn)|\le \\\le \begin{cases}
\l(\frac{s+2}{s+3}-\frac{s+1}{s+2}\r)\magn(r)\ep^r,&|\bm-\bn|\le s+1;\\
\l(\frac{s+2}{s+3}-\frac12\r)\magn(r)\ep^r,&|\bm-\bn|=s+2;\\
\l(\frac{1}{|\bm-\bn|-(s+1)}-\frac{1}{|\bm-\bn|-s}\r)\magn(r)\ep^r,&s+3\le |\bm-\bn|\le (s+1)(1+\delta/10).
\end{cases}
\end{multline}
If $H^{(s)}(x,x_0;\bm,\bn)$ was of order $r$, the estimates \eqref{eq_wiggle_room} will ensure that it will remain of order $r$. Note that the case $|\bm-\bn|=s+2$ corresponds to the borderline situation in \eqref{eq_order_definition}, where the expression used in the definition of the order changes as $s$ changes to $s+1$.
	\item {\it Replacement scenario}: the declared order of the matrix element has increased (it is easy to see that the increase can only be by $1$). The improvement comes from the fact that the term $H^{(s)}(x,x_0;\bm,\bn)$ appears twice in the leading term expansion of \eqref{eq_h_expression}, with opposite signs, and cancels. In this case, the total contribution of the remaining terms of \eqref{eq_h_expression} is estimated by the corresponding value in \eqref{eq_order_definition}. It is easy to see that the latter will always be $\frac{s+2}{s+3}\magn(s+2)\ep^{s+2}$, since (ind2) at stage $s+1$ can only impose the order of a matrix element to be equal to $s+2$. We will often obtain a better estimate (say, $\frac{s+1}{s+2}\magn(s+2)\ep^{s+2}$) and use the remaining wiggle room \eqref{eq_wiggle_room} to absorb some less important terms.
\end{itemize}

\subsection{Proof of (ind1) -- (ind3): the quadratic terms}\label{subsection_quadratic}
Let us expand \eqref{eq_h_expression} using \eqref{eq_expansion_1} and \eqref{eq_expansion_2}. A term of this expansion will be called {\it quadratic} if it contains a factor of the form $r_{\bm}(x)r_{\bn}(x)$ (possibly with $\bm=\bn$).
\begin{thm}Suppose that $H^{(s)}(x,x_0;\bm,\bn)$ is of order $r$, $\bm\neq \bn$. Consider the sum of absolute values of all quadratic terms that contribute to that matrix element. Denote by $Q^{r,(s)}(x,x_0)$ the maximal possible value of such sums over all matrix elements of order $r$. Then
$$
|Q^{r,(s)}(x,x_0)|\le \frac{1}{s^{10}}\magn(r+s)\ep^{r+s}.
$$
As a consequence, at the induction step, the contribution of all quadratic terms can be absorbed into \eqref{eq_wiggle_room}.
\end{thm}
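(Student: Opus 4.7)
My plan is to expand \eqref{eq_h_expression} using the two-term decomposition \eqref{eq_expansion_1}--\eqref{eq_expansion_2} for each factor $U^{(s+1),\bk}$ comprising $U_0^{(s+1)}$, and then enumerate the resulting monomials that involve at least two $r$-factors: either two distinct $r_{\bk_1}M_{\bk_1}$, $r_{\bk_2}M_{\bk_2}$ (possibly both on the same side of $H^{(s)}$, possibly one on each side), or a single remainder factor $r_{\bk}^2 W(r_{\bk})$. Because each projector $M_{\bk}$ only couples $\bze$ with $\bk$, the indices $\br,\bl$ of the central matrix element $H^{(s)}(\br,\bl)$ and the outer indices $\bm,\bn$ are rigidly constrained to lie in $\{\bze,\bk_1,\bk_2\}$, leaving only a small finite number of admissible configurations for each pair $(\bk_1,\bk_2)$ with $0<|\bk_i|\le s+1$.

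The key observation is that, since $x\in I_{s+1}\subset I_s$ and $|\bk_i|\le s+1$, the hypothesis of (ind2) at step $s$ is satisfied for every off-diagonal entry $H^{(s)}(\bze,\bk_i)$ appearing in $r_{\bk_i}$, and likewise for the central factor $H^{(s)}(\br,\bl)$ whenever one of $\br,\bl$ equals $\bze$. Combining (ind2) with the lower bound of Remark \ref{rem_lipschitz}, I obtain
\[
|r_{\bk_i}(x)|\le \frac{\magn(\max(|\bk_i|,s+1))\,\ep^{\max(|\bk_i|,s+1)}}{\|\bk_i\cdot\omega\|^{\alpha}},\qquad |H^{(s)}(\br,\bl)|\le \magn(\max(|\br-\bl|,s+1))\,\ep^{\max(|\br-\bl|,s+1)}
\]
in every admissible configuration with an off-diagonal central factor; the configuration $\br=\bl=\bze$ (central factor the bounded diagonal $f^{(s)}$) is handled by the pair of enhanced $r$-bounds alone. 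The resulting \emph{enhanced} $\ep$-degree $D$ of each quadratic term is therefore at least $2(s+1)$, while the declared order $r$ of the matrix element satisfies $r\le (s+1)(1+\delta/10)$ by the finite-range statement of Lemma \ref{lemma_W_support}, so $D\ge r+s$ in every configuration.

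To conclude, I apply Lemma \ref{lemma_generalized_absorption} to absorb each of the two small denominators $\|\bk_i\cdot\omega\|^{\alpha}$ against a pair of $\magn$-factors, splitting and recombining via the submultiplicativity identity \eqref{eq_submultiplicative1} as needed; the hypothesis $|\bk_i|\ge\level(\bk_i)$ is guaranteed by $|\bk_i|\le s+1$ together with \eqref{eq_liouville_def}, exactly as in the proof of Lemma \ref{lemma_W_support}. Each absorption contributes a small factor $M^{-1/2}\beta^{\gamma k/\log^2(k+1)}$, and the elementary monotonicity \eqref{eq_elementary_magn} combined with $M\ep<1$ (from Remark \ref{rem_small_parameters}) converts $\magn(D)\ep^D$ into $\magn(r+s)\ep^{r+s}$. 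The polynomial count $(2s+3)^{2d}$ of admissible pairs $(\bk_1,\bk_2)$ is then absorbed by the overall $M^{-1}\beta^{\gamma/\log^2(s+2)}$-factor once $M$ is taken large in the sense of Remark \ref{rem_small_parameters}, producing the required $1/s^{10}$ decay. I expect the main obstacle to be the careful case bookkeeping (left-right versus same-side rotations, diagonal versus off-diagonal central factor, and especially the borderline regime $|\bk_i|\approx s+1$ where the enhanced-degree margin is thinnest) so that the inequality $D\ge r+s$ holds uniformly and the Lemma is applied with the ``worst-level'' choice of Remark \ref{rem_worst_level} in each configuration.
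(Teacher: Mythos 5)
Your overall strategy matches the paper's: expand via \eqref{eq_expansion_1}--\eqref{eq_expansion_2}, enumerate admissible configurations, bound each $r$-factor using (ind2), and absorb small denominators and combinatorial factors with Lemma~\ref{lemma_generalized_absorption}. The paper carries out the same enumeration concretely as $J_1,\dots,J_4$ and $J_1',\dots,J_4'$. However, two steps as written have genuine gaps.

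First, the inference ``$D\ge 2(s+1)$ and $r\le(s+1)(1+\delta/10)$, so $D\ge r+s$'' is false once $s>10/\delta$: it would require
\[
2(s+1)\ \ge\ (s+1)\bigl(1+\tfrac{\delta}{10}\bigr)+s,\qquad\text{i.e.}\qquad 1\ \ge\ \tfrac{(s+1)\delta}{10},
\]
and since $\delta$ is fixed while $s\to\infty$, this breaks down. The degree $2(s+1)$ coming from the pair of $r$-factors alone is not enough; the $\ep$-degree contributed by the central factor $H^{(s)}(\br,\bl)$ must be incorporated into $D$ and compared with $r$ configuration by configuration, and the degenerate configuration $\br=\bl=\bze$ (whose central factor $f^{(s)}$ carries no $\ep$-power) needs its own argument rather than a blanket lower bound on $D$. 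You flag the borderline regime $|\bk_i|\approx s+1$ in your last sentence, but precisely this bookkeeping is what the sketch omits, and it is where the paper instead verifies directly that each $J_j$-estimate matches or supersedes the power of $\ep$ demanded by (ind1)/(ind2) at step $s+1$.

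Second, you write the decay factor as $M^{-1}\beta^{\gamma/\log^2(s+2)}$, but the exponent of $\beta$ must carry a factor of $s$: applying Lemma~\ref{lemma_generalized_absorption} with $k_1=\min\{p_1,p_2\}\ge s$ (where $p_j=\max\{|\bk_j|,s\}$) yields $\beta^{\gamma s/\log^2(s+1)}$, which is super-polynomially small in $s$ and is exactly what the paper uses to swallow both the combinatorial factor $(6s)^{2d}$ and the prefactor $s^{10}$. As written, $\beta^{\gamma/\log^2(s+2)}\to 1$ as $s\to\infty$, and $M$ is chosen once and for all before $\ep$ (Remark~\ref{rem_small_parameters}), so neither $M^{-1}$ nor the $s$-independent $\beta$-power can supply the needed polynomial decay in $s$.
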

\begin{proof}
%Here, we discuss the inductive assumptions (ind1) and (ind2). So, we assume that $\bm\not=\bn$. The contribution of the quadratic terms to the diagonal which is important for (ind3) and (ind4) is identical in nature (and smaller in value) to the contribution of the non-quadratic terms. Those are discussed at the end of Section 2.4.

If both $\bn,\ \bm\not=\bze$, then the quadratic contributions to $ H^{(s+1)}(x,x_0;\bm,\bn)$ come from the cases 1) $\br=\bm$, $\bl=\bn$, 2) $\br\not=\bm$, $\bl=\bn$, 3) $\br=\bm$, $\bl\not=\bn$, 4) $\br\not=\bm$, $\bl\not=\bn$. It follows from the structure of the matrices $U^{(s)}(x,x_0)$ that the corresponding contributions can be estimated by, respectively,

$$10|H^{(s)}(x,x_0;\bm,\bn)|(|r_{\bm}(x)|^2+|r_{\bn}(x)|^2)\prod_{0<|\bk|<s+1}(1+2|r_{\bk}(x)|)=:J_1,$$ 
$$10|H^{(s)}(x,x_0;\br,\bn)||r_{\bm}(x)||r_{\br}(x)|\prod_{0<|\bk|<s+1}(1+2|r_{\bk}(x)|)=:J_2,$$ 
$$10|H^{(s)}(x,x_0;\bm,\bl)||r_{\bn}(x)||r_{\bl}(x)|\prod_{0<|\bk|<s+1}(1+2|r_{\bk}(x)|)=:J_3,$$
$$10|H^{(s)}(x,x_0,\br,\bl)||r_{\bm}(x)||r_{\br}(x)||r_{\bn}(x)||r_{\bl}(x)|\prod_{0<|\bk|<s+1}(1+2|r_{\bk}(x)|)=:J_4.$$ 

The cases $\bm\not=\bn=\bze$ and $\bn\not=\bm=\bze$ are equivalent, so we consider the former. Again, the contributions come from the cases above with $\bn=\bze$ and are estimated as

$$10|H^{(s)}(x,x_0;\bm,\bze)|(|r_{\bm}(x)|^2+\sum_{0<|\bk|<s+1}|r_{\bk}(x)|^2)\prod_{0<|\bk|<s+1}(1+2|r_{\bk}(x)|)=:J_1',$$ 
$$10|H^{(s)}(x,x_0;\br,\bze)||r_{\bm}(x)||r_{\br}(x)|\prod_{0<|\bk|<s+1}(1+2|r_{\bk}(x)|)=:J_2',$$ 
$$10|H^{(s)}(x,x_0;\bm,\bl)||r_{\bl}(x)|^2\prod_{0<|\bk|<s+1}(1+2|r_{\bk}(x)|)=:J_3',$$ 
$$10|H^{(s)}(x,x_0;\br,\bl)||r_{\bm}(x)||r_{\br}(x)||r_{\bl}(x)|\prod_{0<|\bk|<s+1}(1+2|r_{\bk}(x)|)=:J_4'.$$ 

First, notice that by the induction assumption we have $|r_{\bk}(x)|<\varepsilon^{|\bk|/2}$ (for $\varepsilon$ small enough in the sense of Remark \ref{rem_small_parameters}), and thus $\prod_{0<|\bk|<s+1}(1+2|r_{\bk}(x)|)\leq2$. Next, we use \eqref{eq_generalized_absorption} and induction assumptions to estimate
$$
|r_{\bk_1}(x)r_{\bk_2}(x)|\leq \varepsilon^{p_1+p_2}M^{-1/2}\beta^{\frac{\gamma s}{\log^2(s+1)}}\magn(p_1+p_2),
$$
where $p_j=\max\{|\bk_j|,s\}$. Now, the estimates for all $J_j, J_j'$ follow after one (or several, depending on the case) more applications of  \eqref{eq_generalized_absorption}. It is easy to see that the corresponding estimates match or supersede the powers of $\varepsilon$ required in (ind1) or (ind2) for $s+1$. It is also important to note that factor $M^{-1/2}\beta^{\frac{\gamma s}{\log^2(s+1)}}$ not only takes care of the coefficients in \eqref{eq_order_definition} but also suppresses the combinatorial factor coming from the total number of terms (estimated by $(6s)^{2d}$).
\end{proof}

\subsection{Proof of (ind1) -- (ind3): non-quadratic terms.}\label{subsection_non_quadratic}Throughout the remaining part of the proof, we assume that $x\in I_{s+1}$. The expression for $H^{(s+1)}(x,x_0)$ can be rewritten as follows:
$$
H^{(s+1)}(x,x_0)=H^{(s)}(x,x_0)-\sum_{0<|\bk|\le s+1}r_{\bk}(x)[M_{\bk},H^{(s)}(x,x_0)]+\textrm{quadratic terms},
$$
where again it is only considered for the matrix elements with $0\le |\bm|,|\bn|\le s(1+\delta/10)$.

From the definition of $r_{\bk}(x)$, we have the following for $0<|\bk|\le s+1$:
\bee
\begin{split}
\label{eq_cancellation}
H^{(s)}(x,x_0;\bze,\bk)=r_{\bk}(x)[M_{\bk},H^{(s)}(x,x_0)](\bze,\bk);\\
H^{(s)}(x,x_0;\bk,\bze)=r_{\bk}(x)[M_{\bk},H^{(s)}(x,x_0)](\bk,\bze).
\end{split}
\ene
As a consequence, the original entry $H^{(s)}(x,x_0;\bze,\bk)$ is cancelled by the corresponding entry of the commutator term with the same $\bk$.
Let us now consider the non-quadratic terms contributing to $H^{(s+1)}(x,x_0;\bm,\bn)$. They can be of the form
$$
r_{\bk}(x)M_{\bk}(\bm,\br)H^{(s)}(x,x_0;\br,\bn),\quad \textrm {or}\quad -r_{\bk}H^{(s)}(x,x_0;\bm,\br)M_{\bk}(\br,\bn).
$$
Since $H^{(s+1)}(x,x_0)$ is symmetric and the inductive statements are symmetric with respect to interchanging of $\bm$ and $\bn$, it is sufficient to consider the terms of the first kind. In these terms the structure of $M_{\bk}$ implies that the only two possible choices are $\bm=\bze$, $\br=\bk$ or $\bm=\bk$, $\br=\bze$ (with the corresponding entry being equal to $\pm 1$). As a consequence, it is sufficient to consider the following four types of contributions (recall that we always have $0<|\bk|\le s+1$):
\begin{enumerate}
	\item[(C1)] $-r_{\bk}H^{(s)}(x,x_0;\bze,\bn)$ to $H^{(s+1)}(x,x_0;\bk,\bn)$, $\bk\neq \bn$. In this case, we have only one term for each matrix element of $H^{(s+1)}(x,x_0)$ (adjustment scenario).
	\item[(C2)] $r_{\bk}H^{(s)}(x,x_0;\bk,\bn)$ to $H^{(s+1)}(x,x_0;\bze,\bn)$, $|\bn|\le s+1$. These terms are indexed by $\bk$ with $0<|\bk|\le s+1$, $\bk\neq \bn$. The term with $\bk=\bn$ cancels  the original term $H^{(s)}(x,x_0;\bze,\bn)$ and is therefore not considered (replacement scenario).
	\item[(C3)] $r_{\bk}(x)H^{(s)}(x,x_0;\bk,\bn)$ to $H^{(s+1)}(x,x_0;\bze,\bn)$, $|\bn|\ge s+2$. Similarly to the previous case, these terms are also indexed by $\bk$ with $0<|\bk|\le s+1$. However, this range no longer includes $\bk=\bn$, and therefore no cancellation is taking place (adjustment scenario).
	\item[(C4)]Contributions to the diagonal terms: $-r_{\bk}H^{(s)}(x,x_0;\bze,\bk)$ to $H^{(s+1)}(x,x_0;\bk,\bk)$ or $r_{\bk}H^{(s)}(x,x_0;\bk,\bze)$ to $H^{(s+1)}(x,x_0;\bze,\bze)$. In both cases, this will affect (ind3).
\end{enumerate}
\vskip 2mm
{\bf Type (C1)}. For every $\bk$ under consideration, estimate \eqref{eq_level_distance} (see also the proof of Lemma \ref{lemma_W_support} (1)) implies
\bee
\label{eq_levelk}
\level(\bk)\le \l(\frac{s}{\log(1/\beta)}\r)^{\frac{1}{1+\mu/2}}\ll s,
\ene
assuming that $\beta$ is small in the sense of Remark \ref{rem_small_parameters}. Let
$$
p:=\max\{|\bk|,s+1\},\quad  q:=\max\{|\bn|,s+1\}.
$$
The entries $H(x,x_0;\bze,\bk)$ and $H(x,x_0;\bze,\bn)$ are of order $p$, $q$, respectively. Since both are at least $s+1$ and in view of \eqref{eq_levelk}, one can estimate the contribution (C1) 
using Lemma \ref{lemma_generalized_absorption} by
\begin{multline}
\label{eq_case1_type1}
|r_{\bk}H^{(s)}(x,x_0;\bze,\bn)|\le \frac{|H^{(s)}(x,x_0;\bze,\bn)H^{(s)}(x,x_0;\bze,\bk)|}{f^{(s)}(x,x_0)-f^{(s)}(x+\bk\cdot\omega,x_0)}\le 2\frac{\ep^{p+q}\magn(p)\magn(q)}{\|\bk\cdot\omega\|^{\alpha}}\le \\ \le \ep^{p+q}M^{-1/2}\beta^{\frac{\gamma s}{\log^2 (s+1)}}\magn(p+q)\ll \ep^{p+q}\frac{1}{(s+2)^{10}}\magn(p+q).
\end{multline}
This can be absorbed into \eqref{eq_wiggle_room}.
\vskip 5mm
%As a consequence, it is sufficient to show that the sum of all remaining corrections of type (t2) does not exceed, say, $\frac12 \ep^{s+1} \magn(s+1)$. Note that we no longer need a small pre-factor, since, unlike the previous case, the original matrix element is eliminated.
{\noindent \bf Type (C2)}. Recall that $x\in I_{s+1}$. As mentioned above, in this case cancellation \eqref{eq_cancellation} eliminates the term $H^{(s)}(x,x_0;\bze,\bn)$.  From the definition of $\level(\bk)$, it is easy to see that $x+\bk\cdot\omega\in I_{\level(\bk)}$, assuming that $\level(\bk)\le s$ (which will always be true provided $|\bk|\le 10s$). As a consequence, every entry $H^{(s)}(x,x_0;\bk,\bn)$ is of the order at least $\max\{|\bn-\bk|,\level(\bk)+1\}$. Recall also that, by induction assumption, the order of $H^{(s)}(x,x_0;\bze,\bk)$ is at least $s+1\gg \level(\bk)$. This justifies using Lemma \ref{lemma_generalized_absorption}. In view of Remark \ref{rem_worst_level}, one can without loss of generality assume $\level(\bk)=0$. As a consequence, the contribution of the terms of type  (C2) with $0<|\bn|\le s+1$ can be estimated by
\begin{multline*}
\sum\limits_{\bk\colon 0<|\bk|\le s+1;\,\bk\neq \bn}|r_{\bk}H^{(s)}(x,x_0;\bk,\bn)|\le \frac{|H^{(s)}(x,x_0;\bze,\bk)H^{(s)}(x,x_0;\bk,\bn)|}{f^{(s)}(x,x_0)-f^{(s)}(x+\bk\cdot\omega,x_0)}\\ \le
\sum\limits_{\bk\colon 0<|\bk|\le s+1;\,\bk\neq \bn}\ep^{s+1+|\bk-\bn|}M^{-1/2}\beta^{\frac{\alpha\min\{|\bk-\bn|,s+1\}}{\log^2 (\min\{|\bk-\bn|,s+1\}+1)}}\magn(s+1+
|\bk-\bn|)\\
\le \sum_{r=0}^{s}(2r+1)^d\,\ep^{s+r+2}M^{-1/2}\beta^{\frac{\gamma r}{\log^2(r+1)}}\magn(s+r+2)\\+ \sum_{r=s+1}^{3s}(2r+1)^d\,\ep^{s+r+2}M^{-1/2}\beta^{\frac{\gamma (s+1)}{\log^2(s+2)}}\magn(s+r+2)\\ \le 
2\ep^{s+2}\magn(s+2) M^{-1/2} \sum_{r=0}^{s}(2r+1)^d\,\ep^{r}\beta^{\frac{\gamma r}{\log^2(r+1)}}M^r\le \frac12 \ep^{s+2}\magn(s+2).
\end{multline*}
The first inequality follows from Lemma \ref{lemma_generalized_absorption}, in view of Remark \ref{rem_worst_level}. In the second inequality, we assumed $|\bk-\bn|=r+1$, and the factor $(2r+1)^d$ is an estimate on the number of possible values of $\bk$ for a given $r$. It is easy to see that the second sum is dominated by the first sum assuming the smallness of the parameters. Afterwards, we applied \eqref{eq_elementary_magn} 
$$
\magn(r+s)\le M^s\magn(r)
$$
to convert $\magn(s+r+2)$ into $\magn(s+2)$, which is placed outside of the summation together with $\ep^{s+2}$. It is clear that last sum, together with the factor $2$ in front, can be absorbed into $M^{-1/2}$, since the choice of $\ep$ comes after $M$ and $\beta$.

\vskip 5mm

{\noindent  \bf Type (C3)}. In this case, one needs to estimate the same expression as in the previous case. Since $|\bk|\le s+1$, $|\bn|\ge s+2$, we no longer have a term with $\bk=\bn$, and therefore no cancellation of the kind \eqref{eq_cancellation} takes place. In other words, we are dealing with the adjustment scenario, and the bound of the kind $\frac12 \ep^{s+2}\magn(s+2)$ on the contribution of Type (C2) is no longer sufficient. Nevertheless, the estimates can be started in the same way. In view of Remark \ref{rem_worst_level}, assume $\level(\bk)=0$. We have
\begin{multline}
\label{eq_sum_case22}
\sum\limits_{\bk\colon 0<|\bk|\le s+1}|r_{\bk}H^{(s)}(x,x_0;\bk,\bn)|\le \\ 
\le \sum\limits_{\bk\colon 0<|\bk|\le s+1}\ep^{\max\{s+1,|\bk|\}+|\bk-\bn|}M^{-1/2}\beta^{\frac{\gamma\min\{s+1,|\bk-\bn|\}}{\log^2 (\min\{s+1,|\bk-\bn|\}+1)}}\magn(\max\{s+1,|\bk|\}+|\bk-\bn|).
\end{multline}
The lowest possible value of $\max\{s+1,|\bk|\}+|\bk-\bn|$ is $|\bn|$, which matches the declared order of this contribution. It remains to show that the powers of $\beta$ and the factor $M^{-1/2}$, together with the combinatorial factors and lower order terms, can be absorbed into the pre-factors in the second and third cases of \eqref{eq_wiggle_room}.

Similarly to the calculation for Type (C2), denote 
$$
r:=|\bk-\bn|-|\bn|+\max\{s+1,|\bk|\}.
$$
Clearly, we have $0\le r\le 3s$. For a fixed value of $r$, there are at most $(2(|\bn|-s-2+r)+1)^d$ choices of the value of $\bk$ in the summation \eqref{eq_sum_case22}. Therefore, we can continue the estimates
\begin{multline*}
\sum\limits_{\bk\colon 0<|\bk|\le s+1}|r_{\bk}H^{(s)}(x,x_0;\bk,\bn)|\le\\ 
\le \ep^{|\bn|}M^{-1/2}\sum_{0\le r\le 3s}(2(|\bn|-s-2+r)+1)^d  \ep^r \beta^{\frac{ \gamma(\min\{s+1,|\bn|-s-2+r\})}{\log^2 (\min\{s+1,|\bn|-s-2+r\}+1)}}\magn(|\bn|+r)\\
\le \ep^{|\bn|}\magn(|\bn|)M^{-1/2}\sum_{0\le r\le 3s}(2(|\bn|-s-2+r)+1)^d \ep^r  \beta^{\frac{ \gamma(\min\{s+1,|\bn|-s-2+r\})}{\log^2(\min\{s+1,|\bn|-s-2+r\}+1)}}M^r\\
\le\ep^{|\bn|}\magn(|\bn|)M^{-1/2}\sum_{t=|\bn|-s-2}^{4s}(2t+1)^d \beta^{\frac{\gamma \min\{t,s+1\}}{\log^2 (\min\{t,s+1\}+1)}}(\ep M)^{t-(|\bn|-s-2)}\\
\le\ep^{|\bn|}\magn(|\bn|)M^{-1/2}\beta^{\frac{1}{10}\gamma(|\bn|-s-2)^{1/2}},
\end{multline*}
assuming that $\ep M\ll 1$. Clearly, if $M$ is large enough, one can guarantee
$$
M^{-1/2}\beta^{\frac{1}{10}\gamma(|\bn|-s-2)^{1/2}}\ll \begin{cases}
\frac{s+2}{s+3}-\frac12,&|\bn|=s+2;\\
\frac{1}{|\bn|-s-1}-\frac{1}{|\bn|-s},& |\bn|\ge s+3,
\end{cases}
$$
which matches the estimate in \eqref{eq_wiggle_room} (recall that $\ep$ is chosen after $M$).

{\noindent  \bf Type (C4)}. These estimates are similar to those of type (C1). In both cases, the expressions produce an estimate of the kind $\ep^{2s}\magn(2s)$ (or less), with a factor $M^{-1/2}\beta^{\frac{\gamma s}{\log^2 (s+1)}}$ sufficient to absorb any combinatorial factor. This also applies to the contribution of quadratic terms.

\section{Monotonicity of finite volume eigenvalues and the proof of (ind4)}\label{subsection_monotonicity}
While the diagonal entries of the original operator $H(x)=H^{(0)}(x)$ are monotone on suitably chosen intervals, the partial diagonalization procedure would, in general, destroy exact monotonicity and, at the very least, would not provide any means of controlling it directly. Approximate monotonicity, such as in (ind4), appears to be satisfied on the first several scales, but the perturbation estimates alone do not seem to provide any insight why the accuracy of this property on smaller intervals would improve as $s$ becomes larger. It seems that the solution to this issue should rely on some properties of the operator that are external in relation to the partial diagonalization procedure.

Our approach to addressing this issue is based on the following observation. Due to Lemma \ref{lemma_W_support}, the partial diagonalization transformation is a finite range operator, and therefore, for a given value of $s$, it does not distinguish between infinite volume and finite volume operators considered on appropriately chosen domains. On the other hand, the operators in finite volume have a natural choice of monotone eigenvalue branches that retain the same monotonicity and ordering properties as the corresponding diagonal entries (see \cite[Remark 4.5]{Ilya}). Due to smallness of the off-diagonal entries $H(x,x_0;\bze,\bn)$, every diagonal entry of the partial diagonalization procedure is close to some eigenvalue of some finite volume operator. If one establishes that the said eigenvalue is from the ``correct'' branch, that would provide the missing  intrinsic monotonicity property. This follows from the fact that (ind4) on the previous scale implies a ``coarse'' ordering of the remaining eigenvalues, sufficient to verify that the eigenvalue under consideration is located at the correct position.

The following finite volume eigenvalue parametrization is, essentially, established in \cite{Ilya}, see also \cite[Proposition 3.1]{JK2}.
\begin{prop}
\label{prop_box_eigenvalues}
Let $\Lambda\subset \Z^d$ be a finite subset, and suppose that $f\colon \R\to [-\infty,+\infty)$ is $1$-periodic  and non-decreasing on $[0,1)$. For every $x\in \R$, there exists a unique numeration of the eigenvalues of $H_{\Lambda}(x)$ by the lattice points $\bn\in \Lambda$ such that 
\bee
\label{eq_eigenvalue_ordering}
E_{\bm}(x)\le E_{\bn}(x)\quad\text{if and only if}\quad\{x+\bm\cdot\omega\}\le \{x+\bn\cdot\omega\}.
\ene
Each eigenvalue $E_{\bn}(x)$, considered as a function of $x$, is non-decreasing on $[-\bn\cdot\omega,-\bn\cdot\omega+1)$. If $f$ is strictly increasing on $[0,1)$ or Lipschitz monotone, then the same holds for $E_{\bn}$.
\end{prop}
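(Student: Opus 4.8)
The plan is to follow the combinatorial scheme of \cite{Ilya} (see also \cite[Proposition 3.1]{JK2}). Since $\omega\in\Omega_{\rho,\mu}$ has rationally independent components, for a fixed $x$ the numbers $\{x+\bm\cdot\omega\}$, $\bm\in\Lambda$, are pairwise distinct; let $j_x(\bn)\in\{1,\dots,|\Lambda|\}$ be the rank of $\{x+\bn\cdot\omega\}$ in this list and define $E_{\bn}(x)$ to be the $j_x(\bn)$-th smallest eigenvalue of $H_{\Lambda}(x)$ (counted with multiplicity, with the convention that $-\infty$ is the smallest, so as to cover the case $f(0)=-\infty$ of Remark \ref{rem_infinite}). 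By construction this numeration makes $E_{\bn}(x)$ non-decreasing in the rank of $\{x+\bn\cdot\omega\}$, which is precisely \eqref{eq_eigenvalue_ordering}, and since the right-hand side of \eqref{eq_eigenvalue_ordering} is a total order on $\Lambda$ the numeration is the only one with this property.

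The substance is the monotonicity of each $E_{\bn}$ on $J_{\bn}:=[-\bn\cdot\omega,-\bn\cdot\omega+1)$. Call $x\in J_{\bn}$ an \emph{event} if $x+\bm\cdot\omega\in\Z$ for some $\bm\in\Lambda$; by rational independence there are finitely many events, each attached to a single $\bm$, and the one attached to $\bm=\bn$ is the left endpoint $-\bn\cdot\omega$, hence harmlessly excluded from the rest of $J_{\bn}$. First I would observe that between two consecutive events each fractional part $\{x+\bm\cdot\omega\}$ moves along one monotone branch of the sawtooth, so all diagonal entries $f(x+\bm\cdot\omega)$ are simultaneously non-decreasing; thus $H_{\Lambda}(x_2)-H_{\Lambda}(x_1)\ge0$ for $x_1<x_2$ between consecutive events, and by the min–max principle every eigenvalue branch of $H_{\Lambda}(x)$, in particular $E_{\bn}$, is non-decreasing there. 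Next I would look at a single event $x_*$ coming from some $\bm_0\ne\bn$: as $x$ crosses $x_*$, the diagonal entry $f(x+\bm_0\cdot\omega)$ drops by some $c\ge0$ (the jump of the periodic $f$) while all other entries stay continuous, so $H_{\Lambda}$ undergoes a non-positive rank-one jump in the $\bm_0$-th diagonal place; simultaneously $\{x+\bm_0\cdot\omega\}$ jumps from just below $1$ (rank $|\Lambda|$) to just above $0$ (rank $1$), so the rank of $\bn$ increases by exactly one, from some $k$ to $k+1$. The interlacing inequality for a rank-one perturbation then gives $\lambda_k\big(H_{\Lambda}(x_*^-)\big)\le\lambda_{k+1}\big(H_{\Lambda}(x_*^+)\big)$, i.e. $E_{\bn}$ does not drop at $x_*$. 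Chaining these two facts across the finitely many events yields that $E_{\bn}$ is non-decreasing on $J_{\bn}$.

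For the last sentence I would refine the same two steps. If $f$ is strictly increasing on $[0,1)$, then between consecutive events $H_{\Lambda}(x_2)-H_{\Lambda}(x_1)$ is strictly positive definite, so each eigenvalue branch is strictly increasing there; as $J_{\bn}$ is a finite union of such subintervals and $E_{\bn}$ does not decrease at the events, $E_{\bn}$ is strictly increasing on $J_{\bn}$. If $f$ is Lipschitz monotone, then between consecutive events $H_{\Lambda}(x_2)-H_{\Lambda}(x_1)\ge(x_2-x_1)\one$, hence $\lambda_j(H_{\Lambda}(x_2))-\lambda_j(H_{\Lambda}(x_1))\ge x_2-x_1$ for every branch; summing over the subintervals and again using that $E_{\bn}$ does not decrease at the events gives $E_{\bn}(y)-E_{\bn}(x)\ge y-x$ on $J_{\bn}$. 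I expect the only delicate bookkeeping to be the sign at an event — checking that the rank of $\bn$ can only increase, and by exactly one, and that this matches the direction in which the non-positive rank-one perturbation moves the eigenvalues — together with the (important but easy) remark that the event attached to $\bm=\bn$ sits at the endpoint of $J_{\bn}$ and so never interferes.
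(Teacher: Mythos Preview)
The paper does not supply its own proof of this proposition; it is stated as a known result, citing \cite{Ilya} and \cite[Proposition~3.1]{JK2}. Your proposal is a faithful and correct reconstruction of that argument: rank-based numeration gives existence and uniqueness of \eqref{eq_eigenvalue_ordering}, min--max gives monotonicity of each ordered branch between events, and the rank-one interlacing inequality matches the rank jump of $\bn$ across each event. The refinements for strictly increasing and Lipschitz monotone $f$ are also handled correctly.

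One small imprecision worth noting: at an event $x_*$ attached to $\bm_0$, you write that ``all other entries stay continuous''. This need not be literally true, since $f$ may have (upward) jumps inside $(0,1)$. The fix is immediate: for $\bm\neq\bm_0$ the fractional part $\{x+\bm\cdot\omega\}$ is continuous at $x_*$, so the corresponding diagonal entry can only jump \emph{up}, and one has $H_{\Lambda}(x_*^+)=H_{\Lambda}(x_*^-)+D-c\,e_{\bm_0}e_{\bm_0}^{*}$ with $D\ge0$ diagonal and $c\ge0$. Then
\[
\lambda_{k+1}\bigl(H_{\Lambda}(x_*^+)\bigr)\ \ge\ \lambda_{k+1}\bigl(H_{\Lambda}(x_*^-)-c\,e_{\bm_0}e_{\bm_0}^{*}\bigr)\ \ge\ \lambda_{k}\bigl(H_{\Lambda}(x_*^-)\bigr),
\]
the first inequality by monotonicity under $D\ge0$ and the second by interlacing. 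This is exactly the inequality you need, so the argument is unaffected.
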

\begin{rem}
For $\ep=0$, these eigenvalue branches become the diagonal entries: $E_{\bn}(x)=f(x+\bn\cdot\omega)$. It is important for the proof that the off-diagonal entries of $H_{\Lambda}(x)$ are constant in $x$.
\end{rem}
\begin{rem}
\label{rem_locally_monotone}
If $f$ is $\alpha$-H\"older monotone, then the functions $E_{\bn}(x)$ are locally $\alpha$-H\"older monotone (that is, between the discontinuity points $\{-\bn\cdot\omega\}$, $\bn\in \Lambda$). Unlike Lipschitz monotonicity, $\alpha$-H\"older monotonicity with $\alpha>1$ on a large interval is not necessarily inherited from the same property on smaller intervals. However, using the inequality
$$
x^{\alpha}+y^{\alpha}\ge 2^{1-\alpha}(x+y)^{\alpha},\quad x,y\ge 0,\quad \alpha\ge 1,
$$
one easily show the following: suppose that $f$ is $\alpha$-H\"older monotone and $(a,b)$ is an interval containing at most one discontinuity point $\{-\bn\cdot\omega\}$, $\bn\in \Lambda$. Suppose that $E_{\bn}$ is an eigenvalue branch, considered in Proposition \ref{prop_box_eigenvalues}, that is non-decreasing on $(a,b)$. Then
$$
E_\bn(y)-E_\bn(x)\ge 2^{1-\alpha}(y-x)^{\alpha},\quad \text{for}\quad a<x<y<b.
$$
\end{rem}
Recall that (ind4) at the stage $s$ was stated as
\bee
\label{eq_ind4_s}
f^{(s)}(y)-f^{(s)}(x)\ge 2^{1-\alpha}(y-x)^{\alpha}-4^d\l(1-\frac{1}{s-k+2}\r)\ep^{k}\magn(k),
\ene
assuming (without loss of generality)
\bee
\label{eq_same_assumptions}
\quad x\in I_s,\,y\in I_k,\quad 0\le k\le  s, \quad 0\le x<y<1.
\ene
We would like to establish the same estimate at the stage $s+1$, assuming that (ind1) -- (ind3) are already established at that stage. By combining \eqref{eq_ind4_s} with (ind3) at the stage $s+1$, it is easy to see that
\bee
\label{eq_ind4_s2}
f^{(s+1)}(y)-f^{(s+1)}(x)\ge  2^{1-\alpha}(y-x)^{\alpha}-4^d\l(1-\frac{1}{(s+1)-k+2}\r)\ep^{k}\magn(k)
\ene
under the same assumptions \eqref{eq_same_assumptions} since, by (ind3) that is already established at the stage $s+1$, we have
$$
|f^{(s+1)}(x,x_0)-f^{(s)}(x,x_0)|\le \ep^{2s+1}\magn(2s+1)\ll \l(\frac{1}{s-k+2}-\frac{1}{(s+1)-k+2}\r)\ep^{k}\magn(k).
$$
Inequality \eqref{eq_ind4_s2} is ``almost'' (ind4) at the stage $s+1$, except the assumptions \eqref{eq_same_assumptions} do not include the case $x,y\in I_{s+1}$, where a (crucial) improvement in the power of $\ep$ in the right hand side is needed. In other words, it remains to establish
\bee
\label{eq_ind4_to_establish}
f^{(s+1)}(y,x_0)-f^{(s+1)}(x,x_0)\ge 2^{1-\alpha}(y-x)^{\alpha}-\frac{4^d}{2} \ep^{s+1}\magn(s+1);\quad x,y\in I_{s+1};\,\,0\le x<y<1.
\ene
Let $x\in I_{s+1}$. From Remark \ref{rem_lipschitz}, we have
\bee
\label{eq_no_smallden2}
\frac{f^{(s+1)}(x+\bn\cdot\omega,x_0)-f^{(s+1)}(x,x_0)}{{\mathrm{sgn}(\{x\}-\{x+\bn\cdot\omega\})\l|\{x\}-\{x+\bn\cdot\omega\}\r|^{\alpha}}}\ge \frac{1}{2^{\alpha}}, \quad \text{for}\quad |\bn|\le 3s,\,x\in I_s.
\ene
Following Remark \ref{rem_extended_boxes}, denote
$$
\Lambda_s=\Lambda_s(x_0):=\bigcup_{x\in I_s}\Lambda^{(s)}_{\bze}(x,x_0).
$$
The following is the main result of this section.
\begin{thm}
\label{th_ind_4}
Assume that {\rm (ind1) -- (ind3)} are established at the stages up to $s+1$, and {\rm (ind4)} up to stage $s$. Define $\Lambda_{s+1}$ as above and $E_{\bze}(x)=E_{\bze}(x,x_0)$ associated to $H_{\Lambda_{s+1}}(x)$ as in Proposition $\ref{prop_box_eigenvalues}$. Then
\bee
\label{eq_distance_eigenvalue}
|f^{(s+1)}(x,x_0)-E_{\bze}(x)|\le 3^d \ep^{s+2}\magn(s+2),\quad\text{for}\quad x\in I_{s+1}. 
\ene
\end{thm}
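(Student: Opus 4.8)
Here is how I would approach Theorem \ref{th_ind_4}. The plan is to pass to a finite box, recognise $f^{(s+1)}(x,x_0)$ as the Rayleigh quotient of $H_{\Lambda_{s+1}}(x)$ at the almost–eigenvector produced by the scheme, and then locate which eigenvalue of $H_{\Lambda_{s+1}}(x)$ this number is near, using the coarse monotonicity carried by (ind4) at stage $s$ to confirm that it is the branch $E_{\bze}$.

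\textbf{Step 1 (reduction to finite volume).} Fix $x\in I_{s+1}$. By Lemma \ref{lemma_W_support} the component of $\supp W^{(s+1)}(x,x_0)$ containing $\bze$ is the extended region $\Lambda_{\bze}^{(s+1)}(x,x_0)$, and by Remark \ref{rem_extended_boxes} it is contained in $\Lambda_{s+1}$. Since $H(x)$ has range $1$, the $(\bze,\bze)$–entry of $(W^{(s+1)}(x,x_0))^{-1}H(x)W^{(s+1)}(x,x_0)$ involves only matrix elements of $H(x)$ and of $W^{(s+1)}(x,x_0)$ sitting inside $\Lambda_{\bze}^{(s+1)}(x,x_0)$; running the same scheme on $H_{\Lambda_{s+1}}(x)$ (legitimate by Remark \ref{rem_interval_dependence}, the diagonal dominance at all stages being inherited from the infinite–volume statement via finite range) reproduces exactly these matrix elements. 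Writing $W$ for the resulting diagonalizing operator on $\ell^2(\Lambda_{s+1})$ — orthogonal, being a product of perturbative Jacobi rotations — and $\psi:=We_{\bze}$, we get $f^{(s+1)}(x,x_0)=\langle H_{\Lambda_{s+1}}(x)\psi,\psi\rangle$ and $\sigma(H_{\Lambda_{s+1}}(x))=\{E_{\bn}(x,x_0):\bn\in\Lambda_{s+1}\}$. Moreover $H_{\Lambda_{s+1}}(x)\psi=W\bigl(f^{(s+1)}(x,x_0)e_{\bze}+\sum_{\bn\ne\bze}H^{(s+1)}(x,x_0;\bn,\bze)e_{\bn}\bigr)$, so by (ind2) at stage $s+1$ (applicable because $x\in I_{s+1}$) every off–diagonal term here has order at least $s+2$ in the sense of \eqref{eq_order_definition}, and summing over $\bn\in\Lambda_{s+1}$, a set of diameter $\le 3(s+1)$ (Lemma \ref{lemma_W_support}), we obtain
\[
\eta:=\bigl\|\bigl(H_{\Lambda_{s+1}}(x)-f^{(s+1)}(x,x_0)\bigr)\psi\bigr\|\le P(s)\,\ep^{s+2}\magn(s+2)
\]
with $P$ a fixed polynomial; in particular $\dist\bigl(f^{(s+1)}(x,x_0),\sigma(H_{\Lambda_{s+1}}(x))\bigr)\le\eta$.

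\textbf{Step 2 (separation and identification of the branch).} Put $g:=2^{-\alpha}\min\{\|\bn\cdot\omega\|^{\alpha}:\bn\in\Lambda_{s+1}\setminus\{\bze\}\}$. Since $|\bn|\le 3(s+1)$ on $\Lambda_{s+1}$, the condition $\omega\in\Omega_{\rho,\mu}$ gives $g\ge 2^{-\alpha}e^{-\alpha\rho(6(s+1))^{1/(1+\mu)}}$; because $(s+1)^{1/(1+\mu)}\ll(s+1)/\log(s+1)$ one has $g\gg|I_{s+1}|$, though possibly $g\ll\ep$ once $s$ is large. The operator $H^{(s+1)}_{\Lambda_{s+1}}(x)=W^{-1}H_{\Lambda_{s+1}}(x)W$ has diagonal entries $f^{(s+1)}(x+\bn\cdot\omega,x_0)$ (covariance of the construction, up to finite–range effects near $\partial\Lambda_{s+1}$ which involve only sites with $|\bn|$ of order $s$ and are therefore harmless below) and off–diagonal entries bounded by $\magn(|\bn-\bm|)\ep^{|\bn-\bm|}$ via (ind1)–(ind2); whenever $\|\bn\cdot\omega\|$ is small the Diophantine condition forces $|\bn|$ large, and then the absorption mechanism of Lemma \ref{lemma_generalized_absorption} gives $\magn(|\bn|)\ep^{|\bn|}\ll\|\bn\cdot\omega\|^{\alpha}$. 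Combining this with \eqref{eq_lower_lipschitz} at stage $s+1$ — available since (ind4) at stage $s$ and (ind1)–(ind3) at stage $s+1$ are assumed — which shows that every diagonal entry $f^{(s+1)}(x+\bn\cdot\omega,x_0)$ with $\bn\ne\bze$ lies at distance $\ge 2g$ from $f^{(s+1)}(x,x_0)$, a resolvent argument that treats the off–diagonal part perturbatively while using, for the poorly–separated sites, the extra smallness $\magn(|\bn|)\ep^{|\bn|}\ll\|\bn\cdot\omega\|^{\alpha}$ of the couplings to $\bze$, shows that $H_{\Lambda_{s+1}}(x)$ has exactly one eigenvalue within $g/2$ of $f^{(s+1)}(x,x_0)$, separated from the rest of the spectrum by $\ge g/2$. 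Comparing the numeration of Proposition \ref{prop_box_eigenvalues} (eigenvalues ordered by $\{x+\bn\cdot\omega\}$) with the coarse ordering of the diagonal entries again furnished by \eqref{eq_lower_lipschitz}, one identifies this eigenvalue as $E_{\bze}(x,x_0)$; in particular $f^{(s+1)}(x,x_0)$ is within $\eta$ of $E_{\bze}(x,x_0)$ and $\ge g/2$ from every other $E_{\bn}(x,x_0)$.

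\textbf{Step 3 (the quantitative bound).} Decompose $H^{(s+1)}_{\Lambda_{s+1}}(x)=\left(\begin{smallmatrix}f^{(s+1)}(x,x_0)&v^{*}\\ v&B\end{smallmatrix}\right)$ along $\C e_{\bze}\oplus\ell^{2}(\Lambda_{s+1}\setminus\{\bze\})$, with $\|v\|=\eta$ and, by Step 2, $\dist(f^{(s+1)}(x,x_0),\sigma(B))\ge g/4$. The eigenvalue $E_{\bze}(x,x_0)$ then solves $E_{\bze}(x,x_0)-f^{(s+1)}(x,x_0)=v^{*}\bigl(E_{\bze}(x,x_0)-B\bigr)^{-1}v$, so $\dist(E_{\bze}(x,x_0),\sigma(B))\ge g/8$ and
\[
\bigl|f^{(s+1)}(x,x_0)-E_{\bze}(x,x_0)\bigr|\le \frac{\|v\|^{2}}{g/8}=\frac{8\eta^{2}}{g}\le \frac{8P(s)^{2}\ep^{2(s+2)}\magn(s+2)^{2}}{g}.
\]
Since $\ep$ is chosen after $M$ and $\beta$, the quantity $P(s)^{2}\ep^{s+2}\magn(s+2)\,g^{-1}=P(s)^{2}(\ep M)^{s+2}M^{-1/2}\beta^{12\gamma(s+2)/\log(s+3)}g^{-1}$ tends to $0$ uniformly in $s$ — the exponentially small factor $(\ep M)^{s+2}$ beats the subexponential $g^{-1}$ and the polynomial $P(s)^2$, and the remaining factors are $\le1$ — so the right–hand side is (far) below $3^{d}\ep^{s+2}\magn(s+2)$, which is the assertion of Theorem \ref{th_ind_4}.

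The heart of the argument, and the step I expect to be the main obstacle, is Step 2: because the eigenvalues of $H_{\Lambda_{s+1}}(x)$ may be exponentially close in $s$, the crude first–order bound $|E_{\bn}(x)-f(x+\bn\cdot\omega)|\lesssim\ep$ is far too weak to localise anything once $s$ is large, so one is forced to use genuinely the coarse monotonicity and ordering produced by (ind4) on the previous scale together with the fact that the off–diagonal couplings decay fast enough to override the small denominators — the same absorption phenomenon quantified in Lemma \ref{lemma_generalized_absorption}. Matching the combinatorial numeration of Proposition \ref{prop_box_eigenvalues} to this ordering is what rules out $f^{(s+1)}(x,x_0)$ being accidentally close to a ``wrong'' branch $E_{\bn}$ attached to a far–away phase, and making all of this quantitative (uniformly in $s$, with the stated losses) is where the real work lies.
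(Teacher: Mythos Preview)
Your Step 1 is essentially the paper's opening observation and is fine. The real problem is Step 2, where the ``resolvent argument'' you sketch does not go through, and this undermines Step 3 as well.

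The difficulty is with the block $B=H^{(s+1)}_{\Lambda_{s+1}\setminus\{\bze\}}(x,x_0)$. You need $\dist(f^{(s+1)}(x,x_0),\sigma(B))\ge g/4$, and you try to get it by perturbing off the diagonal of $B$. But the off-diagonal entries of $B$ between two sites $\bm,\bn\neq\bze$ with $\level(\bm)=\level(\bn)=0$ are only controlled by (ind1), hence are of order $|\bm-\bn|$, i.e.\ as large as $\ep\,\magn(1)$. Meanwhile $g\gtrsim e^{-\alpha\rho(3s)^{1/(1+\mu)}}$ is subexponentially small in $s$, so for large $s$ one has $\ep\,\magn(1)\gg g$ and naive perturbation of the diagonal of $B$ says nothing about where $\sigma(B)$ sits relative to $f^{(s+1)}(x,x_0)$. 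The ``extra smallness of the couplings to $\bze$'' that you invoke controls $v$, not the internal couplings of $B$, so it does not rescue this step. (Even granting your conclusion about the full operator, interlacing does not transfer the gap to $B$, so the inference at the start of Step 3 also needs justification.)

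The paper avoids any spectral-gap claim for $B$. Instead it proves a \emph{counting} statement: it shows directly that at least $N_{+}(x)$ eigenvalues of $H_{\Lambda_{s+1}}(x)$ lie above $f^{(s+1)}(x,x_0)+3^{d}\ep^{s+2}\magn(s+2)$ and at least $N_{-}(x)$ lie below the symmetric threshold, which together with Step~1 forces the nearby eigenvalue to be $E_{\bze}$. The mechanism is a stratification of $\Lambda_{s+1}\setminus\{\bze\}$ into levels $L_k=\{\bn:\{x+\bn\cdot\omega\}\in I_k\setminus I_{k+1}\}$; by (ind2) the couplings touching a site of level $\ge k$ are of order $\ge k+1$, while by (ind4)/\eqref{eq_lower_lipschitz} the diagonal entry at such a site is at distance $\gtrsim\beta^{(k+1)}$ from $f^{(s+1)}(x,x_0)$. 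One then builds $\sigma(H^{(s+1)}_{L_0\cup\cdots\cup L_k})$ inductively, adding one level at a time: each addition is a perturbation of size $\lesssim\ep^{k+2}\magn(k+2)\ll\beta^{(k+1)}$, so the cumulative displacement of an eigenvalue originating at level $j$ is $\lesssim\ep^{j+1}\magn(j+1)$, still smaller than its initial distance $\gtrsim\beta^{(j+1)}$ from $f^{(s+1)}(x,x_0)$. This is precisely the ``off-diagonal decay beats the small denominators, level by level'' phenomenon you identify in your final paragraph, but it is implemented as a variational/counting bound rather than a resolvent estimate, and that is what makes it work.
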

In view of Proposition \ref{prop_box_eigenvalues}, the conclusion of Theorem \ref{th_ind_4} completes the proof of \eqref{eq_ind4_to_establish}, since the interval $I_s$ is small enough to contain at most one discontinuity point (see Remark \ref{rem_locally_monotone}). As a consequence, it also establishes (ind4) at stage $s+1$.
\begin{proof}
We will first establish
\bee
\label{eq_distance_spectrum}
\dist\l((f^{(s+1)}(x,x_0),\sigma(H_{\Lambda_{s+1}}(x))\r)\le 3^d \ep^{s+2}\magn(s+2).
\ene
Indeed, $f^{(s+1)}(x,x_0)$ becomes an exact eigenvalue of $H_{\Lambda_{s+1}}^{(s+1)}(x,x_0)$ if one removes all off-diagonal entries of the form $H^{(s+1)}(x,x_0;\bze,\bn)$, $\bn\neq\bze$. Since $x\in I_{s+1}$ and due to (ind1) -- (ind3), these entries are of the order (at least) $s+2$, which implies the estimate in the right hand side of \eqref{eq_distance_spectrum} by the norm of the corresponding perturbation. Note that $H_{\Lambda_{s+1}}^{(s+1)}(x,x_0)$ is unitarily equivalent to $H_{\Lambda_{s+1}}(x)=H_{\Lambda_{s+1}}^{(0)}(x)$, due to the structure of the support of $W^{(s+1)}(x,x_0)$ described in Lemma \ref{lemma_W_support}.

In other words, $f^{(s+1)}(x,x_0)$ is very close to {\it some} eigenvalue of $H_{\Lambda_{s+1}}(x)$. Let 
\begin{multline*}
N_+(x)=N_+(x,x_0):=\#\l\{\bm\in\Lambda_{s+1}\colon \{x+\bm\cdot\omega\}>\{x\}\r\}=\\=\#\l\{\bm\in\Lambda\colon f^{(s+1)}(x+\bm\cdot\omega,x_0)>f^{(s+1)}(x,x_0)\r\},	
\end{multline*}
\begin{multline*}
N_-(x)=N_-(x,x_0):=\#\l\{\bm\in\Lambda_{s+1}\colon \{x+\bm\cdot\omega\}<\{x\}\r\}=\\=\#\l\{\bm\in\Lambda\colon f^{(s+1)}(x+\bm\cdot\omega,x_0)<f^{(s+1)}(x,x_0)\r\}.	
\end{multline*}
The equalities in both definitions follow from \eqref{eq_no_smallden2}. In order to establish \eqref{eq_distance_eigenvalue}, it is sufficient to obtain the following two estimates:
\bee
\label{eq_counting_1}
\#\l(\sigma(H_{\Lambda_{s+1}}(x))\cap (E_{\bze}(x)+3^d \ep^{s+2}\magn(s+2),+\infty)\r)\ge N_+(x),\quad x\in I_{s+1};
\ene
\bee
\label{eq_counting_2}
\#\l(\sigma(H_{\Lambda_{s+1}}(x))\cap [-\infty,E_{\bze}(x)-3^d \ep^{s+2}\magn(s+2)\r)\ge N_-(x),\quad x\in I_{s+1}.
\ene
In other words, in order to show that an eigenvalue of $H_{\Lambda_{s+1}}(x)$ is equal to $E_{\bze}(x)$, it is sufficient to show that there are at least $N_+(x)$ eigenvalues above it and at least $N_-(x)$ eigenvalues below it (see \eqref{eq_eigenvalue_ordering}). Note that both inequalities in \eqref{eq_counting_1} and \eqref{eq_counting_2} would actually be equalities, since the total number of eigenvalues is $N_+(x)+N_-(x)+1$.

Due to symmetry, it is sufficient to establish \eqref{eq_counting_1}. Let
% We will prove \eqref{eq_counting_1} by considering the operator $H_{\Lambda_s}^{(s+1)}(x)$, which is unitarily equivalent to $H_{\Lambda_{s+1}}^{(0)}(x)=H_{\Lambda_{s+1}}^{}(x)$. Let 
\begin{multline*}
L_k(x)=L_k(x,x_0):=\{\bn\in \Lambda_{s+1}\colon \{x+\bn\cdot\omega\}\ge \{x\}; \{x+\bn\cdot\omega\}\in I_k\setminus I_{k+1}\},\\ k=0,1,\ldots,s,s+1,\quad x\in I_{s+1}.	
\end{multline*}
For the convenience of notation in the above definition for $k=s+1$, we will assume $I_{s+2}=\varnothing$. The choice of small $\beta$ will also guarantee that $L_{s+1}=\{\bze\}$. Assuming that $s\ge 2$, we can impose an additional requirement on the smallness of $\beta$ in order to guarantee $L_{s}=\varnothing$. In the sequel, we will assume $s\ge 2$. Note that $L_k(x)$ are disjoint, and
\bee
\label{eq_dimensions}
\#(L_0(x)\cup\ldots \cup L_{s-1}(x))=N_+(x).
\ene
Let
$$
m_k(x)=m_k(x,x_0):=\min\{f^{(s+1)}(x+\bn\cdot\omega,x_0)\colon \bn\in L_0(x)\cup\ldots\cup L_k(x)\}.
$$
Note that, from \eqref{eq_no_smallden2}, we have a lower bound
\bee
\label{eq_mk_lower}
m_k(x)\ge f^{(s+1)}(x,x_0)+\l\{\frac12 \|\bn\cdot\omega\|\colon \bn\in L_0(x)\cup\ldots\cup L_s(x)\r\}.
\ene
We claim the following:
\bee
\label{eq_claim_counting}
\inf\sigma(H_{L_0\cup\ldots \cup L_k(x)}^{(s+1)}(x,x_0))\ge \max_{r\colon 0\le r\le k}\{m_r(x)-3^d \ep^{r+1}\magn(r+1)\}.
\ene
Let
$$
\sigma_k(x):=\sigma(H_{L_0(x)\cup\ldots \cup L_k(x)}^{(s+1)}(x,x_0)).
$$
It will be convenient to consider $\sigma_k(x)$ as a multi-set (so that the eigenvalues are counted with multiplicities). For two finite multi-sets of the same cardinality
$$
A=\{a_1,\ldots,a_N\}, \quad B=\{b_1,\ldots,b_N\},\quad a_j,b_j\in \R,\quad j=1,\ldots,N,
$$
assuming that they are ordered:
$$
a_1\le a_2,\ldots\le a_N,\quad b_1\le b_2,\ldots\le b_N,
$$
define
$$
\dist(A,B):=\max\limits_{1\le j\le N}|a_j-b_j|.
$$
In the above notation, we have
$$
\dist\l(\sigma_0(x),\{f^{(s+1)}(x+\bn\cdot\omega,x_0)\colon \bn\in L_{0}(x)\}\r)\le 2\cdot 2^d\ep\,\magn(1),
$$
since if one removes the off-diagonal part of $H^{(s+1)}_{L_0(x)}(x,x_0)$, the operator will become diagonal with entries $\{f^{(s+1)}(x+\bn\cdot\omega,x_0)$, and the right hand side is an estimate of the norm of the off-diagonal part. As a consequence,
$$
\inf \sigma_0(x)\ge m_0(x)-2\cdot 2^d\ep\,\magn(1).
$$
In general, $\sigma_{k+1}(x)$ is obtained from the union
\bee
\label{eq_union}
\sigma_k(x)\cup  \{f^{(s+1)}(x+\bn\cdot\omega)\colon \bn\in L_{k+1}(x)\},
\ene
by applying a perturbation of size at most $2\cdot 2^d\ep^{k+2}\,\magn(k+2)$ to each element, so that
$$
\dist\l(\sigma_{k+1}(x),\sigma_k(x)\cup  \{f^{(s+1)}(x+\bn\cdot\omega)\colon \bn\in L_{k+1}(x)\}\r).
$$
 In this case, the perturbation accounts for off-diagonal matrix elements between $L_{k+1}$ and $L_0\cup\ldots\cup L_k$, as well as those between points of $L_k$. In both cases, the order of these off-diagonal entries is at most $k+2$ (one can also incorporate the case $k=0$ into this scheme by assuming $\sigma_{-1}=\varnothing$).

As a consequence, in order to arrive at $\sigma_s(x)$, each element $f^{(s+1)}(x+\bn\cdot\omega)$ with $\bn\in L_j(x)$ that appears at some stage in \eqref{eq_union}, will undergo a perturbation of size at most
$$
\sum_{r\ge j}2\cdot 2^d \ep^{r+1}\magn(r+1)\le 3^d \ep^{j+1}\magn(j+1).
$$
Combining it with \eqref{eq_no_smallden2} and using the lower bound \eqref{eq_mk_lower}, we obtain the following:
\begin{multline*}
\inf \sigma(H_{L_0\cup\ldots \cup L_s}^{(s+1)}(x))\ge f^{(s+1)}(x)+\min_{0\le j\le s}\min_{\bn\in L_j}\l\{\frac12 \|\bn\cdot\omega\|-3^d \ep^{j+1}\magn(j+1)\r\}\ge \\
\ge f^{(s+1)}(x)+\min_{0\le j\le s-1}\l\{\frac12 \beta^{\frac{j+1}{\log (j+2)}}-3^d \ep^{j+1}\magn(j+1)\r\}.
\end{multline*}
Clearly, one can guarantee 
$$
\beta^{\frac{j+1}{\log (j+2)}}\gg 3^d \ep^{j+1}\magn(j+1),\quad\text{for}\quad  j=0,1,2,\ldots
$$
say, by taking $\ep\ll M^{-1}$. Under this assumption, the minimal value will be attained at $j=s-1$. 
In view of \eqref{eq_dimensions}, we have produced a subspace $\ell^2(L_0\cup\ldots\cup L_k)$ such that the restriction of $H^{(s+1)}_{\Lambda_{s+1}}(x)$ into this subspace has $N_+(x)$ eigenvalues greater than $f^{(s+1)}(x)+3^d \ep^{j+1}\magn(j+1)$. This completes the proof.
\end{proof}
In order to establish (conv6) later in the next section, we will state a more general (and more cumbersome) version of Theorem \ref{th_ind_4} in the form of a corollary. The reader can check that the proof is the same. Suppose that $I\subset I_{s+1}$ is an interval. As discussed in Remark \ref{rem_extended_boxes}, denote extended regions associated to $I$:
$$
\Lambda_{\bn}^{(s)}=\Lambda_{\bn}^{(s)}(x_0):=\bigcup_{x\in I}\Lambda_{\bn}^{(s)}(x,x_0),
$$
where $\Lambda_{\bn}^{(s)}(x,x_0)$ are maximal extended regions \eqref{eq_W_support}. Since $I\subset I_s$, the regions $\Lambda_{\bn}^{(s)}$ will not overlap (see Remark \ref{rem_extended_boxes}).
\begin{cor}
\label{cor_extended_ind4}
Let $\Lambda\subset \Z^d$ be a finite subset satisfying the following properties:
\bee
\label{eq_condition_domain}
\Lambda_{\bze}^{(s)}\subset \Lambda\subset \Z^d\setminus\bigcup_{\bm\in\Z^d\setminus\{\bze\}}\Lambda_{\bm}^{(s)},
\ene
and
\bee
\label{eq_condition_interval}
\|\bn\cdot\omega\|\ge 10|I_s|,\quad \forall \bn\in \Lambda.
\ene
Assume that {\rm (ind1) -- (ind3)} are established at the stages up to $s+1$, and {\rm (ind4)} up to stage $s$. Then
$$
|f^{(s+1)}(x,x_0)-E_{\bze}(x)|\le 3^d \ep^{s+2}\magn(s+2),\quad\text{for}\quad x\in I. 
$$
\end{cor}
\begin{proof}
As mentioned above, the proof repeats the proof of Theorem \ref{th_ind_4}, up to notation. We will note that the assumption \eqref{eq_condition_interval} is used in order to state that $L_s=\varnothing$ and $L_{s+1}=\{\bze\}$, and \eqref{eq_condition_domain} required to state that $H_\Lambda^{(s)}(x,x_0)$ is unitarily equivalent to $H_\Lambda(x,x_0)=H_\Lambda^{(0)}(x,x_0)$.
\end{proof}
\section{Proof of Theorem \ref{th_convergence}}\label{section_convergence}
\subsection{Proof of (conv3) -- (conv5)}
Once the proof of Theorem \ref{th_inductive_estimates} is completed, it is easier to restate the estimates (ind1) -- (ind3) as follows.
\begin{cor}
\label{cor_inductive_simple}Assume that $\ep$ is small in the sense of Remark $\ref{rem_small_parameters}$. Then
$$
\begin{cases}
|U^{(s)}(x,x_0;\bn,\bn)-1|\le \ep^{(1-\delta/9)s};\\
|U^{(s)}(x,x_0;\bm,\bn)|\le \ep^{(1-\delta/9)s},&0<|\bm-\bn|\le s;\\
|U^{(s)}(x,x_0;\bm,\bn)|\le \ep^{(1-\delta/9)|\bm-\bn|},&s<|\bm-\bn|\le s(1+\delta);\\
U^{(s)}(x,x_0;\bm,\bn)=0,&|\bm-\bn|>s(1+\delta/9).
\end{cases}
$$
As a consequence,
\bee
\label{eq_W_estimate}
|W^{(s)}(x,x_0;\bn,\bn)-1|\le \ep^{(1-\delta/8)},\quad |W^{(s)}(x,x_0;\bm,\bn)|\le \ep^{(1-\delta/8)|\bm-\bn|},
\ene
\end{cor}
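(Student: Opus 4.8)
The corollary just repackages Theorem~\ref{th_inductive_estimates}: first I would read off the entrywise bounds on the one-step operators $U^{(s)}(x,x_0)$ from (ind1)--(ind3) together with the explicit structure \eqref{eq_expansion_1} of the perturbative Jacobi rotation and the lower Lipschitz bound of Remark~\ref{rem_lipschitz}, and then deduce the bounds on $W^{(s)}(x,x_0)=U^{(s)}(x,x_0)W^{(s-1)}(x,x_0)$ by multiplying out the telescoping product. The one preliminary that is used everywhere is the elementary observation that $\magn(r)\,\ep^{r}\le M^{r}\ep^{r}\le\ep^{(1-\delta/9)r}$ once $\ep\le M^{-9/\delta}$, which is legitimate since by Remark~\ref{rem_small_parameters} $\ep_0$ is the last parameter fixed; this lets me trade every $\magn(r)\ep^{r}$ occurring in (ind1)--(ind3) for $\ep^{(1-\delta/9)r}$, which is all the corollary asks for.

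\emph{Bounds on $U^{(s)}$.} The case $s=1$ is immediate from the $\alpha$-H\"older monotonicity of $f=f^{(0)}$, so let $s\ge2$ and fix $x\in I_s$ (off $I_s$ one has $U^{(s)}(x,x_0)=\one$). For $0<|\bk|\le s$ the numerator $H^{(s-1)}(x,x_0;\bze,\bk)$ is of order at least $s$ by (ind2) applied with $k=s-1$ (using $x\in I_s\subset I_{s-1}$), so $|H^{(s-1)}(x,x_0;\bze,\bk)|\le\magn(s)\ep^{s}$; and since $|\bk|\le s$ forces $\level(\bk)\ll s$, one gets $\|\bk\cdot\om\|\gg|I_s|$, so $x+\bk\cdot\om\notin I_s$ and Remark~\ref{rem_lipschitz} on scale $s-1$ gives $|f^{(s-1)}(x,x_0)-f^{(s-1)}(x+\bk\cdot\om,x_0)|\gtrsim\|\bk\cdot\om\|^{\alpha}\ge e^{-\rho\alpha s}$. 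Hence $|r_{\bk}(x)|\le\magn(s)\,e^{\rho\alpha s}\,\ep^{s}\le(Me^{\rho\alpha}\ep)^{s}\le\ep^{(1-\delta/9)s}$ for $\ep$ small (optionally Lemma~\ref{lemma_generalized_absorption} gives a cleaner absorption). Now, by (diag2)--(diag3), inside each basic region $\rR^{(s)}_{\bn}(x,x_0)$ the operator $U^{(s)}(x,x_0)$ is a translate of $\prod_{0<|\bm|\le s}U^{(s),\bm}$, and in each factor the only coupling is between the centre and $\bm$; reading off the product entrywise one finds that the diagonal entries equal $1+O\big(\sum_{\bm}|r_{\bm}|^{2}\big)=1+O(\ep^{(1-\delta/9)s})$, a centre-to-$\bm$ entry equals $-r_{\bm}+O(|r_{\bm}|^{2})=O(\ep^{(1-\delta/9)s})$, and an entry between two non-central sites $\bm,\bm'$ is $O(|r_{\bm}||r_{\bm'}|)=O(\ep^{2(1-\delta/9)s})\le\ep^{(1-\delta/9)|\bm-\bm'|}$ because $|\bm-\bm'|\le2s$. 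With the support statement of Lemma~\ref{lemma_W_support} this gives all four lines of the first display, and absorbing the row count $(2s+1)^{d}$ into one more power of $\ep$ also yields $\|U^{(s)}(x,x_0)-\one\|\le\ep^{s(1-\delta/10)}$, i.e.\ (conv3).

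\emph{Bounds on $W^{(s)}$.} Pick a small $t>0$ with $e^{-t}=\ep^{t'}$ and $1-\delta/8<t'<1-\delta/9$, and use the submultiplicative norm $\|A\|_{t}:=\sup_{\bn}\sum_{\bm}|A(\bm,\bn)|e^{t|\bm-\bn|}$. Splitting the sum over $\bm$ at distance $s$ and using the sharper bound $\ep^{(1-\delta/9)s}$ for closer sites, the two decay estimates on $U^{(s)}-\one$ give $\|U^{(s)}-\one\|_{t}\le C(2s+1)^{d}\ep^{((1-\delta/9)-t')s}=:a_{s}$, a sequence summable in $s$ that tends to $0$ as $\ep\to0$. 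Consequently $\|W^{(s)}-\one\|_{t}\le\prod_{\ell\ge1}(1+a_{\ell})-1=:K(\ep)\to0$ as $\ep\to0$. Extracting entries: $|W^{(s)}(\bn,\bn)-1|\le K(\ep)\le\ep^{1-\delta/8}$, and for $\bm\ne\bn$, $|W^{(s)}(\bm,\bn)|\le K(\ep)e^{-t|\bm-\bn|}=K(\ep)\ep^{t'|\bm-\bn|}\le\ep^{(1-\delta/8)|\bm-\bn|}$, the last step since $t'>1-\delta/8$ and $K(\ep)\le\ep^{-(t'-(1-\delta/8))}$ for $\ep$ small; this is the second display, and $\|W^{(s)}-W^{(s-1)}\|\le\|U^{(s)}-\one\|\le\ep^{s(1-\delta/10)}$ also gives the convergence (conv4).

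\emph{Main obstacle.} The only genuinely quantitative step is the bound on $r_{\bk}(x)$: it simultaneously needs the order estimate (ind2), the lower bound on $|f^{(s-1)}(x,x_0)-f^{(s-1)}(x+\bk\cdot\om,x_0)|$ from Remark~\ref{rem_lipschitz}, and the fact that every $\bk$ with $|\bk|\le s$ has level $\ll s$, so that the small denominator is harmless at scale $s$. Everything else is bookkeeping that exploits the deliberate gaps between the exponents $1-\delta/9$, $1-\delta/8$, $1-\delta/10$ to swallow polynomial-in-$s$ combinatorial factors and the accumulated multiplicative losses in the product defining $W^{(s)}$, which is possible precisely because $\ep_0$ is chosen after all other parameters.
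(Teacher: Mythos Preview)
Your treatment of the $U^{(s)}$ bounds is correct and is precisely what the paper's one-line proof has in mind: feed (ind1)--(ind2) through the explicit Jacobi-rotation structure, bound the small denominator via Remark~\ref{rem_lipschitz}, and absorb $\magn(\cdot)$, the Diophantine loss $e^{\rho\alpha s^{1/(1+\mu)}}$, and the $(2s+1)^d$ combinatorial factor into $\ep^{-\delta s/100}$. The paper does not spell this out; your account is the natural unpacking.

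For $W^{(s)}$ the weighted $\ell^1$ norm $\|\cdot\|_t$ is a clean device and delivers the off-diagonal estimate correctly. However, your diagonal bound has a gap. With $t'\in(1-\delta/8,1-\delta/9)$ one gets
\[
a_1\;\asymp\;\ep^{(1-\delta/9)-t'},\qquad (1-\delta/9)-t'\in\bigl(0,\tfrac{\delta}{72}\bigr),
\]
so $K(\ep)=\prod_{\ell\ge1}(1+a_\ell)-1\asymp a_1$ is only of order $\ep^{c}$ with $c<\delta/72$, not $\ep^{1-\delta/8}$. The assertion ``$K(\ep)\le\ep^{1-\delta/8}$'' is therefore false as written; a single weight $t'$ cannot simultaneously give the diagonal bound $\ep^{1-\delta/8}$ and the off-diagonal decay $\ep^{(1-\delta/8)|\bm-\bn|}$. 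The repair is already implicit in your last paragraph: from $\|U^{(\ell)}-\one\|\le\ep^{\ell(1-\delta/10)}$ one has
\[
\|W^{(s)}-\one\|\le\sum_{\ell\ge1}\|U^{(\ell)}-\one\|\le\sum_{\ell\ge1}\ep^{\ell(1-\delta/10)}=O(\ep^{1-\delta/10})\le\ep^{1-\delta/8},
\]
and $|W^{(s)}(\bn,\bn)-1|=|\langle(W^{(s)}-\one)e_{\bn},e_{\bn}\rangle|\le\|W^{(s)}-\one\|$ finishes the diagonal case. Alternatively, once the off-diagonal bound is established, unitarity gives $1-|W^{(s)}(\bn,\bn)|^2=\sum_{\bm\ne\bn}|W^{(s)}(\bm,\bn)|^2=O(\ep^{2(1-\delta/8)})$, which also suffices since your weighted-norm bound already forces $W^{(s)}(\bn,\bn)>0$.
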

\begin{proof}
Follows directly from the inductive estimates on $H^{(s)}(x,x_0)$ and the definition of $U^{(s)}(x,x_0)$, since one can absorb all additional factors into $\ep^{-\delta s/100}$ or $\ep^{-\delta|\bm-\bn|/100}$, respectively. The estimate \eqref{eq_W_estimate} follows the definition of $W^{(s)}(x,x_0)$ by estimating the matrix elements of the product.
\end{proof}
\begin{cor}
Under the assumptions of Theorem $\ref{th_convergence}$, the limit
$$
W^{(\infty)}(x,x_0)=\lim\limits_{s\to +\infty}W^{(s)}(x,x_0)
$$
exists in the operator norm topology, and satisfies
$$
\|W^{(\infty)}(x,x_0)-\one\|\le \ep^{1-\delta/7}.
$$
Moreover,
$$
(W^{(\infty)}(x_0,x_0)^{-1}H(x)W^{(\infty)}(x_0,x_0))(\bze,\bn)=0,\quad \bn\neq \bze.
$$
As a consequence,
$$
H(x)W^{(\infty)}(x_0,x_0)e_{\bze}=E(x_0)W^{(\infty)}(x_0,x_0)e_{\bze},
$$
where $E(x_0)\in \R$.
\end{cor}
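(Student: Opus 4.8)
The plan is to assemble the three assertions from Corollary~\ref{cor_inductive_simple} together with (ind1)--(ind3), which are now at our disposal; the whole argument is a convergent geometric estimate plus a limiting identification of matrix entries.

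\emph{Convergence and the norm bound.} First I would note that, by a Schur‑test estimate on the banded matrix $U^{(s)}(x,x_0)-\one$ (its bandwidth is at most $s(1+\delta/9)$ and its entries are bounded as in Corollary~\ref{cor_inductive_simple}), one gets $\|U^{(s)}(x,x_0)-\one\|\le \ep^{s(1-\delta/8)}$ once $\ep$ is small in the sense of Remark~\ref{rem_small_parameters}, the polynomial‑in‑$s$ combinatorial factor being absorbed into the small loss $\delta/9\to\delta/8$; from \eqref{eq_W_estimate} one also has $\|W^{(s)}(x,x_0)\|\le 2$. The telescoping identity
$$
W^{(s+1)}(x,x_0)-W^{(s)}(x,x_0)=\bigl(U^{(s+1)}(x,x_0)-\one\bigr)W^{(s)}(x,x_0)
$$
then yields $\|W^{(s+1)}-W^{(s)}\|\le 2\,\ep^{(s+1)(1-\delta/8)}$, so $(W^{(s)}(x,x_0))_s$ is Cauchy in operator norm and $W^{(\infty)}(x,x_0):=\lim_s W^{(s)}(x,x_0)$ exists; being a norm limit of unitaries it is unitary, and summing the geometric series gives $\|W^{(\infty)}(x,x_0)-\one\|\le\ep^{1-\delta/7}$.

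\emph{Partial diagonalization at $x=x_0$.} Since $\beta^{(s)}>0$ we have $x_0\in I_s$ for every $s$, so (ind2) applied with $\bm=\bze$, $k=s$ shows that for each fixed $\bn\in\Z^d\setminus\{\bze\}$ the entry $H^{(s)}(x_0,x_0;\bze,\bn)$ is of order at least $s+1$ as soon as it is not already forced to vanish by Lemma~\ref{lemma_W_support}; in all cases $|H^{(s)}(x_0,x_0;\bze,\bn)|\le \magn(s+1)\ep^{s+1}\le M^{-1/2}(M\ep)^{s+1}\to 0$ as $s\to\infty$, because $M\ep<1$. On the other hand, for $x_0\neq 0$ the operator $H(x_0)$ is bounded, so from the norm convergence $W^{(s)}(x_0,x_0)\to W^{(\infty)}(x_0,x_0)$ we obtain
$$
H^{(s)}(x_0,x_0)=W^{(s)}(x_0,x_0)^{-1}H(x_0)W^{(s)}(x_0,x_0)\longrightarrow W^{(\infty)}(x_0,x_0)^{-1}H(x_0)W^{(\infty)}(x_0,x_0)=:H^{(\infty)}(x_0,x_0)
$$
in operator norm, hence entrywise. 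Combining the two facts gives $H^{(\infty)}(x_0,x_0;\bze,\bn)=0$ for all $\bn\neq\bze$, the asserted identity; when $x_0=0$ and $f(0)=-\infty$ one runs the same argument with $(H(0)-z)^{-1}$ in place of $H(0)$, as in Remark~\ref{rem_infinite}.

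\emph{The eigenvector.} By self‑adjointness, $H^{(\infty)}(x_0,x_0;\bn,\bze)=\overline{H^{(\infty)}(x_0,x_0;\bze,\bn)}=0$ for $\bn\neq\bze$, so $H^{(\infty)}(x_0,x_0)e_{\bze}=E(x_0)e_{\bze}$ with $E(x_0):=H^{(\infty)}(x_0,x_0;\bze,\bze)\in\R$; this number is finite because $f^{(s)}(x_0,x_0)$ is Cauchy by (ind3) and converges to $E(x_0)$. Applying the unitary $W^{(\infty)}(x_0,x_0)$ to $W^{(\infty)}(x_0,x_0)^{-1}H(x_0)W^{(\infty)}(x_0,x_0)e_{\bze}=E(x_0)e_{\bze}$ gives $H(x_0)\psi(x_0)=E(x_0)\psi(x_0)$ with $\psi(x_0)=W^{(\infty)}(x_0,x_0)e_{\bze}$. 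The only point I expect to need care is the passage to the limit in the conjugation when $H(x_0)$ is unbounded (the case $f(0)=-\infty$, $x_0=0$), which forces one to phrase that step through resolvents; everything else reduces to the convergent geometric bounds already in place, so I do not anticipate a real obstacle here.
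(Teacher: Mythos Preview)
Your proof is correct and follows essentially the same route as the paper's (very terse) argument: deduce $\|U^{(s)}-\one\|$ decay from Corollary~\ref{cor_inductive_simple}, telescope to get norm convergence of $W^{(s)}$, and then pass to the limit in (ind2) to obtain the vanishing of the off-diagonal $(\bze,\bn)$ entries. One minor simplification: since each $U^{(\ell)}$ is unitary, $W^{(s)}$ is unitary and $\|W^{(s)}\|=1$ exactly, so there is no need to invoke \eqref{eq_W_estimate} for the bound $\|W^{(s)}\|\le 2$; your extra care with the unbounded case $f(0)=-\infty$ via resolvents is a welcome detail the paper leaves implicit.
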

\begin{proof}
The first two claims follow from the fact that $\|U^{(s)}(x,x_0)-\one\|\le \ep^{(1-\delta/9)s}$. The last two claims follow from passing to the limit in (ind1) and (ind2).
\end{proof}
This completes the proof of (conv3) -- (conv5). It is easy to see that Corollary \ref{cor_localization} follows from covariance and an observation that if one chooses a normalization
$$
\|\psi(x_0)\|_{\ell^2(\Z)}=1,\quad \psi(x_0;\bze)>0,
$$
then the set
\bee
\label{eq_orthogonal_basis}
\{\psi_{\bn}(x_0):=T^{\bn}\psi(x_0-\bn\cdot\omega)\colon \bn\in \Z^d\}
\ene
is a small perturbation of the standard basis. More precisely, if one considers a matrix
$$
A(x_0;\bm,\bn):=\psi_{\bn}(x_0;\bm),
$$
then, by Schur test, it will satisfy
$$
\|A(x_0)-\one\|\le \ep^{1-\delta/6},
$$
which, together with the fact that \eqref{eq_orthogonal_basis} must be an orthogonal collection of eigenvectors of $H(x_0)$, establishes completeness of that set. 
\subsection{Monotonicity of the eigenvalue function and the proof of (conv6)} We will start from proving a weaker version of (conv6): that $E(x_0)$ is non-decreasing on $[0,1)$ and is Lipschitz monotone provided $f$ was Lipschitz monotone. We will use the following elementary result.
\begin{lem}
\label{lemma_gamma_monotone}
Let $\{\ep_j\}_{j=0}^{+\infty}$, $\{\delta_j\}_{j=0}^{+\infty}$ be two sequences of positive real numbers such that
$$
\ep_j\to 0,\quad \delta_j\to 0,\quad \frac{\ep_j}{\delta_j}\to 0\quad \textrm{as}\quad j\to +\infty,
$$
and let $f\colon \R\to [-\infty,+\infty)$ be a $1$-periodic function such that, for every $x_0\in \R$ and $j\in \N$ there exists a $1$-periodic function $g_{j,x_0}\colon \R\to [-\infty,+\infty)$ that is strictly increasing on $[0,1)$ satisfying
\bee
\label{eq_partial_gamma_monotonicity}
|f(x)-g_{j,x_0}(x)|<\ep_j,\quad x\in (x_0-\delta_j,x_0+\delta_j).
\ene
Then $f$ is non-decreasing on $[0,1)$. If all functions $g_{j,x_0}$ are Lipschitz monotone on $[0,1)$, then $f$ is also Lipschitz monotone on $[0,1)$.
\end{lem}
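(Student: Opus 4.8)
The plan is to prove the non-decreasing statement by a telescoping (chaining) argument along a fine partition, and then to observe that the Lipschitz variant follows from the same computation with one extra additive term.

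First I would fix $0\le x<y<1$; the goal is $f(x)\le f(y)$. For a large index $j$, pick $N=N(j)\in\N$ with $(y-x)/\delta_j<N<(y-x)/\delta_j+1$ and set the equally spaced points $t_i:=x+i(y-x)/N$, $i=0,1,\ldots,N$, so that $t_0=x$, $t_N=y$, all $t_i\in[0,1)$, and the spacing satisfies $t_{i+1}-t_i=(y-x)/N<\delta_j$. Taking $x_0=t_i$ in \eqref{eq_partial_gamma_monotonicity}, both $t_i$ and $t_{i+1}$ lie in $(t_i-\delta_j,t_i+\delta_j)$, hence $|f(t_i)-g_{j,t_i}(t_i)|<\ep_j$ and $|f(t_{i+1})-g_{j,t_i}(t_{i+1})|<\ep_j$. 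Since $g_{j,t_i}$ is increasing on $[0,1)$ and $0\le t_i<t_{i+1}<1$, we have $g_{j,t_i}(t_i)\le g_{j,t_i}(t_{i+1})$, and therefore $f(t_i)<f(t_{i+1})+2\ep_j$. Iterating over $i=0,\ldots,N-1$ gives $f(x)<f(y)+2N\ep_j\le f(y)+2(y-x)\ep_j/\delta_j+2\ep_j$, and the error term tends to $0$ as $j\to\infty$ by the hypothesis $\ep_j/\delta_j\to 0$. Letting $j\to\infty$ yields $f(x)\le f(y)$.

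For the Lipschitz conclusion I would run exactly the same chain, but now using that each Lipschitz monotone $g_{j,t_i}$ obeys $g_{j,t_i}(t_{i+1})-g_{j,t_i}(t_i)\ge t_{i+1}-t_i$. This upgrades the one-step bound to $f(t_i)+(t_{i+1}-t_i)<f(t_{i+1})+2\ep_j$; since the increments telescope, summation gives $f(x)+(y-x)<f(y)+2N\ep_j$, and passing to the limit in $j$ shows $f(y)-f(x)\ge y-x$, i.e.\ $f$ is Lipschitz monotone on $[0,1)$.

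The only point requiring a little care is the possibility that $f$ and the $g_{j,x_0}$ take the value $-\infty$. If $f(x)=-\infty$ the desired inequality is trivial, so one may assume $f(x)\in\R$; the estimate $|f-g_{j,x_0}|<\ep_j$ is read with the convention (cf.\ Remark \ref{rem_infinite}) that $f$ and $g_{j,x_0}$ are simultaneously $-\infty$ at a given point, and each strictly increasing $g_{j,x_0}$ attains $-\infty$ at no more than one point of $[0,1)$. Hence, after perturbing the finitely many exceptional nodes of the partition slightly (or simply noting that all quantities are finite on $(0,1)$), the chaining argument goes through verbatim. I do not expect any genuine obstacle here: the substance of the lemma is precisely this elementary telescoping estimate, and the assumption $\ep_j/\delta_j\to 0$ is exactly what makes the accumulated error negligible in the limit.
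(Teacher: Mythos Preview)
Your proof is correct and follows essentially the same telescoping/chaining argument as the paper: partition $[x,y]$ into subintervals of length $<\delta_j$, apply \eqref{eq_partial_gamma_monotonicity} on each piece, sum, and send $j\to\infty$. The paper's write-up is terser (it absorbs the factor $2$ into the constant and does not spell out the partition points), and it relegates the $-\infty$ issue to a separate remark, but the substance is identical.
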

\begin{rem}
If $g_{j,x_0}(x)=-\infty$, then \eqref{eq_partial_gamma_monotonicity} is understood as $f(x)=-\infty$. Due to the assumptions on $g_{j,x_0}$, this can only happen at $x\in \Z$. This is the only place where strict monotonicity of $g_{j,x_0}$ is used, and it can be replaced by $g_{j,x_0}$ being finite on $(0,1)$.
\end{rem}
\begin{proof}
Note that \eqref{eq_partial_gamma_monotonicity} implies that $f(x)$ must be finite except maybe for integer points. Let $0<x<y<1$, and let $n_j$ be the largest integer number such that $n_j\delta_j\le y-x$. By repeatedly applying \eqref{eq_partial_gamma_monotonicity} and the triangle inequality, we arrive to
$$
f(y)-f(x)\ge -(n_j+1)\ep_j\ge -(y-x)\frac{\ep_j}{\delta_j}-\ep_j\to 0,\quad j\to +\infty.
$$
Similarly, if $g_{j,x_0}$ are Lipschitz monotone, we have
$$
f(y)-f(x)\ge y-x-(n_j+1)\ep_j\ge y-x-(y-x)\frac{\ep_j}{\delta_j}-\ep_j=(y-x)\l(1-\frac{\ep_j}{\delta_j}\r)-\ep_j,
$$
which implies Lipschitz monotonicity of $f$ after taking $j\to +\infty$.
\end{proof}

{\noindent\bf The weaker version of (conv6).} Suppose that $\beta_0$ is chosen to satisfy the assumptions of (conv1) -- (conv5) of Theorem \ref{th_convergence} and $0<\beta<\beta_0$, $\tilde\beta:=\beta/3$, and $\ep_0$ is chosen such that the conclusion of (conv1) -- (conv5) is satisfied both with $\beta$ and $\tilde\beta$. As a consequence, both of the inductive schemes will converge to the same eigenvalue and the same eigenfunction (up to normalization) of the operator $H(x_0)$ (since each eigenfunction will be a small perturbation of $e_{\bze}$). Denote the corresponding finite volume operators by $H^{(s)}$, $\tilde H^{(s)}$, with the diagonal entries $f^{(s)}(x,x_0)$ and $\tilde f^{(s)}(x,x_0)$, respectively, and the corresponding values of $\beta_j$ by $\beta_j$, $\tilde\beta_j$. Fix some $x_0\in [0,1)$ and suppose that $|x_1-x_0|<\frac13 \beta^{(s)}$. Then we have the following inclusion of the intervals:
\bee
\label{eq_inclusion_intervals}
\l(x_1-\tilde\beta_j,x_1+\tilde\beta_j\r)\subset (x_0-\beta_j,x_0+\beta_j),\quad\text{for}\quad  0\le j\le s.
\ene
From Claim (5) in Lemma \eqref{eq_W_support}, and Remark \ref{rem_extended_boxes}, we have that the extended regions $\Lambda_{\bze}^{(s)}(x_0)$, constructed for the operators $H^{(s)}(x,x_0)$, would contain the corresponding extended regions constructed for $\tilde H^{(s)}(x,x_1)$, assuming 
$$
|x_1-x_0|<\frac13 \beta^{(s)},\quad |x-x_1|<\tilde\beta^{(s)}.
$$
For $\ep$ small enough, the region $\Lambda_{\bze}^{(s)}(x_0)$ will satisfy the assumptions of Corollary \ref{cor_extended_ind4}. Let $E_{\bze}(x,x_0)$ be the $1$-monotone eigenvalue branch obtained from applying Proposition \ref{prop_box_eigenvalues} to $\Lambda=\Lambda_{\bze}^{(s)}(x_0)$. We arrive at
\bee
\label{eq_approximate_x1}
|\tilde f^{(s)}(x_1,x_1)-E_{\bze}(x_1,x_0)|\le 3^d \ep^{s+2}\magn(s+2),\quad \text{for}\quad x_1\in I= \l(x_0-\beta^{(s)}/3,x_0+\beta^{(s)}/3\r).
\ene
From (ind3), we also have
$$
|\tilde f^{(s)}(x_1,x_1)-E(x_1)|\le 2\ep^{2s}\magn(2s),\quad \text{for} \quad x_1\in \R,
$$
where
$$
E(x_1)=\lim\limits_{s\to +\infty}f^{(s)}(x_1,x_1)=\lim\limits_{s\to +\infty}\tilde f^{(s)}(x_1,x_1)
$$
is the limiting eigenvalue function, whose existence has been established in (conv1) -- (conv5). In the above estimates, $\magn$ is chosen with respect to $\tilde\beta^{(s)}$. By combining the above, we arrive to
$$
|E(x_1)-E_{\bze}(x_1,x_0)|\le 3^{d+1} \ep^{s+2}\magn(s+2),\quad \text{for}\quad x_1\in I= \l(x_0-\beta^{(s)}/3,x_0+\beta^{(s)}/3\r).
$$
Since $E_{\bze}(x_1,x_0)$ associated to $\Lambda_{\bze}^{(s)}(x_0)$ is monotone (or Lipschitz monotone) as a function of $x_1\in \R$ and
$$
\frac{3^d \ep^{s+2}\magn(s+2)}{\beta^{(s)}}\to 0,\quad \text{as}\quad s\to +\infty
$$
assuming $\ep$ is small in the sense of Remark \ref{rem_small_parameters}, we have that $E(x_1)$ is also either monotone or Lipschitz monotone as a function of $x_1$ due to Lemma \ref{lemma_gamma_monotone}.\,\,\qed

Finally, we can establish the remaining part of (conv6) as a consequence of the general result \cite{CS_ids}.
\begin{prop}
\label{prop_ids}
Under the assumptions of Theorem $\ref{th_main}$, the inverse function of $E(\cdot)|_{[0,1)}$ is equal to the integrated density of states. As a consequence, $E(\cdot)$ cannot vanish on an interval.
\end{prop}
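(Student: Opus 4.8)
The plan is to identify the integrated density of states $N(\cdot)$ of the family $H(x)$ with the (right-continuous) generalized inverse of $E|_{[0,1)}$ by a direct computation using the covariant eigenbasis of Corollary~\ref{cor_localization}, and then to upgrade ``generalized inverse'' to ``inverse'' by invoking continuity of $N$. First I would record the standard input: since each coordinate $\omega_j$ is irrational (because $\|k\omega_j\|>0$ for all $k\neq 0$ whenever $\omega\in\Omega_{\rho,\mu}$), the translation $\Z^d$-action $x\mapsto x+\bn\cdot\omega$ on $\T=[0,1)$ is ergodic; hence the IDS of the (possibly unbounded, cf.\ Remark~\ref{rem_infinite}) covariant family $H(x)$ is well-defined, $x$-independent, and given by the Pastur--Shubin formula
\[
N(\lambda)=\int_{0}^{1}\big\langle e_{\bze},\,\chi_{(-\infty,\lambda]}(H(x))\,e_{\bze}\big\rangle\,dx .
\]

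Next I would insert the spectral decomposition from Corollary~\ref{cor_localization}. Writing $\psi_{\bn}(x)=T^{\bn}\psi(x-\bn\cdot\omega)$ for the orthonormal eigenbasis furnished by the covariance \eqref{eq_covariance}, with eigenvalues $E_{\bn}(x)=E(x-\bn\cdot\omega)$, the diagonal spectral measure of $H(x)$ at $e_{\bze}$ is $\sum_{\bn}|\psi_{\bn}(x;\bze)|^2\,\delta_{E(x-\bn\cdot\omega)}$, and $\psi_{\bn}(x;\bze)=(T^{\bn}\psi(x-\bn\cdot\omega))(\bze)=\psi(x-\bn\cdot\omega;\bn)$. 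All summands being non-negative, Tonelli lets me interchange sum and integral (the double sum converges absolutely thanks to the uniform bounds $|\psi(\cdot;\bn)|\le\ep^{(1-\delta)|\bn|}$ of Theorem~\ref{th_main}); in the $\bn$-th term I substitute $y\equiv x-\bn\cdot\omega$ modulo $1$, a measure-preserving map of $\T$ leaving the $1$-periodic integrand unchanged, and then use $\sum_{\bn}|\psi(y;\bn)|^2=\|\psi(y)\|^2=1$:
\[
N(\lambda)=\sum_{\bn}\int_0^1\chi_{E(y)\le\lambda}\,|\psi(y;\bn)|^2\,dy=\int_0^1\chi_{E(y)\le\lambda}\,dy=\big|\{y\in[0,1)\colon E(y)\le\lambda\}\big| .
\]

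Finally I would read off the conclusion. By the weak form of (conv6) already established, $E$ is non-decreasing on $[0,1)$, so $\{y\in[0,1)\colon E(y)\le\lambda\}$ is, up to a null set, an initial segment $[0,N(\lambda))$ of $[0,1)$; that is, $N$ is precisely the generalized inverse of $E|_{[0,1)}$. If $E$ were constant, equal to some $\lambda_0$, on a subinterval $(a,b)\subset[0,1)$, then $N$ would have a jump of size $\ge b-a$ at $\lambda_0$; but the IDS of an operator of the form $\ep\Delta+V$ with ergodic $V$ on $\ell^2(\Z^d)$ is continuous by \cite{CS_ids} (see also \cite{BK}), a contradiction. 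Hence $E$ is constant on no subinterval of $[0,1)$ and, being non-decreasing, is strictly increasing there; consequently $E|_{[0,1)}$ is genuinely invertible on its range and $N=(E|_{[0,1)})^{-1}$, which in particular shows that $E$ cannot be constant (``vanish'') on an interval. The argument is soft: I do not anticipate a genuine obstacle, the only care needed being the interchange of summation and integration (handled by the exponential localization of Theorem~\ref{th_main}) and the correct appeal to the Pastur--Shubin formula and to continuity of the IDS, both of which apply verbatim to $H(x)=\ep\Delta+f(\cdot+\omega\cdot\bn)$.
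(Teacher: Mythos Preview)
Your proof is correct and follows essentially the same route as the paper: expand $\langle e_{\bze},\chi_{(-\infty,\lambda]}(H(x))e_{\bze}\rangle$ (the paper uses a smooth test function $g$ instead of an indicator, but the computation is identical) in the covariant eigenbasis $\psi_{\bn}(x)$, shift $x\mapsto x-\bn\cdot\omega$ term by term, use $\sum_{\bn}|\psi(y;\bn)|^2=1$, and then invoke \cite{CS_ids} to exclude flat pieces. One small remark: irrationality of each individual $\omega_j$ is not what yields ergodicity of the $\Z^d$-action on $\T$; what you actually need (and what $\omega\in\Omega_{\rho,\mu}$ gives directly) is $\|\bn\cdot\omega\|>0$ for all $\bn\neq\bze$, i.e.\ rational independence of $\{1,\omega_1,\ldots,\omega_d\}$.
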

\begin{proof}
The relation between $E(\cdot)$ and the IDS is the following standard calculation. Denote by $dN(E)$ the density of states measure of the ergodic operator family $\{H(x_0)\}_{x_0\in \T}$, and suppose that $g\in C_0^{\infty}(\R)$. Let
$$
E_{\bn}(x):=E(x-\bn\cdot\omega),\quad \psi_{\bn}(x)=T^{\bn}\psi(x-\bn\cdot\omega)
$$
be the eigenvalues and eigenvectors of $H(x)$. Then, by the definition of the density of states measure, change of variable, and normalization of eigenvectors, we have
\begin{multline*}
\int g(E)\,dN(E)=\int_0^1 \<g(H(x)e_{\bze},e_{\bze}\>\,dx=\int_{[0,1)}\sum_{\bn\in \Z^d}g(E_{\bn}(x))|\psi_{\bn}(x;\bze)|^2\,dx\\
=\int_{[0,1)}\sum_{\bn\in \Z^d}g(E(x-\bn\cdot\omega))|\psi(x-\bn\cdot\omega;\bn)|^2\,dx\\
=\int_{[0,1)}\sum_{\bn\in \Z^d}g(E(x))|\psi(x;\bn)|^2\,dx=\int_{[0,1)}g(E(x))\,dx,
\end{multline*}
which implies the first claim. We have already shown that $E$ is non-decreasing on $[0,1)$. The only possibility for it to be not strictly increasing would be to vanish on an interval. However, the latter would lead to an atom in the measure $dN(E)$, which is impossible due to the general results \cite{CS_ids} (see also \cite{BK}). In fact, the cited results show that $dN(E)$ will always be $\log$-H\"older continuous, from which one can extract some quantitative monotonicity bounds on $E(\cdot)$ which are weaker than the $\alpha$-H\"older property.
\end{proof}
\section{Existence of spectral gaps}\label{section_gaps}
In this section, we prove Lemma \ref{lemma_gaps}, which will immediately imply Theorem \ref{th_gaps}. Note that some parts of Lemma \ref{lemma_gaps} are merely references to known facts. We will often refer to some constructions from \cite{Cantor}, or some well known constructions from other sources that are used in \cite{Cantor}.

Suppose that $f$ is sawtooth-type. As discussed in the introduction, let
$$
H_{\bze,t}:=H_{\bze}+t\<e_{\bze},\cdot\>e_{\bze},\quad H_t(0):=H(0)+t\<e_{\bze},\cdot\>e_{\bze}.
$$
In the setting of gap filling, it is natural to include $t=\infty$ into consideration (see \cite{Cantor} and \cite{Simon}) by considering infinite coupling. We will denote $\rbar:=\R\cup\{\infty\}$, with the topology in which it is homeomorphic to a circle. Let also
$$
H_{\bze}^{\perp}:=(H(0)-H_{\bze})|_{\ell^2_{\bze}(\Z^d)^{\perp}}
$$
be the remaining part of the operator $H(0)$. Note that
$$
H_t(0)=H_{\bze,t}\oplus H_{\bze}^{\perp}
$$
for all $t\in \R$ (in other words, $H_{\bze}^{\perp}$ naturally does not depend on $t$). The operator $H_t(0)$ can be obtained from the original operator $H(0)$ if one replaces $f$ by
$$
f_t(x)=\begin{cases}
f(x),&x\in (0,1)\\
t,&x=0,
\end{cases}
$$
extended periodically into $\R$. For $t\in\mathcal A:=\rbar\setminus(f(0),f(1-0))$, the function $f_t$ is $1$-monotone either on $[0,1)$ or on $(0,1]$ (see Remark \ref{rem_infinite}). As a consequence, Theorem \ref{th_main} applied to $H_t(0)$ implies that the spectrum of $H_{\bze}(t)$ is purely point. Let 
$$
\Sigma_{\bze}:=\sigma(H_{\bze}),\quad \Sigma^{\perp}_{\bze}:=\sigma(H^{\perp}_{\bze}).
$$
The proof of Theorem \ref{th_gaps} is contained in the following lemma. Note that a significant part of it (for example, (4)) is a restatement of known results.
\begin{lem}
\label{lemma_gaps}
Suppose that $f$ is $1$-periodic and $\alpha$-H\"older monotone on $[0,1)$. Define $f_t$, $H_{\bze,t}$, $H_t(0)$ as above. Suppose that $\omega$ is Diophantine and $\ep$ is small enough so that the conclusion of Theorem \ref{th_main} holds. Then
\begin{enumerate}
	\item We have $\sigma_{\mathrm{ess}}(H(t))=\Sigma_{\bze}\cup \Sigma^{\perp}_{\bze}$ for all $t\in \R$ and 
	$\sigma(H(t))=\Sigma_{\bze}\cup \Sigma^{\perp}_{\bze}$ for $t\in [f(0),f(1-0)]$.
	\item Every isolated eigenvalue of $H_{\bze,t}$ with $t\in \mathcal A$ is also an isolated eigenvalue of $H_t(0)$ with $t\in \mathcal A$, and vice versa. In both cases, this eigenvalue is a non-constant real analytic function in some neighborhood of $t$. As a consequence, $H_{\bze,t}$ also has no isolated eigenvalues for $t\in [f(0),f(1-0)]$, and $\sigma_{\mathrm{ess}}(H_{\bze,t})=\Sigma_{\bze}$.
	\item For every gap $G$ of $\Sigma_{\bze}$ (that is, a connected component of $\rbar\setminus \Sigma_{\bze}$), we have 
\bee
\label{eq_gap_kinds}
\text{either}\quad G\cap\bigcup_{t\in \rbar}\sigma(H_{\bze,t})=\varnothing,\quad\text{or}\quad G\cap\bigcup_{t\in \mathcal A}(\sigma(H_{\bze,t}))=G.
\ene
\item For a dense $G_{\delta}$-subset of values of $t\in \mathcal A$, all eigenvalues of $H_{\bze,t}$ are isolated. As a consequence, there are infinitely many gaps of second kind in \eqref{eq_gap_kinds}. Each of these gaps is also a gap in $\Sigma_{\bze}\cup\Sigma^{\perp}_{\bze}$.
\end{enumerate}
\end{lem}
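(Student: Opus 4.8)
The plan is to run the argument of \cite[Section~2]{Cantor} almost verbatim, with two substitutions: Theorem~\ref{th_main} of the present paper, applied to the monotone functions $f_t$ for $t\in\mathcal A$, plays the role of the one-dimensional localization input used in \cite{Cantor}, while monotonicity of $E(\cdot)$ from property (conv6) of Theorem~\ref{th_convergence} and Proposition~\ref{prop_ids} plays the role of the corresponding monotonicity statement. I work with the rank-one family $H_{\bze,t}$, its resolvent function $F(\la):=\langle(H_{\bze}-\la)^{-1}e_{\bze},e_{\bze}\rangle$, and the orthogonal splitting $H_t(0)=H_{\bze,t}\oplus H^{\perp}_{\bze}$; recall that $e_{\bze}$ is cyclic for $H_{\bze}$ by the definition of $\ell^2_{\bze}(\Z^d)$, so the spectral measure of $e_{\bze}$ has full support $\Sigma_{\bze}$, and that $H^{\perp}_{\bze}$ is independent of $t$, with $\sigma(H^{\perp}_{\bze})=\Sigma^{\perp}_{\bze}\subset\sigma(H(0))$.

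For (1), the essential spectrum $\sigma_{\mathrm{ess}}(H_t(0))$ is independent of $t$ by rank-one stability, hence equals $\sigma_{\mathrm{ess}}(H(0))$; and $\sigma(H(0))=\Sigma_{\bze}\cup\Sigma^{\perp}_{\bze}=:\Sigma$ (a direct sum) coincides with the support of the integrated density of states, which is continuous (\cite{CS_ids}; note $\sigma(H(x))$ is $x$-independent for sawtooth-type $f$), hence has no isolated points, so $\sigma_{\mathrm{ess}}(H_t(0))=\Sigma$ for all $t\in\R$. For $t\in[f(0),f(1-0)]$ I use the form ordering $H_{f(0)}(0)\le H_t(0)\le H_{f(1-0)}(0)$: the two extreme operators correspond to the monotone sampling functions $f$ and its left-continuous analogue on $(0,1]$, so their spectra also equal $\Sigma$ (again as supports of continuous densities of states), and a standard min--max / eigenvalue-counting argument, as in \cite{Cantor}, squeezes $\sigma(H_t(0))$ between them. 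Part (2) is then soft: an isolated eigenvalue of $H_t(0)$ lies outside $\sigma_{\mathrm{ess}}(H_t(0))=\Sigma\supseteq\Sigma^{\perp}_{\bze}$, hence is not an eigenvalue of $H^{\perp}_{\bze}$, hence is an eigenvalue of $H_{\bze,t}$, and conversely (using, as in \cite{Cantor}, that gaps of $\Sigma_{\bze}$ are gaps of $\Sigma$); on any gap of $\Sigma_{\bze}$ such eigenvalues are exactly the solutions of $t=-1/F(\la)$ with $F$ real-analytic and $F'(\la)=\|(H_{\bze}-\la)^{-1}e_{\bze}\|^2>0$, so each is a strictly monotone --- hence non-constant --- real-analytic function of $t$; combined with (1) this forces $H_{\bze,t}$ to have no isolated eigenvalues for $t\in[f(0),f(1-0)]$, whence $\sigma_{\mathrm{ess}}(H_{\bze,t})=\Sigma_{\bze}$.

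Part (3) reduces to (1): for a gap $G$ of $\Sigma_{\bze}$, $F$ is an increasing real-analytic bijection of $G$ onto an interval $F(G)$, so $A_G:=\{t\in\rbar:H_{\bze,t}\text{ has an eigenvalue in }G\}$ is the image of $F(G)$ under the circle homeomorphism $s\mapsto -1/s$, a nonempty arc not containing $0$, and $t\mapsto(\text{that eigenvalue})$ is a homeomorphism $A_G\to G$; since $A_G\neq\varnothing$ the first alternative of \eqref{eq_gap_kinds} never occurs in our setting, while by (1) one has $A_G\cap(f(0),f(1-0))=\varnothing$, so $A_G\subset\mathcal A$ and the homeomorphism sweeps all of $G$ as $t$ runs over $A_G\subset\mathcal A$ --- the second alternative. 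For (4) I first show $\Sigma_{\bze}$ is a Cantor set. It is perfect, since an isolated point of $\Sigma_{\bze}$ would be an isolated eigenvalue of $H_{\bze}=H_{\bze,0}$, hence by (2) of $H(0)$, contradicting perfectness of $\sigma(H(0))$. And it has empty interior: an interval $I\subset\Sigma_{\bze}$ would mean $H_{\bze}$ has pure point spectrum densely filling $I$, and then the del Rio--Gordon--Makarov--Simon theorem (applied as in the proof of \cite[Theorem~2.9]{Cantor}) produces a dense $G_{\delta}$ of coupling constants $t$ --- which, having nonempty intersection with the interior of $\mathcal A$, contains some $t_0\in\mathcal A$ --- for which $H_{\bze,t}$ has purely singular continuous spectrum in $I$; this contradicts Theorem~\ref{th_main}, by which $H_{\bze,t_0}$ (a direct summand of $H_{t_0}(0)$) has pure point spectrum.

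For the rest of (4): a Cantor set has infinitely many gaps, and by Part (3) all of them are of the second kind, so there are infinitely many gaps of the second kind. For each such gap $G$ and a topologically generic $t\in\mathcal A$, $H_{\bze,t}$ has an isolated eigenvalue in $G$ (the del Rio--Gordon--Makarov--Simon genericity ensuring, as in \cite{Cantor}, that these eigenvalues do not degenerate or reach the gap endpoints), so by Part (2) this is an isolated eigenvalue of $H_t(0)$; the gap-filling theorem \cite[Theorem~1.2]{Cantor} together with simplicity of $\sigma(H(x))$ (Corollary~\ref{cor_localization}) then shows that $G$ is a genuine gap of $\sigma(H(x))=\Sigma$, which in particular proves Theorem~\ref{th_gaps}. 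I expect the only real difficulty to be this last step --- adapting the del Rio--Gordon--Makarov--Simon machinery and the gap-filling theorem of \cite{Cantor} from the one-dimensional setting to the present one, in particular keeping the coupling parameter confined to the closed arc $\mathcal A$ (using only that $\mathcal A$ has nonempty interior) and feeding in Theorem~\ref{th_main} as the source of pure point spectrum; everything else is soft and follows \cite{Cantor} essentially line by line.
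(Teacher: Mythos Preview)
Your outline follows the same route as the paper --- rank-one perturbation theory, gap filling, and the del~Rio--Gordon--Makarov--Simon theorem, with Theorem~\ref{th_main} supplying pure point spectrum and simplicity for $t\in\mathcal A$ --- and in places (your explicit treatment of (3) via the Herglotz function $F$, and the separate argument that $\Sigma_{\bze}$ is Cantor) you are more detailed than the paper, which for (1), (3), and most of (4) simply cites \cite{Cantor} and \cite{Simon}.

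There is, however, one genuine circularity. In your (2), the forward implication ``isolated eigenvalue of $H_{\bze,t}\Rightarrow$ isolated eigenvalue of $H_t(0)$'' appeals to ``gaps of $\Sigma_{\bze}$ are gaps of $\Sigma$'', which you then derive in (4) \emph{using} (2). The paper breaks this loop by a direct argument in (2) based on simplicity of the spectrum: the isolated eigenvalue $\lambda(t)$ of $H_{\bze,t}$ is, by Feynman--Hellmann (equivalently, your $F'>0$), a non-constant real-analytic function, so its range over a small parameter interval inside $\mathcal A$ is an open interval $I\subset G$; if $I$ met $\Sigma_{\bze}^{\perp}$, then since $H_{\bze}^{\perp}$ has pure point spectrum (as a direct summand of $H_t(0)$, $t\in\mathcal A$) the interval $I$ would contain an actual eigenvalue $\mu$ of $H_{\bze}^{\perp}$, and at the parameter $t_1\in\mathcal A$ with $\lambda(t_1)=\mu$ one obtains a double eigenvalue of $H_{t_1}(0)$, contradicting Corollary~\ref{cor_localization}. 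This shows directly that isolated eigenvalues of $H_{\bze,t}$ avoid $\Sigma_{\bze}^{\perp}$, hence are isolated in $H_t(0)$, without assuming $\Sigma_{\bze}^{\perp}\subset\Sigma_{\bze}$ in advance. You already invoke simplicity at the end of (4); the fix is just to deploy it earlier, in (2), exactly as the paper does.
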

\begin{proof}
For sawtooth-type potentials, the first claim in (1) is, essentially, well-known, (see \cite[proof of Lemma 2.8]{Cantor}; it is easy to see that it works for all monotone potentials, not just Lipschitz monotone). The second claim in (1) follows from gap filling, see \cite[Theorem 1.2]{Cantor}, see also \cite[Section 4.1]{Cantor} and Chapter 17 of \cite{Simon}. Note that gap filling implies that eigenvalues in gaps of $\sigma_{\mathrm{ess}}(H(t))$ can only appear for $t\in \mathcal A$.

In order to prove (2), note that, for $t\in \mathcal A$, the spectra of $H_{\bze,t_0}$ and $H_t(0)$ are simple. 
If $H_{\bze,t_0}$ has an isolated eigenvalue $\lambda$, it extends to a real analytic function $\lambda(t)$ defined in a small neighborhood of $t_0$, and there also exists an analytic branch $\psi(t)$ of the corresponding normalized eigenvector \cite{Kato}. The fact that $\lambda(t)$ is not constant, follows from the well-known Feynman--Hellman formula:
$$
\frac{d}{dt}\lambda(t)=|\psi(t;\bze)|^2
$$
and the fact that the eigenvectors of $H_{\bze,t}$ cannot vanish at $\bze$. The range of $\lambda(t)$ cannot intersect $\sigma(H_{\bze}^{\perp})$, since it would contradict simplicity of spectra. As a consequence, $\lambda(t)$ will be an isolated eigenvalue of $H_t(0)$. Conversely, if $\lambda$ is an isolated eigenvalue of $H_t(0)$, one can also find $\lambda(t)$ and $\psi(t)$ as above. If $\psi(t;\bze)=0$, then $\lambda(t)$ is an isolated eigenvalue of $H_t(0)$ for all values of $t$, which contradicts (1). If $\psi(t;\bze)\neq 0$, then $\psi(t)$ is also an eigenvector of $H_{\bze,t}$, and the claim follows.

Part (3) follows from the general gap filling theory for rank one perturbations, see (as mentioned before) \cite[Section 4.1]{Cantor} and Chapter 17 of \cite{Simon}.

Part (4) is, essentially, the del Rio--Gordon--Makarov--Simon theorem, see \cite{drms,G1,G2}. From Theorem \ref{th_main}, we have that $\sigma(H_{\bze,t})$ is pure point for $t\in \mathcal A$. Therefore, for a dense $G_{\delta}$-subset of these $t$, $H_{\bze,t}$ has infinitely many isolated eigenvalues, each of which must be contained in a distinct gap.
\end{proof}
\begin{rem}
\label{rem_continuity}
Suppose that $f$ satisfies the assumptions of Theorem \ref{th_main}. Following \cite{Cantor}, we will say that $f$ is {\it Maryland-type} if $f|_{[0,1)}$ extends to a continuous map between $[0,1)$ and $[-\infty,+\infty)$, where both sets are identified with circles in the natural topology. It was shown in \cite{Ilya,Cantor} that operators \eqref{eq_h_def} with Maryland-type potentials do not have gaps in their spectra. 

In Theorem \ref{th_gaps}, we assume for simplicity that $f$ is sawtooth-type (that is, has no discontinuities on $(0,1)$), since we are using the gap filling result \cite[Theorem 1.2]{Cantor} which is only stated for potentials of this kind. By considering an appropriate modification of \cite[Theorem 1.2]{Cantor}, one can extend the conclusion of Theorem \ref{th_gaps} to all potentials within the class considered in Theorem \ref{th_main} that are not Maryland-type.
\end{rem}
\end{document}